\numberwithin{equation}{section}
\newtheorem{thm}{Theorem}
\newtheorem{lem}[thm]{Lemma}
\newtheorem{prop}[thm]{Proposition}
\newtheorem{definition}[thm]{Definition}
\newtheorem{rem}[thm]{Remark}
\DeclareMathOperator*{\argmin}{arg\,min} 
\DeclareMathOperator*{\esslim}{ess\,lim} 
\begin{document}
\title{The fourth-order total variation flow in $\mathbb{R}^n$\\
\vspace{7mm}
\small{Dedicated to Professor Neil Trudinger on the occasion of his 80th birthday}}
\author{Yoshikazu Giga\footnote{The work of the first author was partly supported by the Japan Society for the Promotion of Science (JSPS) through the grants Kakenhi: No.\,19H00639, No.\,18H05323, No.\,17H01091, and by Arithmer Inc.\ and Daikin Industries, Ltd.\ through collaborative grants.}, 
Hirotoshi Kuroda\footnote{The work of the second author was partly supported by JSPS through the grant Kakenhi No.\,18H05323.} and Micha{\l} {\L}asica\footnote{This work was created during the last author's JSPS Postdoctoral Research Fellowship at the University of Tokyo. The last author was partly supported by the Kakenhi Grant-in-Aid No.\,21F20811.}}
\date{}

\maketitle
\thispagestyle{empty}

\abstract{
		We define rigorously a solution to the fourth-order total variation flow equation in $\mathbb{R}^n$. If $n\geq3$, it can be understood as a gradient flow of the total variation energy in $D^{-1}$, the dual space of $D^1_0$, which is the completion of the space of compactly supported smooth functions in the Dirichlet norm. However, in the low dimensional case $n\leq2$, the space $D^{-1}$ does not contain characteristic functions of sets of positive measure, so we extend the notion of solution to a larger space. We characterize the solution in terms of what is called the Cahn-Hoffman vector field, based on a duality argument. This argument relies on an approximation lemma which itself is interesting.
		
		We introduce a notion of calibrability of a set in our fourth-order setting. This notion is related to whether a characteristic function preserves its form throughout the evolution. It turns out that all balls are calibrable. However, unlike in the second-order total variation flow, the outside of a ball is calibrable if and only if $n\neq2$. If $n\neq2$, all annuli are calibrable, while in the case $n=2$, if an annulus is too thick, it is not calibrable.
		
		We compute explicitly the solution emanating from the characteristic function of a ball. We also provide a description of the solution emanating from any piecewise constant, radially symmetric datum in terms of a system of ODEs.}

\vspace{17pt}
\noindent
\small{2020 Mathematics Subject Classification, Primary 35K67 (Singular parabolic equations), Secondary 35K25 (Higher-order parabolic equations), 47J35 (Nonlinear evolution equations).\\
Keywords: fourth-order; total variation flow; calibrability; subdifferential; radial solution.}


\section{Introduction} \label{S1} 

We consider the fourth-order total variation flow equation in $\mathbb{R}^n$ of the form
	\begin{equation} \label{E1}
		u_t = -\Delta \operatorname{div} \frac{\nabla u}{|\nabla u|}.
	\end{equation}
We aim to give explicit description of its solutions emanating from piecewise constant radial data. However, it turns out that the definition of a solution is itself non-trivial since $-\Delta$ does not have a bounded inverse on $L^2(\mathbb{R}^n)$. Our first goal is thus to provide a rigorous definition of a solution. Our second goal is to find explicit formula for the solution to \eqref{E1} when the initial datum $u(0,x)=u_0(x)$ is the characteristic function of a ball or an annulus.
	In other words,
	\[
	u_0 = a_0 \mathbf{1}_{B_{R_0}} \quad\text{or}\quad
	u_0 = a_0 \mathbf{1}_{A^{R^1_0}_{R^0_0}} \quad
	a_0 \in \mathbb{R},
	\]
	where $\mathbf{1}_K$ is the characteristic function of a set $K\subset\mathbb{R}^n$, i.\,e.,
	\begin{equation*}
		\mathbf{1}_K(x)= \left \{
		\begin{array}{ll}
			1,& x \in K \\
			0,& x\in\mathbb{R}^n \backslash K.
		\end{array}
		\right.
	\end{equation*}
Here $B_R$ denotes the open ball of radius $R$ centered at $0\in\mathbb{R}^n$ and $A^{R^1}_{R^0}$ denotes the annulus defined by $A^{R^1}_{R^0}=B_{R_1}\setminus\overline{B_{R_0}}$.
Our major concern is whether or not the solution remains a characteristic function throughout the evolution.
For example, in the case $u_0=a_0\mathbf{1}_{B_{R_0}}$, whether or not the solution $u$ of \eqref{E1} is of the form
\[
	u(t,x) = a(t) \mathbf{1}_{B_{R(t)}}
\]
with a function $a=a(t)$. In other words, we are asking whether the speed $u_t$ on the ball $B_{R(t)}$ and on its complement are constant in the spatial variable. As in the second-order problem \cite{ACM} (see also \cite{GP}), this leads to the notion of calibrability of a set. In the case of the second-order problem $u_t=\operatorname{div}\left(\nabla u/|\nabla u|\right)$, a ball and its complement are always calibrable and $R(t)\equiv R_0$, i.\,e.\ the ball does not expand nor shrink \cite{ACM}. In our problem, $R(t)$ may be non-constant.

We first note that the definition of a solution itself is non-trivial.
The fourth-order total variation flow has been mainly studied in the periodic setting \cite{GK}, \cite{GG} or in a bounded domain with some boundary conditions \cite{GKM}.
Formally, it is a gradient flow of the total variation functional
\[
TV(u) := \int_\Omega | \nabla u |
\]
with respect to the inner product
\[
(u,v)_{-1} = \int_\Omega u (-\Delta)^{-1} v
\]
when $\Omega$ is a domain in $\mathbb{R}^n$ or a flat torus $\mathbb{T}^n$. In the periodic setting, i.\,e.\ $\Omega=\mathbb{T}^n$ as in \cite{GK}, \cite{GG}, it is interpreted as a gradient flow in $H^{-1}_{av}$ which is the dual space of $H^1_{av}$, the space of average-free $H^1$ functions equipped with the inner product
\[
(u,v)_1 = \int_\Omega \nabla u \cdot \nabla v.
\] 
For the homogeneous Dirichlet boundary condition with bounded $\Omega$, $H^{-1}_{av}$ is replaced by $D^{-1}$, the dual space of $D^1_0=D^1_0(\Omega)$, which is the completion of $C^\infty_c(\Omega)$ in the norm associated with the inner product $(u,v)_1$; here $C^\infty_c(\Omega)$ denotes the space of all smooth functions compactly supported in $\Omega$. By the Poincar\'e inequality, both $H^1_{av}$ and $D^1_0(\Omega)$ can be regarded as subspaces of $L^2(\Omega)$. However, if $\Omega$ equals $\mathbb{R}^n$, the situation is more involved. If $n\geq3$, $D^1_0(\mathbb{R}^n)$ is continuously and densely embedded in $L^{2^*}(\mathbb{R}^n)$, where $p^*=np/(n-p)$ so that $2^* = 2n/(n-2)$, by the Sobolev inequality.
In fact,
\[
D^1_0(\mathbb{R}^n) = D^1(\mathbb{R}^n) \cap L^{2^*}(\mathbb{R}^n),\quad
D^1(\mathbb{R}^n)= \left\{ u \in L^1_{loc}(\mathbb{R}^n)  \bigm| \nabla u \in L^2(\mathbb{R}^n) \right\}
\]
see e.\,g.\ \cite{Gal}.
On the other hand, if $n\leq2$, $D^1_0$ is isometrically identified with the quotient space $\dot{D}^1(\mathbb{R}^n):=D^1(\mathbb{R}^n)/\mathbb{R}$, when $D^1(\mathbb{R}^n)$ is equipped with inner product $(u,v)_1$ \cite{Gal}.
Thus, we need to be careful when $n\leq2$ because an element of $D^1_0(\mathbb{R}^n)$ is determined only up to a constant. 
In any case, $D^1_0(\mathbb{R}^n)$ is a Hilbert space with the scalar product 
	\[\left(u,v\right)_{D^1_0(\mathbb{R}^n)} = \int_{\mathbb{R}^n} \nabla u \cdot \nabla v.\]
	Therefore, we can identify $D^1_0(\mathbb{R}^n)$ with its dual space by means of the isometry 
	\[ -\Delta \colon u \mapsto (u, \cdot)_{D^1_0(\mathbb{R}^n)}.\]
	On the other hand, let us define a subspace $\widetilde{D}^{-1}(\mathbb{R}^n) \subset D^1_0(\mathbb{R}^n)'$ by    
	\[\widetilde{D}^{-1}(\mathbb{R}^n) = \left\{w \mapsto \int_{\mathbb{R}^n} u w\colon  u \in C_c^\infty(\mathbb{R}^n) \right\} \quad \text{if } n \geq 3,\]
	\[\widetilde{D}^{-1}(\mathbb{R}^n) = \left\{w \mapsto \int_{\mathbb{R}^n} u w\colon u \in C_{c,av}^\infty(\mathbb{R}^n) \right\} \quad \text{if } n = 1 \text{ or } n=2,\]
	where 
	\[C_{c,av}^\infty(\mathbb{R}^n) = \left\{u \in C_c^\infty(\mathbb{R}^n) \colon \int_{\mathbb{R}^n} u = 0\right\}.\]   
	Then the closure $D^{-1}(\mathbb{R}^n)$ of $\widetilde{D}^{-1}(\mathbb{R}^n)$ coincides with $D^1_0(\mathbb{R}^n)'$ \cite{Gal}. Note that the restriction to $C_{c,av}^\infty(\mathbb{R}^n)$ in the definition of $\widetilde{D}^{-1}(\mathbb{R}^n)$ in $n=1,2$ is necessary for the functionals to be well-posed on $D^1(\mathbb{R}^n)/\mathbb{R}$. 
	In any case, since (by definition) the space of test functions $\mathcal D(\mathbb{R}^n)$ is continuously and densely embedded in $D_0^1(\mathbb{R}^n)$, we also have a continuous embedding $D^{-1}(\mathbb{R}^n) = D^1_0(\mathbb{R}^n)' \subset \mathcal D'(\mathbb{R}^n)$. Throughout the paper, we will often drop $(\mathbb{R}^n)$ in the notation for spaces of functions on $\mathbb{R}^n$, e.\,g.\ $D^{-1} = D^{-1}(\mathbb{R}^n)$.  

	In the first step, we give a rigorous definition of the total variation functional $TV$ on $D^{-1}$. Then we calculate the subdifferential of $TV$ in $D^{-1}$ space. Since it is a homogeneous functional, we are able to apply a duality method \cite{ACM} to characterize the subdifferential, provided that $TV$ is well approximated by nice functions in $D^{-1}$. We know that $C^\infty_{c,av}(\mathbb{R}^n)$ is dense in $D^{-1}$ for $n\leq2$; see e.\,g.\ \cite{Gal}.
	However, it is not immediately clear whether $TV$ is simultaneously approximable. Fortunately, it turns out that for any $w\in D^{-1}$, there is a sequence $w_k\in C^\infty_{c,av}(\mathbb{R}^n)$ which converges to $w$ in $D^{-1}$ and $TV(w_k)\to TV(w)$ as $k\to\infty$. This approximation part is relatively involved since we have to use an efficient cut-off function. Using the approximation, we are able to characterize the subdifferential $\partial_{D^{-1}}TV$ of $TV$ in $D^{-1}$ by adapting the argument in \cite{ACM}.
	Namely, we have
	\[
	\partial_{D^{-1}}TV(u) = \left\{ v = \Delta\operatorname{div}Z \bigm|
	Z \in L^\infty(\mathbb{R}^n),\ |Z| \leq 1,\ 
	-\langle u, \operatorname{div} Z \rangle = TV(u) \right\},
	\]
	where $\langle\ ,\ \rangle$ denotes the canonical paring of $D^{-1}$ and $D^1_0$. A vector field $Z$ corresponding to an element of the subdifferential is often called a \emph{Cahn-Hoffman vector field}.
	The equation \eqref{E1} should be interpreted as the gradient flow of $TV$ in $D^{-1}$, i.\,e.
	\begin{equation} \label{E2}
		u_t \in -\partial_{D^{-1}} TV(u),
	\end{equation}
	and its unique solvability for any initial datum $u_0\in D^{-1}$ is guaranteed by the classical theory of maximal monotone operators (\cite{Ko}, \cite{Br}). By our characterization of the subdifferential, we are able to give a more explicit definition of a solution which is consistent with that proposed in \cite{GKM}.
	Namely, for $u_0\in D^{-1}$ with $TV(u_0)<\infty$, a function $u\in C\left([0,T[,D^{-1}\right)$ is a solution to \eqref{E2} with $u(0)=u_0$ if and only if there exists $Z\in L^\infty\left(]0,T[\times\mathbb{R}^n\right)$ satisfying $\operatorname{div}Z \in L^2 \left(0,T; D^1_0(\mathbb{R}^n)\right)$ such that
	\begin{align}
		& u_t = -\Delta\operatorname{div}Z \quad\text{in}\quad D^{-1}(\mathbb{R}^n) \label{E4.0} \\
		& \left| Z (t,x) \right| \leq 1 \quad\text{for a.\,e. } x \in \mathbb{R}^n \label{E5.0} \\
		& \langle u, \operatorname{div}Z \rangle = -TV(u) \label{E6.0}
	\end{align}
	for a.\,e.\ $t\in]0,T[$.
	This is convenient for calculating explicit solutions.

	Unfortunately, in $n\leq2$, for a compactly supported square integrable function $u_0$, we know that $u_0\in D^{-1}$ if and only if $u_0$ is average-free, i.\,e.\ $\int_{\mathbb{R}^n}u_0=0$ (see Lemma \ref{4DS}). Thus, the characteristic function of any bounded, measurable set of positive measure does not belong to $D^{-1}$. We have to extend a class of initial data $u_0$ such that $u_0=\psi+w_0$ with $w_0\in D^{-1}$ while $\psi$ is a fixed compactly supported $L^2$ function. We consider a gradient flow $u_t\in-\partial_{D^{-1}}TV(u)$ in the affine space $\psi + D^{-1}$. Since $\partial_{D^{-1}}$ is a directional partial derivative in the direction of $D^{-1}$, it is more convenient to consider solutions to evolutionary variational inequality
	\begin{equation} \label{EVI} 
	\frac12 \frac{\mathrm{d}}{\mathrm{d}t} \left\|u(t)-g\right\|^2_{D^{-1}}
	\leq TV(g) - TV\left(u(t)\right) \quad \text{for a.\,e. }t>0
	\end{equation}
	for any $g \in \psi +D^{-1}$ \cite{AGS}. In the case $\psi=0$, it is easy to show that the evolutionary variational inequality is equivalent to \eqref{E2}. Indeed, by definition of the subdifferential, \eqref{E2} is equivalent to
	\[
	\left(-u_t, g-u(t)\right)_{D^{-1}}
	\leq TV(g) - TV\left(u(t)\right)
	\]
	for any $g\in D^{-1}$.
	The left-hand side equals $(\mathrm{d}/\mathrm{d} t)\left(\|u-g\|^2/2\right)$.
	Thus, the equivalence follows if $\psi=0$.
	From now on we assume that $\int_{\mathbb{R}^n}\psi \neq0$. 
	
	It is easy to check that there is at most one solution to the evolutionary variational inequality \eqref{EVI}.
	The solution $u$ is constructed by solving
	\[
	w_t \in -\partial_{D^{-1}}TV(w+\psi) \quad\text{with}\quad
	w(0) = w_0 = u_0 - \psi
	\]
	and setting $u=w+\psi$. Characterization of the (directional) subdifferential is more involved since $w\mapsto TV(w+\psi)$ is no more positively one-homogeneous.
	We identify the one dimensional space $\{c\psi|c\in\mathbb{R}\}$ with $\mathbb{R}$ and consider the Hilbert space $E^{-1}$ defined as the orthogonal sum $D^{-1}\oplus\mathbb{R}$. We calculate the subdifferential by the duality method since $TV$ is now positively one-homogeneous on $E^{-1}$.
	We then project this subdifferential onto $D^{-1}$ to get a characterization of a (directional) subdifferential $\partial_{D^{-1}}TV$.
	We end up with a characterization of solution to \eqref{E2} similar to \eqref{E4.0}--\eqref{E6.0}, with \eqref{E6.0} adjusted in a suitable way. If we also denote $E_0^1 = D^1$ in $n \leq 2$, $E_0^1 = D_0^1$, $E^{-1} = D^{-1}$ in $n \geq3$ and 
	\begin{equation} 
		\langle u, v \rangle_E = \left\{\begin{array}{l} 
			\langle u, v\rangle \text{ if } n \geq 3,  \\
			\langle w, [v]\rangle + c \int \psi v, \text{ where } u = w + c \psi, \ w \in D^{-1} \text{ if } n \leq 2,   
		\end{array}\right.
	\end{equation} 
	for $u \in E^{-1}$, $v \in E^1_0$, we end up with the following definition of solution 
	\begin{definition} \label{DEF0}
		Assume that $u_0 \in E^{-1}$.
		We say that $u\in C([0,\infty[, E^{-1})$ with $u_t\in L^2_{loc}(]0,\infty[, D^{-1})$ is a solution to \eqref{E1} with initial datum $u_0$ if there exists $Z \in L^\infty(]0,\infty[\times \mathbb{R}^n)$ with $\operatorname{div} Z(t,\cdot) \in E_0^1$ for a.\,e.\ $t>0$ such that
	\begin{align}
	& u_t = -\Delta\operatorname{div}Z \quad\text{in}\quad D^{-1}(\mathbb{R}^n) \label{E4} \\
	& \left| Z (t,x) \right| \leq 1 \quad\text{for a.\,e. } x \in \mathbb{R}^n \label{E5} \\
	& - \langle u, \operatorname{div}Z \rangle_E = TV(u) \label{E6}
	\end{align}
	for a.\,e.\ $t>0$. 
	\end{definition} \noindent
	and associated well-posedness result 
	\begin{thm} \label{MAIN0} 
		Let $u_0 \in E^{-1}$. There exists a unique solution to \eqref{E1} with initial datum $u_0$. 
	\end{thm}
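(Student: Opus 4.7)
The strategy is to recognize \eqref{E1} as the gradient flow of a convex, lower semicontinuous functional on a Hilbert space, apply the Komura-Brezis theorem for maximal monotone operators, and then identify the resulting abstract solution with the Cahn-Hoffman formulation of Definition \ref{DEF0}.

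In dimensions $n \geq 3$ the situation is clean: $E^{-1} = D^{-1}$, there is no shift by $\psi$, and $TV$ is convex, proper and lower semicontinuous on $D^{-1}$ (lower semicontinuity can be read off the duality representation $TV(u) = \sup\{-\langle u, \operatorname{div}\varphi\rangle : \varphi \in C_c^1(\mathbb{R}^n;\mathbb{R}^n),\ |\varphi| \leq 1\}$). Hence $\partial_{D^{-1}}TV$ is maximal monotone. Since $C_c^\infty$ lies in the effective domain of $TV$ and is dense in $D^{-1}$, the Komura-Brezis theorem produces, for every $u_0 \in D^{-1}$, a unique $u \in C([0,\infty[, D^{-1})$ with $u_t \in L^2_{loc}(]0,\infty[, D^{-1})$ solving $u_t \in -\partial_{D^{-1}}TV(u)$, $u(0) = u_0$. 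The equivalence of this inclusion with \eqref{E4}--\eqref{E6} is exactly the Cahn-Hoffman characterization of the subdifferential announced in the introduction.

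For $n \leq 2$, decompose $u_0 = w_0 + c_0\psi$ according to $E^{-1} = D^{-1} \oplus \mathbb{R}\psi$ and introduce the shifted functional $F(w) := TV(w + c_0\psi)$ on $D^{-1}$. Translation by $c_0\psi$ preserves convexity, properness, and lower semicontinuity, so $\partial_{D^{-1}}F$ is again maximal monotone. The Komura-Brezis theorem gives a unique $w \in C([0,\infty[, D^{-1})$ with $w_t \in L^2_{loc}(]0,\infty[, D^{-1})$, $w(0) = w_0$, and $w_t \in -\partial_{D^{-1}}F(w)$; setting $u(t) := w(t) + c_0\psi$ provides the desired $u$. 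By the definition of the subdifferential, combined with the identity $(u_t, u-g)_{D^{-1}} = \tfrac{d}{dt}\tfrac12\|u - g\|_{D^{-1}}^2$ for fixed $g$, this abstract inclusion is equivalent to the evolutionary variational inequality \eqref{EVI} tested against all $g$ in the same affine subspace as $u$. Uniqueness comes for free from monotonicity: $\tfrac{d}{dt}\tfrac12\|u^1 - u^2\|_{D^{-1}}^2 \leq 0$ for any two solutions, so $u^1(0) = u^2(0)$ forces $u^1 \equiv u^2$.

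The main obstacle, and the reason for the preparation in the introduction, is to match the abstract inclusion with the Cahn-Hoffman conditions \eqref{E4}--\eqref{E6}. Since $F$ is not positively 1-homogeneous on $D^{-1}$, the duality computation of \cite{ACM} does not apply directly. The idea is to lift to the Hilbert space $E^{-1} = D^{-1} \oplus \mathbb{R}\psi$, on which $TV$ recovers positive 1-homogeneity. Combining the approximation lemma (which supplies sequences $u_k \in C_{c,av}^\infty$ with $u_k \to u$ in $D^{-1}$ and $TV(u_k) \to TV(u)$) with the duality argument of \cite{ACM} then yields a Cahn-Hoffman description: an element $v$ belongs to $\partial_{E^{-1}}TV(u)$ iff $v = -\Delta\operatorname{div} Z$ for some $Z \in L^\infty$ with $|Z| \leq 1$, $\operatorname{div} Z \in E^1_0$, and $-\langle u, \operatorname{div} Z\rangle_E = TV(u)$. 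Projecting onto the $D^{-1}$-factor recovers $\partial_{D^{-1}}F$ and delivers exactly \eqref{E4}--\eqref{E6}, completing the identification of the abstract semigroup solution with Definition \ref{DEF0}.
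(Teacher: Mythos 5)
Your proposal reproduces the strategy of the paper—Kōmura--Br\'ezis for the shifted functional $F(w)=TV(w+c_0\psi)$ on $D^{-1}$, lifting to $E^{-1}=D^{-1}\oplus\mathbb{R}\psi$ to restore positive $1$-homogeneity, the approximation lemma feeding the duality computation of \cite{ACM}, and projection back onto the $D^{-1}$-factor. Those are exactly Theorems \ref{4UEH}, \ref{4HP}, \ref{4HSS}, \ref{4HPS}. However, there is a genuine gap that is, in fact, the entire content of the paper's actual proof of Theorem \ref{MAIN0}.

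The abstract theory plus the subdifferential characterization gives, for each fixed time $t$ in a set of full measure, \emph{some} vector field $Z(t,\cdot)\in L^\infty(\mathbb{R}^n)$ with $|Z(t,\cdot)|\leq 1$, $\operatorname{div}Z(t,\cdot)\in E^1_0$, $-\Delta\operatorname{div}Z(t,\cdot)=u_t(t)$, and $-\langle u(t),\operatorname{div}Z(t,\cdot)\rangle_E=TV(u(t))$. But Definition \ref{DEF0} demands $Z\in L^\infty(]0,\infty[\times\mathbb{R}^n)$, i.e.\ joint Lebesgue measurability in $(t,x)$. Because the Cahn--Hoffman field associated to a given $v\in\partial_{D^{-1}}F(w)$ is in general non-unique, a pointwise-in-time selection has no reason to be jointly measurable, and you cannot simply declare the equivalence ``announced in the introduction'' to close this. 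The paper addresses this by a Lebesgue-point argument (Lemma \ref{riemann}): one approximates $u_t$ in $L^1_{loc}(]0,\infty[,D^{-1})$ by piecewise constant functions built on a countable cover of $]0,\infty[$ by closed intervals centered at Lebesgue points, picks a fixed $Z_k$ on each interval, extracts a weak-$*$ limit $Z$ in $L^\infty(]0,\infty[\times\mathbb{R}^n)$, and verifies that the limiting $Z$ satisfies \eqref{E4}--\eqref{E6} for a.\,e.\ $t$. In $n\leq 2$ there is a further wrinkle you would need to handle: passing to the limit in $-\langle u^\varepsilon,\operatorname{div}Z^\varepsilon\rangle_E=TV(u^\varepsilon)$ is delicate because convergence of $v^\varepsilon=-\Delta\operatorname{div}Z^\varepsilon$ in $D^{-1}$ only controls the class $[\operatorname{div}Z^\varepsilon]$ modulo constants, not $\operatorname{div}Z^\varepsilon$ itself. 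The paper fixes this by choosing $\psi\in C^1_c$ and rewriting $\int\psi\operatorname{div}Z^\varepsilon=-\int\nabla\psi\cdot Z^\varepsilon$, so that the missing piece is controlled by weak-$*$ convergence of $Z^\varepsilon$ itself. Your proposal does not mention either the measurable selection problem or this pairing subtlety, so it establishes existence and uniqueness of the abstract semigroup trajectory (and of the EVI solution), but not existence of a solution in the sense of Definition \ref{DEF0}, which is what the theorem asserts.
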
 

	Our next problem is whether or not the speed of a characteristic function of a set is spatially constant inside and outside of the set.
	By the general theory (\cite{Br}, \cite{Ko}), the speed is determined by the minimal section (canonical restriction) $\partial^0_{D^{-1}}TV$ of $\partial_{D^{-1}}TV$.
	In other words, $\partial^0_{D^{-1}}TV(u)=v_0$ minimizes $\|v\|_{D^{-1}}$ for $v\in\partial_{D^{-1}}TV(u)$, i.\,e.,
	\[
	\partial^0_{D^{-1}}TV(u) := \argmin\left\{\|v\|_{D^{-1}}
	\bigm| v \in \partial_{D^{-1}} TV(u) \right\}.
	\]
	To motivate the notion of calibrability, we consider a smooth function $u$ such that
	\[
	\overline{U} = \left\{ x \in \mathbb{R}^n
	\bigm| u(x) = 0 \right\},
	\]
	where $U$ is a smooth open set.
	Outside $\overline{U}$, we assume that $\nabla u\neq0$.
	To fix the idea, we assume that $\partial U$ has negative signature (orientation) in the sense that $u<0$ outside $\overline{U}$.
	By our specification of $u$, we see that
	\[
	\partial^0_{D^{-1}}TV(u) = \argmin\left\{\|\operatorname{div}Z\|_{D^1_0}
	\Bigm| |Z| \leq 1\ \text{in}\ U,\ 
	Z = \nabla u/|\nabla u|\ \text{in}\ \overline{U}^c,\ 
	\operatorname{div}Z \in D^1_0 \right\}.
	\]
	Since $\operatorname{div}Z$ is locally integrable, $Z\cdot\nu$ does not jump across $\partial U$, where $\nu$ is the exterior unit normal of $\partial U$.
	In this case,
	\begin{equation} \label{E7}
		Z\cdot\nu = Z\cdot\nabla u/|\nabla u| = -1
		\quad\text{on } \partial U.
	\end{equation}
	Since $\nabla\operatorname{div}Z$ does not have a singular part, $\operatorname{div}Z$ does not jump across $\partial U$.
	In this case,
	\begin{equation} \label{E8}
		\operatorname{div}Z = -\operatorname{div}\nu
		\quad\text{on } \partial U.
	\end{equation}
	However, $v=\Delta\operatorname{div}Z$ may have a non-zero singular part concentrated on $\partial U$ even if $v=v_0$, i.\,e., $v$ is the minimizer.
	This phenomenon is observed in \cite{Ka1}, \cite{Ka2}, \cite{GG} in a one-dimensional periodic setting.
	Different from the second-order problem, this causes expansion or shrinking of the ball when the solution $u$ is of the form $u(t,x)=a(t)\mathbf{1}_{B_{R(t)}}(x)$.
	If $u>0$ outside $U$, the minus in \eqref{E7} \eqref{E8} should be replaced by the plus.

	If $\partial^0_{D^{-1}}TV(u)$ is constant on $B_{R(t)}$ and $\left(\overline{B_{R(t)}}\right)^c$, this property is preserved under the evolution, which leads us to definition of calibrability.
	Note that the value of $\partial^0_{D^{-1}}TV(u)$ on $U$ is determined by $U$ and its signature does not depend on particular value of $u$. We say that $U$ (with negative signature) is \emph{calibrable} if there exists $Z_0 \in L^\infty(U, \mathbb{R}^n)$ such that $\nabla \operatorname{div} Z_0 \in L^2(U, \mathbb{R}^n)$, $Z_0$ satisfies \eqref{E7}, \eqref{E8}, $|Z_0|\leq 1$ a.\,e.\ in $U$ and $\Delta\operatorname{div} Z_0$ is constant on $U$. We call any such $Z_0$ a \emph{calibration} for $U$.
	
	Recall that in the case of the second-order problem, we say that $U$ (with negative signature) is calibrable if there exists $\widetilde{Z}_0 \in L^\infty(U, \mathbb{R}^n)$ satisfying \eqref{E7}, $|\widetilde{Z}_0|\leq 1$ a.\,e.\ in $U$ and $-\operatorname{div}\widetilde{Z}_0$ is a  constant function on $U$. This is formally equivalent to \emph{$-$calibrability} in \cite{BCN}, \cite{ACM}. It can be shown that $\widetilde{Z}_0$ is a calibration for $U$ if and only if
	\[
	\widetilde{Z}_0 \in \argmin \left\{ \|\operatorname{div}Z\|_{L^2(U)} \bigm|
	z\ \text{satisfies \eqref{E7} and}\ |Z|\leq 1 \text{ a.\,e.\ in } U \right\}
	\]
	and $-\operatorname{div}\widetilde{Z}_0$ is a  constant function on $U$, which is the definition of calibrability in \cite{GP}.

Going back to our fourth-order problem, if $Z_0$ is a calibration for $U$, then $w=\operatorname{div}Z_0$ must satisfy
	\begin{align}
		-\Delta w &= \lambda \quad\text{in } U  \label{E9} \\
		w &= -\operatorname{div}\nu \quad\text{on } \partial U \label{E10}
	\end{align}
	with some constant $\lambda$.
	If $U$ is bounded, $\lambda$ is determined by \eqref{E7} since
	\begin{equation} \label{E11}
		\int_U w \,\mathrm{d} \mathcal{L}^n = \int_U\operatorname{div} Z_1 \,\mathrm{d} \mathcal{L}^n
		= \int_{\partial U} Z_1\cdot\nu \,\mathrm{d}\mathcal{H}^{n-1} = -\mathcal{H}^{n-1}(\partial U),
	\end{equation}
	where $\mathcal{H}^{n-1}$ denotes the $n-1$ dimensional Hausdorff and $\mathcal{L}^n$ denotes the Lebesgue measure in $\mathbb{R}^n$.
	Using this fact, in section \ref{SCA} we prove that if $Z_0$ is a calibration for a bounded $U$, then 
	\begin{equation} \label{SZ0}
		Z_0 \in \argmin \left\{ \|\nabla\operatorname{div}Z\|_{L^2(U)} \bigm|
		Z\ \text{satisfies \eqref{E7}, \eqref{E8} and}\ |Z|\leq 1 \text{ a.\,e.\ in } U \right\}. 
	\end{equation}	
	Moreover, we obtain an "explicit" formula for the constant $\lambda$ in terms of the \emph{Saint-Venant problem} in $U$.

	In the radially symmetric setting, it is not difficult to show that $Z_0$ in \eqref{SZ0} can be chosen in the form $z\left(|x|\right)\frac{x}{|x|}$. Indeed, if $Z_0$ is belongs to the set of minimizers \eqref{SZ0}, then its rotational average $\overline{Z_0}$ belongs to \eqref{SZ0} as well, because averaging preserves \eqref{E7}, \eqref{E8} and the inequality $|Z|\leq1$. Since the angular part of $\overline{Z_0}$ does not contribute to the divergence, it is possible to delete this part (Lemma \ref{symmetry}).
 We thus conclude that there is an element of \eqref{SZ0} of form $Z(x)=z(|x|)\frac{x}{|x|}$. Thus, the equation \eqref{E9} can be written as the third-order ODE of the form
	\begin{equation} \label{E12}
		- r^{1-n} \left(r^{n-1} \left(r^{1-n} (r^{n-1}z)' \right)' \right)'=\lambda
	\end{equation}
	since $\operatorname{div}Z=r^{1-n}(r^{n-1}z)'$.
	If $U$ is $B_R$ with negative signature, condition \eqref{E7} implies
	\begin{equation} \label{E13}
		z(R) = -1.
	\end{equation}
	Since $\operatorname{div}Z=z'+(n-1)z/r$, condition \eqref{E8} implies that
	\begin{equation} \label{E14}
		z'(R)= 0.
	\end{equation}
	Solving \eqref{E12} under the assumption that $z$ is smooth near zero under conditions \eqref{E13}, \eqref{E14}, we eventually get a unique solution \eqref{E12}--\eqref{E14} of the form
	\[
	z(r) = \frac12 \left(\frac{r}{R}\right)^3 - \frac32\frac{r}{R}, \quad
	\lambda = -\frac{n(n+2)}{R^3}
	\]
	for all $n\geq1$. It is easy to see that $Z(x)=z(|x|)\frac{x}{|x|}$ satisfies the constraint $|Z|\leq1$ in $B_R$.
	We conclude that all balls are calibrable. More careful argument is necessary, but we are able to discuss calibrability of an annulus as well as a complement of a ball.
	\begin{thm} \label{MAIN1}
		\begin{enumerate}
			\item[(i)] All balls are calibrable for all $n\geq1$.
			\item[(i\hspace{-1.5pt}i)] All complement of balls are calibrable except $n=2$.
			\item[(i\hspace{-1.5pt}i\hspace{-1.5pt}i)] If $n=2$, all complement of balls are not calibrable.
			\item[(i\hspace{-1.5pt}v)] All annuli (with definite signature) are calibrable except in $n=2$.
			\item[(v)] For $n=2$, there is $Q_*>1$ such that an annulus (with definite signature) is calibrable if and only if the ratio of the exterior radius over the interior radius is smaller than or equal to $Q_*$.
			In other words, $A^{R_1}_{R_0}$ is calibrable if and only if $R_1/R_0\leq Q_*$.
		\end{enumerate}
	\end{thm}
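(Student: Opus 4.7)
By Lemma \ref{symmetry}, in every case it suffices to seek a radial calibration $Z_0(x) = z(|x|)\,x/|x|$ on $U$. Writing $w = \operatorname{div} Z_0 = z' + (n-1)z/r$, the problem reduces to the second-order radial ODE $-\Delta w = \lambda$ on the radial interval corresponding to $U$, together with the boundary data $z = \pm 1$ from \eqref{E7} and the Neumann-type data $z' = 0$ obtained by combining \eqref{E7}--\eqref{E8}. The constant $\lambda$ is a free parameter determined by the remaining geometric constraints; calibrability is then equivalent to the question whether the resulting $z$ satisfies $|z| \leq 1$ throughout the interval.

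\emph{(i)} The explicit formula $z(r) = \tfrac12 (r/R)^3 - \tfrac32 (r/R)$ displayed above the statement satisfies $-1 \leq z \leq 0$ on $[0, R]$.

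\emph{(ii), (iii).} On $U = B_R^c$ the integrability condition $\nabla w \in L^2(U)$ forces $\lambda = 0$: the radial primitive $w'(r) = -\lambda r/n + A r^{1-n}$ produces a $\lambda$-contribution with $|w'|^2 r^{n-1} \sim r^{n+1}$ at infinity, not integrable unless $\lambda = 0$. With $\lambda = 0$, the radial harmonic profile is $w = c_1 + c_2 r^{2-n}$ ($n \neq 2$) or $w = c_1 + c_2 \ln r$ ($n = 2$), and the boundary datum $w(R) = (n-1)/R$ must be compatible with $\nabla w \in L^2$ (and with $w \in L^{2^*}$ when $n \geq 3$). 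For $n \neq 2$ these conditions determine $(c_1, c_2)$ uniquely; integrating $(r^{n-1} z)' = r^{n-1} w$ with $z(R) = 1$ yields a closed-form $z$ monotonically decreasing from $1$ at $r = R$ to $0$ at infinity (the case $n = 1$ degenerates to $z \equiv 1$), proving (ii). For $n = 2$, $\nabla w \in L^2(B_R^c)$ forces $c_2 = 0$, but then the constant $w = c_1$ cannot match the nonzero boundary value $1/R$, giving (iii).

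\emph{(iv), (v).} On the bounded annulus $U = A^{R_1}_{R_0}$, the general radial solution of $-\Delta w = \lambda$ depends on three parameters $(\lambda, A, B)$, uniquely determined by the three linear conditions $w(R_0) = (n-1)/R_0$, $w(R_1) = -(n-1)/R_1$ and $z(R_1) = -1$ (with $z$ obtained by integrating the first-order ODE from $z(R_0) = 1$); the remaining Neumann conditions $z'(R_0) = z'(R_1) = 0$ then hold automatically. By scale invariance the problem reduces to the one-parameter family $z = z_Q$ on $[1, Q]$, $Q := R_1/R_0 > 1$. For $n \neq 2$, inspection of the explicit closed form of $z_Q$ yields $|z_Q| \leq 1$ for every $Q$, proving (iv). For $n = 2$, set $M(Q) := \max_{r \in [1, Q]} |z_Q(r)|$. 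Continuity of $M$ in $Q$, the fact that $M(Q) \leq 1$ for $Q$ close to $1$, and $M(Q) \to \infty$ as $Q \to \infty$ (the $\ln r$ contribution forces $z_Q$ to develop interior extrema of unbounded magnitude), together give
\[ Q_* := \sup\bigl\{Q > 1 : M(Q) \leq 1\bigr\} \in (1, \infty), \]
and calibrability of $A^{R_1}_{R_0}$ is equivalent to $R_1/R_0 \leq Q_*$.

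The principal obstacle is the sharp analysis underlying (v): showing that $\{Q > 1 : M(Q) \leq 1\}$ is a connected interval rather than a more complicated set, and confirming strict violation of the bound just beyond $Q_*$. The cleanest route is a direct study of the interior critical points of the explicit $z_Q$ as functions of the parameter $Q$, using implicit-function-theorem arguments to track their extremal values as $Q$ increases from $1^+$ toward $\infty$.
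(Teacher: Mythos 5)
Your reduction to radial profiles and the treatment of parts (i)--(iii) follow essentially the same lines as the paper: reduce via Lemma \ref{symmetry} to $Z=z(|x|)x/|x|$, solve the third-order ODE, use the integrability of $\nabla\operatorname{div}Z$ to kill the growing modes, and check $|z|\leq 1$. (Minor sign issue: with $\chi=1$ on $\partial U$ for $U=B_R^c$, the boundary data is $z(R)=-1$, not $z(R)=1$, because the exterior normal of $U$ is $-x/|x|$.) That part is fine.

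There is a genuine gap in (iv) and especially (v), where the proposal stops short of the argument that actually does the work. For (iv) you write that ``inspection of the explicit closed form of $z_Q$'' yields $|z_Q|\leq 1$ for every $Q>1$; this is not a proof. The paper instead observes that
\[
z''(r)=r^{-n-1}\bigl(6c_0\,r^{n+2}+(n-3)(n-2)c_1\,r^2+n(n-1)c_3\bigr)
\]
and that for $n\neq 2$ one can verify $c_0>0$, $c_1>0$, so the polynomial inside has at most one positive root; hence $z$ has at most one inflection point in $]R_0,R_1[$, which together with $z(R_0)=1$, $z(R_1)=-1$, $z'(R_0)=z'(R_1)=0$ rules out an interior local extremum, giving $|z|\leq 1$. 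Some such structural argument is needed and is absent from the proposal.

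For (v) you explicitly flag the connectedness of $\{Q>1:M(Q)\leq 1\}$ as the principal obstacle and sketch an implicit-function-theorem strategy, but that strategy is not carried out, and the key reduction is missing. The paper observes that for $n=2$, $z''$ has at most two positive roots and $z''(r)>0$ for large $r$ (since $c_0>0$); combined with the boundary data this shows $|z|\leq 1$ on $[R_0,R_1]$ if and only if $z''(R_0)\leq 0$. This single scalar criterion rewrites as $m(Q)\leq 0$ with
\[
m(Q)=\log Q-\frac{(Q^2-1)(2Q-1)}{Q(Q^2-2Q+3)},
\]
and the monotonicity structure of $m$ is read off directly from the explicit derivative
\[
m'(Q)=\frac{(Q-3)(Q-1)(Q+1)^3}{Q^2(Q^2-2Q+3)^2},
\]
so $m$ decreases on $]1,3[$, increases on $]3,\infty[$, $m(1)=0$, $m(Q)\to\infty$, hence there is a unique zero $Q_*>3$ and $m\leq 0$ precisely on $]1,Q_*]$. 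This replaces the topological argument you propose by a single explicit computation, which is both cleaner and actually closes the claim. Without some version of this (or a completed IFT tracking of the critical values), (v) is not proved.

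A smaller remark: you do not need to argue that the set of admissible $Q$ is connected as a separate step; once the calibrability criterion is reduced to the sign of a single scalar quantity like $z''(R_0)$, the ``interval'' structure comes for free from the monotonicity analysis of that scalar in $Q$.
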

	Theorem \ref{MAIN1}(v) is consistent with (i\hspace{-1pt}i\hspace{-1pt}i) since $R_1 \to\infty$ implies $A^{R_1}_{R_0}$ converges to $\overline{B_{R_0}}^c$, a complement of the closure of the ball $B_{R_0}$.
	Note that in the case of an annulus, there is a possibility we take a signature which is different on the exterior boundary $\partial B_{R_1}$ and the interior boundary $\partial B_{R_0}$. We also study such indefinite cases.

	We now calculate an explicit solution of \eqref{E1} starting from $u_0=a_0\mathbf{1}_{B_{R_0}}$.
	We first discuss the case $n\neq2$.
	Since a ball and its complement is calibrable, the solution is of the form
	\begin{equation} \label{E15}
		u(t,x) = a(t) \mathbf{1}_{B_{R(t)}}.
	\end{equation}
	We take the (radial) calibration $Z_{in}$ in $B_{R(t)}$ and $Z_{out}$ in $\mathbb{R}^n\backslash\overline{B_{R(t)}}$ and set
	\begin{equation*}
		Z(x,t) = \left \{
		\begin{array}{ll}
			Z_{in}(x), & x \in B_{R(t)} \\
			Z_{out}(x), & x \in \mathbb{R}^n\backslash\overline{B_{R(t)}}.
		\end{array}
		\right.
	\end{equation*}
	Here $Z_{out}(x)=z_{out}(|x|)\frac{x}{|x|}$ can be calculated as
	\[
	z_{out}(r) = - \frac{n-1}{2} \left(\frac{r}{R}\right)^3
	+ \frac{n-3}{2} \left(\frac{r}{R}\right)^{1-n}
	\]
	while, as we already discussed, $z_{in}$ for $Z_{in}(x)=z_{in}(|x|)\frac{x}{|x|}$ is of the form 
	\[
	z_{in}(r) = \frac{1}{2} \left(\frac{r}{R}\right)^3
	- \frac{3}{2} \frac{r}{R}.
	\]
	This $Z$ satisfies \eqref{E5} and \eqref{E6}, and moreover $\operatorname{div}Z\in D^1_0$ for any $t>0$.
	Moreover, $\operatorname{div}Z$ is continuous across $\partial B_{R(t)}$.
	However, $\nabla\operatorname{div}Z$ may jump across $\partial B_{R(t)}$.
	Actually,
	\[
	-\Delta\operatorname{div}Z = \lambda\mathbf{1}_{B_{R(t)}} + \nu\cdot(\nabla\operatorname{div}Z_{in}-\nabla\operatorname{div}Z_{out})\delta_{\partial B_{R(t)}},
	\]
	where $\delta_\Gamma(\varphi)=\int_\Gamma\varphi\,d\mathcal{H}^{n-1}$ or $\delta_\Gamma=\mathcal{H}^{n-1}\,\,\raisebox{-.127ex}{\reflectbox{\rotatebox[origin=br]{-90}{$\lnot$}}}\,\Gamma$ for a hypersurface $\Gamma$ and $\nu$ is the exterior unit normal of $\partial B_{R(t)}$, i.\,e., $\nu=x/R(t)$.
	Here $\lambda=-\frac{n(n+2)}{R^3}$.
	By a direct calculation, the quantity $\nu\cdot (\nabla\operatorname{div}Z_{in}-\nabla\operatorname{div}Z_{out})=-\frac{n(n-4)}{R^2}$.
	Since $u_t=-\Delta\operatorname{div}Z$, by 
	\[
	\partial_t\left(a\mathbf{1}_{B_R}\right)
	= \frac{\mathrm{d}a}{\mathrm{d}t}\mathbf{1}_{B_R}
	+ a \frac{\mathrm{d}R}{\mathrm{d}t}\delta_{\partial B_R},
	\]
	we conclude that
	\[
	\frac{\mathrm{d}a}{\mathrm{d}t} = - \frac{n(n+2)}{R^3}, \quad
	\frac{\mathrm{d}R}{\mathrm{d}t} = - \frac{n(n-4)}{aR^2}.
	\]
	Since
	\[
	\frac{\mathrm{d}}{\mathrm{d}t} (aR^3) = - n(n+2) - 3n(n-4) = -n(4n-10),
	\]
	an explicit form of a solution is given as
	\[
	a(t) = a_0 \left(1 - \frac{n(4n-10)}{a_0 R^3_0}t\right)^{\frac{n+2}{4n-10}}, \quad
	R(t) = R_0 \left(1 - \frac{n(4n-10)}{a_0 R^3_0}t\right)^{\frac{n-4}{4n-10}}.
	\]

	As noticed earlier, in the case $n=2$, the complement of the disk is not calibrable.
	If $u$ is a radially strictly decreasing function outside $B_R$, we expect $Z_{out}(x)=-x/|x|$ for $|x|>R(t)$.
	In \cite{GKM}, it is proposed that a solution $u$ to \eqref{E1} must satisfy
	\[
	u_t = -\Delta\operatorname{div}Z_{out}.
	\]
	Since $\operatorname{div}Z_{out}=-(n-1)/|x|^2$ and $\nabla\operatorname{div}Z_{out}=\frac{(n-1)x}{|x|^3}$, this implies
	\begin{equation} \label{E16}
		u_t(t,x) = -\frac{(n-1)(n-3)}{|x|^3}, \quad
		x \in \left(\overline{B_{R(t)}}\right)^c = \mathbb{R}^n\backslash\overline{B_{R(t)}}.
	\end{equation}
	In the case $n=2$, $\nabla\operatorname{div}Z_{out}\in L^2\left(\left(\overline{B_{R(t)}}\right)^c\right)$ so $Z_{out}$ is a Cahn-Hoffman vector field.

	If we start with $u_0=a_0\mathbf{1}_{B_{R_0}}$ with $a_0>0$ for $n=2$, the expected form of a solution is
	\begin{equation} \label{E17}
		u(t,x) = a(t)\mathbf{1}_{B_{R(t)}}(x) + \frac{t}{|x|^3}\mathbf{1}_{\overline{B_{R(t)}}^c}(x),
	\end{equation}
	where
	\begin{equation} \label{E18}
		\frac{\mathrm{d}a}{\mathrm{d}t} = -\frac{2\cdot 4}{R^3}, \quad
		\left( a(t) - \frac{t}{R(t)^3} \right) \frac{\mathrm{d}R}{\mathrm{d}t} = \frac{2\cdot 2}{R^2}.
	\end{equation}
	Analyzing this ODE system, we can deduce qualitative properties of the solution. Summing up our results yields
	\begin{thm} \label{MAIN2} 
		Let $u_0=a_0\mathbf{1}_{B_{R_0}}$ with $a_0>0$.
		
		If $n \geq 3$, then the solution $u$ to \eqref{E1} with initial datum $u_0$ is of the form
		\[
		u(t,x) = a(t)\mathbf{1}_{B_R(t)} \quad\text{for}\quad
		t < t_* = a_0 R^3_0\bigm/\left(n(4n-10)\right)
		\]
		and $u(t,x)\equiv0$ for $t\geq t_*$.
		(The time $t_*$ is called the extinction time.)
		Moreover, $a(t)$ is decreasing and $a(t)\to0$ as $t\uparrow t_*$.
		\begin{enumerate}
			\item[(i)] $R(t)$ is increasing and $R(t)\to\infty$ as $t\uparrow t_*$ for $n=3$.
			\item[(i\hspace{-1.5pt}i)] $R(t)=R_0$ for $n=4$.
			\item[(i\hspace{-1.5pt}i\hspace{-1.5pt}i)] $R(t)$ is decreasing and $R(t)\to0$ as $t\uparrow t_*$ for $n\geq5$.
		\end{enumerate}
		
		If $n=2$, then the solution is not a characteristic function for $t>0$.
		It is of the form \eqref{E16} and moves by \eqref{E18}.
		In particular, there is no extinction time, $R(t)$ is increasing and $a(t)$ is decreasing.
		Moreover, $R(t)\to\infty$ and $a(t)\to0$ as $t\to\infty$.
		The gap $a(t)-\frac{t}{R(t)^3}$ is always positive. 
		
		If $n=1$, then the solution is of the form $u(t,x)=a(t)\mathbf{1}_{B_R(t)}$ for $t>0$.
		Moreover, $R(t)$ is increasing and $a(t)$ is decreasing with $R(t)\to\infty$ and $a(t)\to0$ as $t\to\infty$.
	\end{thm}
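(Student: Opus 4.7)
The plan is, for each $n \geq 1$, to exhibit an explicit candidate pair $(u, Z)$ of the stated form, verify that it fulfills Definition \ref{DEF0}, and conclude by the uniqueness part of Theorem \ref{MAIN0}. Many of the building blocks---the radial vector fields $Z_{in}, Z_{out}$ on the two regions, and the matching conditions that produce the ODE system for $(a(t), R(t))$---are already identified in the paragraphs preceding the statement; what remains is to assemble them into a globally admissible Cahn-Hoffman field and to analyze the resulting dynamical system.

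For $n \neq 2$: By Theorem \ref{MAIN1}(i)--(ii) both $B_{R(t)}$ and its complement are calibrable, so we take $Z_{in}, Z_{out}$ to be the corresponding radial calibrations and glue $Z = Z_{in}\mathbf{1}_{B_{R(t)}} + Z_{out}\mathbf{1}_{\mathbb{R}^n\setminus\overline{B_{R(t)}}}$. The matching $z_{in}(R) = z_{out}(R) = -1$ and $\operatorname{div} Z_{in}(R) = \operatorname{div} Z_{out}(R) = -(n-1)/R$ guarantee $|Z|\leq 1$, continuity of $Z\cdot\nu$, and continuity of $\operatorname{div} Z$ across $\partial B_{R(t)}$, hence $\operatorname{div} Z\in E_0^1$. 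The pairing condition \eqref{E6} reduces, via \eqref{E11}, to $\langle u, \operatorname{div} Z\rangle_E = a\int_{B_R}\operatorname{div} Z_{in} = -a\,\mathcal{H}^{n-1}(\partial B_R) = -TV(u)$. Equating the bulk and singular (concentrated on $\partial B_{R(t)}$) parts of $u_t = -\Delta\operatorname{div} Z$ yields $\dot a = -n(n+2)/R^3$ and $\dot R = -n(n-4)/(aR^2)$, whence $(aR^3)' = -n(4n-10)$ integrates to the explicit closed-form expressions displayed in the text. For $n\geq 3$ the coefficient $n(4n-10)$ is positive, giving the extinction time $t_* = a_0R_0^3/(n(4n-10))$; the case distinction (i)--(iii) follows by inspection of the sign of the exponent $(n-4)/(4n-10)$ in $R(t)$. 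For $n=1$, $n(4n-10) = -6 < 0$, so the solution is global with $a(t) = a_0(1 + 6t/(a_0R_0^3))^{-1/2}$ and $R(t) = R_0(1 + 6t/(a_0R_0^3))^{1/2}$, manifestly satisfying the stated monotonicity and limits.

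For $n = 2$: Since $\overline{B_R}^c$ is not calibrable (Theorem \ref{MAIN1}(iii)), no pure characteristic-function ansatz is admissible. Setting $Z_{out}(x) = -x/|x|$ outside $B_{R(t)}$, we have $|Z_{out}|=1$ and $-\Delta\operatorname{div} Z_{out} = 1/|x|^3$, so integrating from $u_0\equiv 0$ on $\{|x|>R_0\}$ forces the exterior profile $u(t,x) = t/|x|^3$. Crucially, in dimension two $\nabla\operatorname{div} Z_{out} = x/|x|^3$ is $L^2$-integrable on any exterior region, securing $\operatorname{div} Z\in D_0^1(\mathbb{R}^2)$. Inside $B_{R(t)}$, where $u\equiv a(t)$, we pick $Z_{in}$ radial whose divergence solves the Saint-Venant-type problem \eqref{E9}--\eqref{E10} with boundary data inherited from $Z_{out}$; existence and the bound $|Z_{in}|\leq 1$ follow as in the ball-calibrability argument. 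Matching the regular and singular parts of $u_t = -\Delta\operatorname{div} Z$ across the moving interface yields the ODE system \eqref{E18}, and condition \eqref{E6} is verified by direct computation of $\langle u, \operatorname{div} Z\rangle_E$ and $TV(u)$ for the ansatz \eqref{E17}.

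The main obstacle is the qualitative analysis of \eqref{E18} in the $n=2$ case. It is essential to show that the gap $G(t) := a(t) - t/R(t)^3$ stays strictly positive for all $t>0$, because this is what keeps $u(t,\cdot)$ strictly radially decreasing across $\partial B_{R(t)}$ and legitimates the choice $Z_{out} = -x/|x|$; otherwise the sign condition $Z\cdot\nu = -1$ on $\partial B_{R(t)}$ would fail and $Z$ would cease to be a Cahn-Hoffman field. The natural route is to extract from \eqref{E18} a conservation law reflecting the structural constraint that $u(t) - \psi \in D^{-1}$ (i.e.\ the average-free condition in the $n=2$ sense), combine it with the monotonicity $\dot a < 0$ and, once $G>0$, $\dot R > 0$ that are immediate from \eqref{E18}, and then extract the long-time behavior $a(t)\to 0$, $R(t)\to\infty$ by a compactness argument on rescaled quantities. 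Once global well-posedness of \eqref{E18} with $G>0$ is secured, the pair $(u, Z)$ meets Definition \ref{DEF0}, and uniqueness via Theorem \ref{MAIN0} identifies it with the unique solution.
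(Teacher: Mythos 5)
Your overall strategy coincides with the paper's: build a radial candidate $(u,Z)$ by gluing $Z_{in}$ on the ball and $Z_{out}$ on its complement, read off the ODE system for $(a,R)$ from the regular and singular parts of $u_t=-\Delta\operatorname{div}Z$, and invoke uniqueness from Theorem \ref{MAIN0}. The cases $n\geq 3$ and $n=1$ are handled correctly, including the observation $(aR^3)'=-n(4n-10)$ and the resulting closed-form $a(t),R(t)$; the case distinction (i)--(iii) by the sign of $(n-4)/(4n-10)$ is right.

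For $n=2$, however, the argument has a genuine gap. You correctly identify that everything hinges on $G(t)=a(t)-t/R(t)^3>0$, but you propose to establish this via an unspecified ``conservation law reflecting the structural constraint that $u(t)-\psi\in D^{-1}$'' and to obtain the long-time behavior ``by a compactness argument on rescaled quantities.'' Neither step is carried out, and it is not evident what conservation law the average-free condition would produce, nor that a rescaling/compactness argument would by itself pin down $a(t)\to 0$ and $R(t)\to\infty$ (as opposed to, say, convergence to some nonzero equilibrium). In fact the system \eqref{E18} is exactly solvable, which is what the paper does: writing $g(t)=a(t)R(t)^3-t$, one computes directly from \eqref{E18} that $g'=9t/g$, hence $g^2=a_0^2R_0^6+9t^2$ and $aR^3-t=\sqrt{a_0^2R_0^6+9t^2}>0$. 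This immediately gives $G>0$ on $]0,\infty[$ and global existence, and substituting $aR^3=\sqrt{a_0^2R_0^6+9t^2}+t$ back into \eqref{E18} decouples the system into first-order linear ODEs for $\log a$ and $\log R^3$ that integrate explicitly, yielding all the stated monotonicity and the limits $a(t)\to 0$, $R(t)\to\infty$ without any compactness argument. You should replace the vague final paragraph with this direct integration; as written, the key positivity claim and the asymptotics are asserted rather than proved. A minor secondary point: in $n=2$ the inner field $Z_{in}$ is literally the ball calibration already constructed (its boundary data $\operatorname{div}Z_{in}(R)=-1/R$ automatically matches $\operatorname{div}Z_{out}(R)=-1/R$), so no new Saint-Venant problem with ``boundary data inherited from $Z_{out}$'' needs to be solved.
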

	We note that the infinite extinction time observed in $n \leq 2$ is related to the fact that $0$ is not an element of the affine space $u_0+D^{-1}$ where the flow lives if $\int u_0 \neq 0$.
	In \cite{GK}, finite time extinction for solution to \eqref{E1} is proved in a periodic setting for average zero initial data when the space dimension $n\leq4$
	Our result is unrelated to their result because we consider \eqref{E1} in $\mathbb{R}^n$.

	The formula \eqref{E16} does not give a solution to \eqref{E1} when $n\geq4$ since $\nabla\operatorname{div}Z_{out}$ does not belong to $L^2\left(\left(\overline{B_{R(t)}}\right)^c\right)$.
	In the case $n=3$, this formula is consistent with our definition.
	If we consider $u_0$ strictly radially decreasing for $|x|>R_0$ and $u_0(x)=u_*$ for $|x|\leq R_0$, then $u_0$ does not belong to the domain of $\partial_{D^{-1}}TV$ for $n\geq4$.
	In other words, there is no Cahn-Hoffman vector field.

	These results contrast with the second-order total variation flow
	\[u_t=\operatorname{div}\left(\nabla u/|\nabla u|\right).\]
	In the second-order problem, a ball and an annulus are always calibrable with their complements, see e.\,g.\ \cite{ACM} or \cite[Section 5]{GP}. Furthermore, $u_t(t,\cdot)$ is a locally integrable function without singular part for $t>0$ . Thus, for example, the solution starting from $u_0=a_0\mathbf{1}_{B_{R_0}}$ ($a_0>0$) must be $u(t,x)=a(t)\mathbf{1}_{B_{R_0}}$ with $a(t)=-\lambda t+a_0$, where $\lambda$ is the Cheeger ratio, i.\,e.\ $\lambda=\mathcal{H}^{n-1}(\partial B_{R_0})/\mathcal{L}^n(B_{R_0})$. In particular, the extinction time $t_*$ equals $t_*=a_0/\lambda$.

	We conclude this paper by deriving a system of ODEs prescribing the solution in the case when the initial datum is a piecewise constant, radially symmetric function, which we call a stack. To be precise, we say that $w \in E^{-1}$ is a stack if it is of the form 
	\[w = a^0 \mathbf{1}_{B_{R^0}} + a^1 \mathbf{1}_{A_{R^0}^{R^1}} +\ldots + a^{N-1} \mathbf{1}_{A_{R^{N-2}}^{R^{N-1}}} + a^N \mathbf{1}_{\mathbb{R}^n \setminus B_{R^{N-1}}},\]
	$0<R^0<R^1<\ldots<R^{N-1}$, $a^k \in \mathbb{R}$. In particular, we obtain 
	\begin{thm} \label{MAIN3} 
		Let $n \neq2$ and let $u_0$ be a stack. If $u$ is the solution to \eqref{E1}, then $u(t,\cdot)$ is a stack for $t>0$. 
	\end{thm}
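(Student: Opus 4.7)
The plan is to construct the solution directly in stack form and then invoke uniqueness from Theorem \ref{MAIN0}. Look for $u$ of the ansatz form
\[
u(t,x) = \sum_{k=0}^{N} a^k(t)\,\mathbf{1}_{A_{R^{k-1}(t)}^{R^k(t)}}(x),
\]
with the convention $R^{-1}(t)\equiv 0$ and $R^{N}(t)\equiv +\infty$, and the initial values $R^k(0)$, $a^k(0)$ read off from $u_0$. For a matching Cahn--Hoffman field, take $Z(t,x)=z(t,|x|)\,x/|x|$, obtained by gluing on each annular layer $A_{R^{k-1}(t)}^{R^k(t)}$ the radial calibration of that layer, with boundary signatures determined by the signs of $a^{k-1}(t)-a^k(t)$ on $\partial B_{R^{k-1}(t)}$ and $a^k(t)-a^{k+1}(t)$ on $\partial B_{R^k(t)}$.

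Because $n\neq 2$, Theorem \ref{MAIN1}(i), (ii), (iv) supplies such calibrations for balls, annuli, and complements of balls, with any admissible combination of boundary signatures. Conditions \eqref{E7} and \eqref{E8} imposed by each local calibration guarantee that both $z(t,\cdot)$ and $\operatorname{div} Z(t,\cdot)=r^{1-n}(r^{n-1}z(t,r))'$ are continuous across every sphere $\partial B_{R^k(t)}$, hence the glued $Z$ has $\operatorname{div} Z(t,\cdot)\in E^1_0$. On each annular region $-\Delta\operatorname{div} Z$ equals a constant $\lambda_k$ depending only on $(R^{k-1},R^k,n)$ and the chosen signatures, while across each sphere $\partial B_{R^k(t)}$ it picks up a singular part $\mu_k\,\delta_{\partial B_{R^k(t)}}$, where $\mu_k$ is the jump of $\boldsymbol\nu\cdot\nabla\operatorname{div} Z$. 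Matching $u_t=-\Delta\operatorname{div} Z$ as distributions (the distributional $u_t$ consists of a bulk part $\dot a^k$ on each annulus plus a singular part $(a^{k+1}-a^k)\dot R^k\,\delta_{\partial B_{R^k}}$) produces the closed ODE system
\[
\frac{\mathrm{d}a^k}{\mathrm{d}t}=\lambda_k(\mathbf{R}), \qquad (a^{k+1}-a^k)\,\frac{\mathrm{d}R^k}{\mathrm{d}t}=\mu_k(\mathbf{R}),
\]
where $\mathbf{R}=(R^0,\dots,R^{N-1})$ and the initial data come from $u_0$.

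This system has a locally Lipschitz right-hand side as long as the radii stay ordered and bounded away from $0$ and $+\infty$ and the consecutive jumps $a^{k+1}-a^k$ do not vanish, so standard ODE theory yields a unique solution on a maximal interval $[0,T_*)$. On this interval the constructed pair $(u,Z)$ satisfies Definition \ref{DEF0}: the bound $|Z|\leq 1$ is inherited from each calibration, and the duality identity \eqref{E6} follows by integrating $-\langle u,\operatorname{div} Z\rangle_E$ by parts on each annular layer and invoking $Z\cdot\boldsymbol\nu=\pm 1$ on every interface. Theorem \ref{MAIN0} then forces the actual solution to coincide with this stack on $[0,T_*)$. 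The main obstacle is the behavior at $T_*$: either two radii collide or two adjacent values $a^k, a^{k+1}$ become equal; in either case $u(T_*,\cdot)$ is still a stack, but with strictly fewer layers, and the continuity of the flow in $E^{-1}$ guaranteed by Theorem \ref{MAIN0} lets us restart the construction from $u(T_*,\cdot)$. Since the layer count strictly decreases at each restart and starts bounded by $N+1$, only finitely many restarts are needed to cover $[0,\infty)$, and $u(t,\cdot)$ remains a stack for all $t>0$.
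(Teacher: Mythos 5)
Your proof follows essentially the same route as the paper: glue the radial calibrations from Theorem \ref{MAIN1} into a Cahn--Hoffman field, read off the bulk and singular parts of $-\Delta\operatorname{div}Z$ to obtain the closed ODE system for $(a^k,R^k)$, invoke local ODE theory and the uniqueness in Theorem \ref{MAIN0}, and restart finitely often when layers merge. The argument is correct and matches the paper's construction in all essential respects.
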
   
	\noindent In the case $n = 2$, this result is no longer true, as evidenced by Theorem \ref{MAIN2}. However the solution can still be prescribed by a finite system of ODEs. 

	A total variation flow type equation
	\begin{equation} \label{E19}
		w_t = -\Delta \left( \operatorname{div}\left(\nabla w /|\nabla w|\right)+\beta\operatorname{div}\left(\nabla w|\nabla w|\right) \right)
	\end{equation} 
	was introduced by \cite{Sp} to describe the height of crystal surface moved by relaxation dynamics below the roughening temperature, where $\beta>0$.
	For this equation, characterization of the subdifferential of the corresponding energy was given by Y.\ Kashima in periodic setting \cite{Ka1}, \cite{Ka2} and under Dirichlet condition on a bounded domain \cite{Ka2}.
	The speed of a facet (a flat part of the graph) is calculated for $n=1$ in \cite{Ka1} and for a ball with the Dirichlet condition under radial symmetry \cite{Ka2}.
	Different from the second-order problem, the speed of a facet is determined not only by the shape of facet.
	Also it has been already observed in \cite{Ka1}, that the minimal section may have a delta part although the behavior of the corresponding solution was not studied there.
	A numerical computation was given in \cite{KV}.
	The equation \eqref{E19} was derived as a continuum limit of models describing motion of steps on crystal surface as discussed in \cite{Od}, where numerical simulation was given; see also \cite{Koh}.

	In \cite{CRCT}, a crystalline diffusion flow was proposed and calculated numerically.
	In a special case, it is of the form $w_t=-\partial^2_x\left(W'(w_x)\right)$, where $W$ is a piecewise linear convex function, when the curve is given as the graph of a function.
	This equation was analyzed in \cite{GG2} in a class of piecewise linear (in space) solutions. 

	Fourth-order equations of type \eqref{E1} were proposed for image denoising as an improvement over the second-order total variation flow.
	For example, the equation
	\[
	w_t = -\Delta\operatorname{div}\left( \nabla w/|\nabla w| \right) + \lambda(f-w),
	\]
	where $f$ is an original image which is given and $\lambda>0$, corresponds to the Osher-Sol\'e-Vese model \cite{OSV}.
	The well-posedness of this equation was proved by using the Galerkin method by \cite{EIS}.
	
	For \eqref{E1}, an extinction time estimate was given in \cite{GK} for $n=1,2,3,4$ in the periodic setting. It was extended to the Dirichlet problem in a bounded domain by \cite{GKM}.
	In the review paper \cite{GG}, it was proved that the solution $u$ of \eqref{E1} in $n=1$ may become discontinuous instantaneously even if the initial datum is Lipschitz continuous, because the speed may have a delta part.
	
	There are a few numerical studies for \eqref{E1} in the periodic setting. A duality-based numerical scheme which applies the forward-backward splitting has been proposed in \cite{GMR}. A split Bregman method was adjusted to \eqref{E1} and also \eqref{E19} in \cite{GU}. In these methods, the singularity of the equation at $\nabla u=0$ is not regularized. However, all above studies deal with either periodic, Dirichlet or Neumann boundary condition for a bounded domain. It has never been rigorously studied in $\mathbb{R}^n$, although in \cite{GKM} there are some preliminary calculations for radial solution in $\mathbb{R}^n$.

	This paper is organized as follows.
	In Section \ref{S2}, we discuss basic properties of the total variation on $D^{-1}$, notably we show strict density of $C^\infty_{c,av}$.
	In Section \ref{S3}, we give a rigorous definition of a solution to \eqref{E1} and obtain a verifiable characterization of solutions.
	In Section \ref{SSD}, we extend the results of the previous section to include initial data with non-zero average in $n=1,2$.
	In Section \ref{SCA}, we introduce the notion of calibrability.
	In Section \ref{examples}, we discuss calibrability of rotationally symmetric sets in $\mathbb{R}^n$. 
	In Section \ref{SDS}, we study solutions emanating from piecewise constant, radially symmetric data.

\section{The total variation functional on $D^{-1}$} \label{S2} 
In this section, we give a rigorous definition of the total variation $TV$ on $D^{-1}$ and relate it to the usual total variation defined on $L^1_{loc}$. The main tool that we use here as well as in the following section is an approximation lemma, which for a given $w \in D^{-1}$ produces a sequence of nice functions $w_k \in D^{-1}$ that converges to $w$ in $D^{-1}$ and $TV(w_k) \to TV(w)$. 

Let us denote 
\[X_1 = \left\{ \psi \in C^\infty_c(\mathbb{R}^n, \mathbb{R}^n), \ \|\psi\|_{L^\infty(\mathbb{R}^n, \mathbb{R}^n)} \leq 1\right\}.\]
We define $TV \colon D^{-1}(\mathbb{R}^n) \to [0, \infty]$ by 
\[TV(u) = \sup_{\psi \in X_1} \langle u, \operatorname{div}\, \psi\rangle.  \]
Let us compare this definition with the usual total variation, which we denote here by $\overline{TV}\colon L^1_{loc}(\mathbb{R}^n) \to [0, \infty]$, defined by 
\[\overline{TV}(u) = \sup_{\psi \in X_1} \int_{\mathbb{R}^n} u \,\operatorname{div}\, \psi.\]
First of all, as in the case $\overline{TV}$, we easily check that $TV$ is lower semicontinuous with respect to the weak-* (and, a fortiori, strong) convergence in $D^{-1}(\mathbb{R}^n)$. Indeed, if $v_k \mathrel{\ensurestackMath{\stackon[0pt]{\rightharpoonup}{\scriptstyle\ast}}} v$ in $D^{-1}(\mathbb{R}^n)$,   
\begin{equation*}TV(v) = \sup_{\psi \in X_1} \{\langle v, \operatorname{div}\,\psi \rangle\} = \sup_{\psi \in X_1} \liminf_{k\to \infty} \{\langle v_k, \operatorname{div}\,\psi \rangle\}  \leq  \liminf_{k\to \infty} \sup_{\psi \in X_1} \{\langle v_k, \operatorname{div}\,\psi \rangle\} = \liminf_{k\to \infty} TV(v_k).
\end{equation*}  
In fact, we have
\begin{lem} \label{tvxtv} 
	We have $D(TV) \subset L^1_{loc}$, and so $D(TV) \subset D(\overline{TV})$ with $TV$ and $\overline{TV}$ coinciding on $D(TV)$.   
	In particular, if $n \geq 2$, $D(TV) \subset L^{1^*}(\mathbb{R}^n)$. If $n=1$, \[D(TV) \subset L^\infty_0(\mathbb{R})  
	= \left\{w \in L^\infty(\mathbb{R}) \colon \esslim\limits_{x \to \pm \infty} w(x) = 0 \right\}.\]
\end{lem}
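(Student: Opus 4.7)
The first ingredient is that $TV(u) < \infty$ forces the distributional gradient $\nabla u$ to be a finite vector-valued Radon measure. Indeed, for any nonzero $\psi \in C^\infty_c(\mathbb R^n, \mathbb R^n)$, the normalization $\psi/\|\psi\|_{L^\infty} \in X_1$ yields $|\langle u, \operatorname{div}\psi\rangle| \leq TV(u)\|\psi\|_{L^\infty}$, so $\nabla u$ extends to a finite vector-valued Radon measure with $|\nabla u|(\mathbb R^n) \leq TV(u)$. By the classical fact that a distribution whose gradient is a locally finite Radon measure is represented by an $L^1_{loc}$ function (obtained e.g.\ via mollification and the Poincar\'e inequality on balls), $u$ lies in $L^1_{loc}(\mathbb R^n) \cap BV_{\mathrm{loc}}(\mathbb R^n)$. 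Since $C^\infty_c \subset D^1_0$ and the embedding $D^{-1} \hookrightarrow \mathcal D'$ is consistent, $\langle u, \operatorname{div}\psi\rangle = \int u\, \operatorname{div}\psi$ for every $\psi \in X_1$; taking the supremum then yields $TV(u) = \overline{TV}(u) = |\nabla u|(\mathbb R^n)$, settling the first two assertions.

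For $n \geq 2$, the scale-invariant Sobolev-Poincar\'e inequality for BV functions on balls gives, for every $R > 0$,
\[\|u - m_R\|_{L^{1^*}(B_R)} \leq C(n)\,|\nabla u|(B_R) \leq C\, TV(u), \quad m_R := \tfrac{1}{|B_R|}\int_{B_R} u.\]
A triangle-inequality argument on $B_{R_1}$ (for $R_1 < R_2$) yields $|m_{R_2} - m_{R_1}| \leq C\,TV(u)/R_1^{n-1}$, so $\{m_R\}$ is Cauchy and converges to some $c \in \mathbb R$; Fatou's lemma then gives $u - c \in L^{1^*}(\mathbb R^n)$ with $\|u-c\|_{L^{1^*}} \leq C\, TV(u)$. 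For $n = 1$, no Sobolev inequality is needed: since $u'$ is a finite Radon measure on $\mathbb R$, $u$ is bounded with well-defined essential limits $u(\pm\infty)$.

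The main step is to force $c = 0$ (respectively $u(\pm\infty) = 0$) using the $D^{-1}$ structure. For $n \geq 2$, fix $\eta \in C^\infty_c(\mathbb R^n)$ with $\int \eta \neq 0$ and set $\varphi_R(x) = \eta(x/R)$. A direct rescaling yields $\|\varphi_R\|_{D^1_0} = R^{(n-2)/2}\|\nabla \eta\|_{L^2}$, $\|\varphi_R\|_{L^n} = R\|\eta\|_{L^n}$, and $\int \varphi_R = R^n \int \eta$. From the decomposition $\int u\, \varphi_R = c R^n \int \eta + \int (u-c)\varphi_R$, combined with $|\int u\, \varphi_R| \leq \|u\|_{D^{-1}}\|\varphi_R\|_{D^1_0}$ and H\"older's inequality $|\int (u-c)\varphi_R| \leq \|u-c\|_{L^{1^*}}\|\varphi_R\|_{L^n}$, one deduces $|c| R^n |\!\int\!\eta| \leq O(R^{(n-2)/2}) + O(R) = o(R^n)$ for all $n \geq 2$, forcing $c = 0$. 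For $n = 1$, take $\varphi_R \in C^\infty_c(\mathbb R)$ with $\varphi_R \equiv 1$ on $[R, 2R]$, $\supp\varphi_R \subset [R/2, 5R/2]$ and $|\varphi_R'| \leq 4/R$. Then $\|\varphi_R\|_{D^1_0} = O(R^{-1/2})$, while $\int u\,\varphi_R = u(+\infty)\int \varphi_R + (\text{error})$ with $\int \varphi_R \sim \tfrac{3R}{2}$ and the error bounded using $|u(x) - u(+\infty)| \leq |u'|([x,\infty))$ by $2R\cdot |u'|([R/2,\infty)) = o(R)$; the $D^{-1}$ bound then forces $u(+\infty) = 0$, and analogously $u(-\infty) = 0$.

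The main obstacle is the last step: one must select a family of test functions whose $D^1_0$-norm, $L^{(1^*)'}$-norm and integral all scale differently in $R$ so that the constant term dominates both error terms. The dilation family $\varphi_R(x) = \eta(x/R)$ accomplishes this uniformly for $n \geq 2$, while the one-dimensional case requires instead a spatially localized cutoff near $\pm\infty$ whose $D^1_0$-seminorm is small.
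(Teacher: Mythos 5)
Your proof is correct, and it takes a genuinely different route from the paper's. The paper proves this lemma as a corollary of the nontrivial approximation Lemma~\ref{approx}: given $w \in D(TV)$, it takes the sequence $w_k \in C^\infty_{c,av}$ with $w_k \to w$ in $D^{-1}$ and $TV(w_k) \to TV(w)$, notes that $\|w_k\|_{L^{1^*}}$ (resp.\ $\|w_k\|_{L^\infty}$ in $n=1$) is uniformly bounded via the Sobolev inequality, and passes to the (weak) limit. Crucially, the zero-average property of the approximants makes the constant of integration vanish automatically. You instead give a direct, self-contained argument: the bound $|\langle u, \operatorname{div}\psi\rangle| \leq TV(u)\|\psi\|_\infty$ shows $\nabla u$ is a finite measure, hence $u \in BV_{loc}$; the scale-invariant Sobolev--Poincar\'e inequality on balls gives $u - m_R$ bounded in $L^{1^*}$ and the telescoping estimate $|m_{R_2}-m_{R_1}|\lesssim TV(u)/R_1^{n-1}$ shows $m_R \to c$; and then a carefully scaled family of test functions ($\varphi_R(x)=\eta(x/R)$ for $n\geq 2$, a cutoff localized near $\pm\infty$ for $n=1$) exploits the mismatch between the scaling of $\|\varphi_R\|_{D^1_0}$, $\|\varphi_R\|_{L^n}$ and $\int\varphi_R$ to force $c=0$ (resp.\ $u(\pm\infty)=0$). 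Your argument avoids Lemma~\ref{approx} entirely, at the cost of the dilation/duality computation; the paper's is shorter given the approximation lemma, but that lemma carries the real weight. Both arguments are scale-invariant in essence, and your treatment of the limit at $\pm\infty$ in $n=1$ parallels the trick with $\eta^\pm_R$ used inside the paper's proof of Lemma~\ref{approx}, so the underlying mechanism for killing the constant is the same — you have simply inlined it rather than inherited it from the approximants' zero average.
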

The proof of this fact is a consequence of the lemma below and we postpone it.    

\begin{lem} \label{approx}
	For any $w \in D^{-1}(\mathbb{R}^n)$ there exists a sequence $w_k \in C^\infty_{c,av}(\mathbb{R}^n)$ 	such that 
	\[w_k \to w \text{ in } D^{-1}(\mathbb{R}^n)\]
	and
	\[TV(w_k) \to TV(w).\]
\end{lem}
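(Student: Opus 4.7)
Weak-$\ast$ lower semicontinuity of $TV$ on $D^{-1}$ (verified above) provides $TV(w) \leq \liminf_k TV(w_k)$ for any $w_k \to w$ in $D^{-1}$, so the task reduces to constructing $w_k \in C^\infty_{c,av}$ with $w_k \to w$ in $D^{-1}$ and $\limsup_k TV(w_k) \leq TV(w)$. If $TV(w) = \infty$, density of $\widetilde D^{-1}$ in $D^{-1}$ supplies some $D^{-1}$-approximation by $C^\infty_{c,av}$ functions and lower semicontinuity alone forces $TV(w_k) \to \infty$; only the case $TV(w) < \infty$ is substantive.

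I would proceed by mollification followed by truncation. Writing $w = -\Delta v$ with $v \in D^1_0$ and letting $\rho_\varepsilon$ be a standard even mollifier, the smooth mollifications $w^\varepsilon := \rho_\varepsilon \ast w = -\Delta(\rho_\varepsilon \ast v)$ satisfy
\[\|w^\varepsilon - w\|_{D^{-1}} = \|\nabla(\rho_\varepsilon \ast v - v)\|_{L^2} \to 0 \text{ as } \varepsilon \to 0.\]
For TV, the inclusion $\rho_\varepsilon \ast X_1 \subset X_1$ gives $\langle w^\varepsilon, \operatorname{div}\psi\rangle = \langle w, \operatorname{div}(\rho_\varepsilon\ast\psi)\rangle \leq TV(w)$ for every $\psi \in X_1$, so $TV(w^\varepsilon) \leq TV(w)$, and combined with lower semicontinuity this forces $TV(w^\varepsilon) \to TV(w)$.

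To truncate, I would choose a smooth cutoff $\eta_R$ equal to $1$ on $B_R$ and vanishing outside a larger ball, and form $w^\varepsilon_R := \eta_R w^\varepsilon - m_R\, \xi$, where $\xi \in C^\infty_c$ is a fixed bump of unit integral supported far from $\supp \eta_R$ and $m_R := \int \eta_R w^\varepsilon$ is chosen to kill the mean, so that $w^\varepsilon_R \in C^\infty_{c,av}$. The TV splits as
\[TV(w^\varepsilon_R) \leq TV(w^\varepsilon) + \int |w^\varepsilon|\,|\nabla\eta_R| + |m_R|\, TV(\xi),\]
while the $D^{-1}$-error is controlled by the duality identity $\langle (1-\eta_R)w^\varepsilon,\phi\rangle = \langle w^\varepsilon, (1-\eta_R)\phi\rangle$ together with a Poincar\'e-type bound on $\|(1-\eta_R)\phi\|_{D^1_0}$. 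Both errors should vanish as $R \to \infty$, and $m_R \to 0$ follows from $w$ being distributionally mean-free (in $n \leq 2$); a diagonal selection $(\varepsilon_k, R_k) \to (0,\infty)$ yields the desired $w_k$.

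\textbf{Main obstacle.} The delicate point is the cutoff in dimension $n = 2$. For $n \geq 3$ a linear cutoff with $|\nabla\eta_R| \lesssim 1/R$ is sufficient, thanks to the Hardy inequality $\int\phi^2/|x|^2 \leq C\int|\nabla\phi|^2$ that bounds $\int\phi^2|\nabla\eta_R|^2$ on $B_{2R}\setminus B_R$ by a quantity tending to $0$. In $n = 2$ this Hardy inequality fails and the Dirichlet energy of any linear cutoff is of unit order, so the $D^{-1}$-error cannot be closed that way. The remedy, as flagged in the introduction, is an \emph{efficient} logarithmic cutoff such as
\[\eta_R(x) = \min\bigl(1, \max(0, \log(R^2/|x|)/\log R)\bigr),\]
smoothed as necessary, equal to $1$ on $B_R$, vanishing outside $B_{R^2}$, with $\|\nabla\eta_R\|_{L^2(\mathbb{R}^2)}^2 \sim 1/\log R \to 0$. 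This decay is precisely what drives the $D^{-1}$-error to zero while keeping the TV-error and the mean correction negligible; coordinating these estimates is the technical heart of the lemma.
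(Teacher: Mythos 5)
Your overall strategy (mollify, truncate, correct the mean) is the same as the paper's, which defines $w_{\varepsilon,R}=\bigl(\varrho_\varepsilon*w-\tfrac{\int\vartheta_R\varrho_\varepsilon*w}{\int\vartheta_R}\bigr)\vartheta_R$ with $\vartheta_R$ the minimal-$D^1_0$-norm cutoff equal to $1$ on $B_{R/2}$ and $0$ outside $B_R$. But several of your specific claims do not hold up, and the argument as written would not close.

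First, your rationale for the logarithmic cutoff is wrong. The paper's $\vartheta_R$ in $n=2$ is itself logarithmic, yet $\|\nabla\vartheta_R\|_{L^2}$ is \emph{constant} in $R$, not vanishing, and the proof works anyway. What actually makes the $D^{-1}$ estimate close is not decay of the Dirichlet energy of the cutoff but the interplay between the $L^\infty$ size of $\nabla\vartheta_R$ and the Poincar\'e constant: $\|\nabla\vartheta_R\|_{L^\infty}\sim R^{-1}$ exactly cancels the $O(R)$ constant in $\bigl\|\varphi-\tfrac{\int\vartheta_R\varphi}{\int\vartheta_R}\bigr\|_{L^2(B_R)}\leq CR\|\nabla\varphi\|_{L^2}$, and the residual factor $\|\nabla\varrho_\varepsilon*\widetilde w\,\mathbf{1}_{B_R\setminus B_{R/2}}\|_{L^2}\to 0$ drives the error to zero. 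With your cutoff from $B_R$ to $B_{R^2}$, $\|\nabla\eta_R\|_{L^\infty}\sim(R\log R)^{-1}$ but the Poincar\'e constant on that annulus is $\sim R^2$, so the naive product is $R/\log R\to\infty$; you would need a genuine Hardy-type inequality on the annulus, with a correctly chosen subtracted average, not merely the observation that $\|\nabla\eta_R\|_{L^2}\to 0$. Your statement that the $n\geq3$ Hardy inequality bounds $\int\phi^2|\nabla\eta_R|^2$ ``by a quantity tending to $0$'' is also false: it gives a bound by $C\|\nabla\phi\|^2_{L^2}$, uniformly in $R$, not a vanishing one; the vanishing factor is elsewhere.

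Second, your truncation $w^\varepsilon_R=\eta_R w^\varepsilon-m_R\xi$ produces the term $\int|w^\varepsilon|\,|\nabla\eta_R|$ in the $TV$ estimate, which requires knowing a priori that $w^\varepsilon\in L^{1^*}(\mathbb R^n)$ (or $L^2$ in $n=2$). That integrability is not free: the paper first proves a uniform bound on $TV(w_{\varepsilon,R})$ and only then bootstraps $\varrho_\varepsilon*w\in L^2$ in $n=2$; the paper's multiplicative decomposition $(\varrho_\varepsilon*w-c_R)\vartheta_R$ keeps the truncated functions compactly supported and lets the Poincar\'e--Sobolev inequality supply the needed integrability. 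Your plan assumes what the paper has to earn. Also, to conclude $m_R=\int\eta_R w^\varepsilon\to 0$ you must go through the $D^{-1}$ duality $|m_R|\leq\|\nabla\eta_R\|_{L^2}\|w^\varepsilon\|_{D^{-1}}$; this is a correct use of $\|\nabla\eta_R\|_{L^2}\to 0$, but only for this side term, not for the main error.

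Third, and most concretely: you never address $n=1$. There $D^1_0(\mathbb R)=D^1(\mathbb R)/\mathbb R$ and the Sobolev embedding degenerates to $L^\infty$, so the annulus estimates must be replaced by a separate argument showing that $\varrho_\varepsilon*w$ tends to $0$ at $\pm\infty$ (the paper establishes this by testing against compactly supported tent functions $\eta^\pm_R$). Without this, your $TV$ upper bound does not converge to $TV(w)$ in $n=1$.
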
 

To prove it, we will use a special choice of cut-off function and associated variant of the Sobolev-Poincar\'e inequality. For $R>0$, let us denote by $\vartheta_R$ the element of minimal norm in $D^1_0(\mathbb{R}^n)$ among those $w \in D^1_0(\mathbb{R}^n)$ that satisfy $w(x) = 1$ if $|x|\leq \frac{R}{2}$, $w(x) = 0$ if $|x| \geq R$. It is an easy exercise to show that for $\frac{R}{2} \leq |x| \leq R$
\begin{equation*}\label{vartheta_form}
	\vartheta_R(x) = \left(2^{n-2}-1\right)^{-1}\left(\left(\frac{|x|}{R}\right)^{2-n} -1 \right) \text{ if } n \neq 2, \quad \vartheta_R(x) = \frac{\log\frac{R}{|x|}}{\log 2} \text{ if } n = 2.
\end{equation*}  
In either case,
\begin{equation}\label{nabla_theta}
	\nabla \vartheta_R(x) = C_n \frac{|x|^{-n}x}{R^{2-n}} \text{ if } \frac{R}{2} \leq |x| \leq R.  
\end{equation} 
\begin{lem}\label{poincare} 
	If $p \in [1,n[$ and $q \in [1, p^*]$, then for all $w \in C^1(\mathbb{R}^n)$, $R>0$ there holds 
	\begin{equation} \label{poincare_ineq}\left\| w - \frac{\int \vartheta_R w}{\int \vartheta_R}\right\|_{L^{q}(B_R)} \leq C R^{1+\frac{n}{q} - \frac{n}{p}} \|\nabla w\|_{L^p(B_R)} 
	\end{equation} 
	with $C=C(n,p)$ and 
	\begin{equation} \label{vartheta_norm}
		\|\nabla \vartheta_R\|_{L^p(\mathbb{R}^n)} =  C R^{-\frac{1}{p}(n-1)(2-p)}
	\end{equation} 
	with a different $C=C(n,p)$.  
\end{lem}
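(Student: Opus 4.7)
The plan is to tackle the two claims of the lemma independently. For the weighted Sobolev--Poincar\'e inequality \eqref{poincare_ineq}, my strategy is to reduce to the standard Sobolev--Poincar\'e inequality with the uniform mean. I would start from the classical estimate
\[
\|w - \bar{w}_{B_R}\|_{L^q(B_R)} \leq C R^{1 + \frac{n}{q} - \frac{n}{p}} \|\nabla w\|_{L^p(B_R)}, \qquad \bar{w}_{B_R} := \tfrac{1}{|B_R|}\int_{B_R} w,
\]
valid for $p\in[1,n[$ and $q \in [1, p^*]$, which one obtains from the scale-invariant endpoint $q = p^*$ (a scaling check verifies the constant is independent of $R$) together with H\"older's inequality providing the descent factor $|B_R|^{1/q - 1/p^*}$ for subcritical $q$. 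It then suffices to estimate the discrepancy between the weighted mean $c_R := \int \vartheta_R w/\int \vartheta_R$ and $\bar{w}_{B_R}$.

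To this end, I write $c_R - \bar{w}_{B_R} = (\int \vartheta_R)^{-1} \int \vartheta_R (w - \bar{w}_{B_R})$ and apply H\"older with exponents $q, q'$. Since $0 \leq \vartheta_R \leq 1$ on $B_R$ and $\vartheta_R \equiv 1$ on $B_{R/2}$, the normalizer satisfies $\int \vartheta_R \geq c_n R^n$, while $\|\vartheta_R\|_{L^{q'}(B_R)} \leq C R^{n/q'}$. This yields
\[
|c_R - \bar{w}_{B_R}| \leq C R^{-n/q}\|w - \bar{w}_{B_R}\|_{L^q(B_R)}.
\]
Multiplying by $|B_R|^{1/q} \sim R^{n/q}$ to convert the constant $c_R - \bar{w}_{B_R}$ to its $L^q(B_R)$ norm absorbs the extra power of $R$, and a triangle inequality produces \eqref{poincare_ineq}.

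For the gradient norm \eqref{vartheta_norm}, the function $\vartheta_R$ is constant outside the annulus $A_{R/2}^R$, so $\nabla \vartheta_R$ is supported there with magnitude given by \eqref{nabla_theta} (with the analogous logarithmic expression handled separately for $n=2$). The dilation identity $\vartheta_R(x) = \vartheta_1(x/R)$ combined with a change of variables $y = x/R$ gives
\[
\|\nabla \vartheta_R\|_{L^p(\mathbb{R}^n)}^p = R^{n-p}\,\|\nabla \vartheta_1\|_{L^p(\mathbb{R}^n)}^p,
\]
and $\|\nabla \vartheta_1\|_{L^p}$ is then evaluated by an elementary radial integral over $\{1/2 \leq |x| \leq 1\}$, which is finite exactly because $p < n$.

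The main obstacle I anticipate is purely one of bookkeeping: tracking the exponents of $R$ consistently through the H\"older step, the $L^q$-to-constant conversion, and the Sobolev descent from $p^*$ down to $q$, so that the final scaling exponent lands on $1 + n/q - n/p$. Once that accounting is in hand, both \eqref{poincare_ineq} and \eqref{vartheta_norm} follow from elementary computations, with no further analytical input.
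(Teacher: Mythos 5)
Your proof of \eqref{poincare_ineq} is correct but takes a genuinely different route from the paper. The paper derives the weighted Poincar\'e inequality directly on the unit ball $B_1$ by the usual compactness (Rellich--Kondrachov) argument, then upgrades the exponent from $p$ to $q$ via the Sobolev inequality, and finally obtains the $R$-dependence by a single change of variables $x\mapsto x/R$, using $\vartheta_1(x)=\vartheta_R(Rx)$. You instead start from the standard Sobolev--Poincar\'e inequality with the Lebesgue mean and control the discrepancy $c_R-\bar w_{B_R}$ by H\"older; the exponent accounting $|c_R-\bar w_{B_R}|\leq CR^{-n/q}\|w-\bar w_{B_R}\|_{L^q(B_R)}$ followed by multiplication by $|B_R|^{1/q}$ and a triangle inequality is sound. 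The two proofs are interchangeable; yours avoids re-running the compactness argument for the weighted mean, at the cost of one extra (but elementary) comparison step.

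For \eqref{vartheta_norm}, two issues. First, the remark that the radial integral is ``finite exactly because $p<n$'' is spurious: $\nabla\vartheta_1$ is a bounded function supported on the compact annulus $\{1/2\leq|x|\leq1\}$, so $\|\nabla\vartheta_1\|_{L^p(\mathbb{R}^n)}<\infty$ for \emph{every} $p\in[1,\infty]$; the hypothesis $p<n$ is needed only for $p^*$ and the Poincar\'e inequality. Second, and more substantively, your scaling identity $\|\nabla\vartheta_R\|_{L^p}^p=R^{n-p}\|\nabla\vartheta_1\|_{L^p}^p$ yields the exponent $\frac{n-p}{p}$, which is \emph{not} equal to the stated $-\frac{1}{p}(n-1)(2-p)=\frac{(n-1)(p-2)}{p}$ except on the curve $p=3-2/n$. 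Direct evaluation of the annular integral from \eqref{nabla_theta} confirms $\|\nabla\vartheta_R\|_{L^p}=CR^{(n-p)/p}$ (e.g.\ $n=3$, $p=2$ gives $\|\nabla\vartheta_R\|_{L^2}\sim R^{1/2}$, whereas the printed formula predicts $R^{0}$), so your scaling relation gives the correct value and the exponent printed in \eqref{vartheta_norm} appears to be an error. Your proposal, however, explicitly defers the ``bookkeeping'' that would reconcile the derived exponent with the stated one, and that is exactly the step that fails; a complete treatment should flag the mismatch rather than assume it works out.
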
 
\begin{proof} 
	Let $v \in C^1(\mathbb{R}^n)$. Following the proof of the standard Poincar\'e inequality by contradiction using Rellich-Kondrachov theorem, we obtain
	\[\left\| v - \frac{\int \vartheta_1 v}{\int \vartheta_1}\right\|_{L^{p}(B_1)} \leq C \|\nabla v\|_{L^p(B_1)}. \]
	Applying the Sobolev inequality in $B_1$ to the function $v - \frac{\int \vartheta_1 v}{\int \vartheta_1}$, we upgrade this to
	\begin{equation}\label{poinc_unit}
		\left\| v - \frac{\int \vartheta_1 v}{\int \vartheta_1}\right\|_{L^{q}(B_1)} \leq C \|\nabla v\|_{L^p(B_1)}
	\end{equation}
	Next, let $v(x) = w(Rx)$ for a given $w \in C^1(\mathbb{R}^n)$. We observe that 
	\[\vartheta_1(x) = \vartheta_R(Rx) \quad \text{for } x \in \mathbb{R}^n\]
	and so, by a change of variables $x = y/R$, 
	\[\int \vartheta_1 = \frac{1}{R^n}\int \vartheta_R, \quad \int \vartheta_1 v = \frac{1}{R^n}\int \vartheta_R w.\]
	Applying the same change of variables to both sides of \eqref{poinc_unit}
	we conclude the proof of \eqref{poincare_ineq}. 
	
	The proof of \eqref{vartheta_norm} is a matter of direct calculation.   
\end{proof}  
Let us now return to the proof of the approximation lemma. 
\begin{proof}[Proof of Lemma \ref{approx}] 
	Given $w \in D^{-1}$, let 
	\[w_{\varepsilon, R} = \left(\varrho_\varepsilon * w - \frac{\int \vartheta_R \varrho_\varepsilon * w}{\int \vartheta_R}\right) \vartheta_R.\]
	Equivalently, for $\varphi \in D_0^1(\mathbb{R}^n)$, 
	\[\langle w_{\varepsilon, R}, \varphi \rangle = \left\langle  w, \varrho_\varepsilon * \left(\left(\varphi - \frac{\int \vartheta_R \varphi}{\int \vartheta_R}\right) \vartheta_R\right)\right\rangle. \]
	Denoting $\widetilde{w} = (-\Delta)^{-1} w$, 
	\begin{multline*}
		\langle w_{\varepsilon, R} - w, \varphi \rangle = \int \nabla \varrho_\varepsilon * \widetilde{w} \cdot \nabla \left(\left(\varphi - \frac{\int \vartheta_R \varphi}{\int \vartheta_R}\right) \vartheta_R - \varphi\right) + \int \left(\nabla \varrho_\varepsilon * \widetilde{w} - \nabla \widetilde{w}\right)\cdot \nabla \varphi\\ = \int \nabla \varrho_\varepsilon * \widetilde{w} \cdot (\vartheta_R - 1) \nabla \varphi  + \int \nabla \varrho_\varepsilon * \widetilde{w} \cdot  \left(\varphi - \frac{\int \vartheta_R \varphi}{\int \vartheta_R}\right) \nabla \vartheta_R + \int \left(\nabla \varrho_\varepsilon * \widetilde{w} - \nabla \widetilde{w}\right)\cdot \nabla\varphi.
	\end{multline*} 
	We estimate the second term on the r.\,h.\,s.\ using the Poincar\'e inequality from Lemma \ref{poincare}, taking into account that the support of the integrand is contained in $\overline{A}_R$, where $A_R=B_R \setminus \overline{B}_{R/2}$,
	\begin{multline*}
		\left|\int \nabla \varrho_\varepsilon * \widetilde{w} \cdot  \left(\varphi - \frac{\int \vartheta_R \varphi}{\int \vartheta_R}\right) \nabla \vartheta_R\right| \leq C \|\nabla \varrho_\varepsilon * \widetilde{w}  \,\mathbf 1_{A_R}\|_{L^2(\mathbb{R}^n)} \left\|\varphi - \frac{\int \vartheta_R \varphi}{\int \vartheta_R}\right\|_{L^2(\mathbb{R}^n)} \|\nabla \vartheta_R\|_{L^\infty(\mathbb{R}^n)} \\ \leq C \|\nabla \varrho_\varepsilon * \widetilde{w}  \,\mathbf 1_{A_R}\|_{L^2(\mathbb{R}^n)} \|\nabla \varphi\|_{L^2(\mathbb{R}^n)}.
	\end{multline*} 
	Thus, 
	\begin{multline*}
		\|w_{\varepsilon, R} - w\|_{D^{-1}(\mathbb{R}^n)} = \sup_{\|\varphi\|_{D^1_0(\mathbb{R}^n) }\leq 1} \langle w_{\varepsilon, R} - w, \varphi \rangle \\ \leq \|(1 - \vartheta_R) \nabla \varrho_\varepsilon * \widetilde{w}\|_{L^2(\mathbb{R}^n)} + C \|\nabla \varrho_\varepsilon * \widetilde{w}  \,\mathbf 1_{A_R}\|_{L^2(\mathbb{R}^n)} + \|\nabla \varrho_\varepsilon * \widetilde{w}  - \nabla \widetilde{w}\|_{L^2(\mathbb{R}^n)}
	\end{multline*}
	and so 
	\begin{equation}\label{D-1_conv}\lim_{\varepsilon\to 0^+} \lim_{R \to \infty} \|w_{\varepsilon, R} - w\|_{D^{-1}(\mathbb{R}^n)} = 0 . 
	\end{equation} 
	
	Next we estimate $TV(w_{\varepsilon, R})$. Due to lower semicontinuity of $TV$, we can assume without loss of generality that $TV(w)<\infty$.  First, we note that $\varrho_\varepsilon * w \in D^{-1}(\mathbb{R}^n) \cap C^\infty(\mathbb{R}^n)$ (\cite{Sch}, \cite{Ru}) and 
	\[\|\varrho_\varepsilon * \psi\|_{L^\infty(\mathbb{R}^n, \mathbb{R}^n)} \leq \|\psi\|_{L^\infty(\mathbb{R}^n, \mathbb{R}^n)}.\]
	Thus, for any $\psi \in L^\infty(\mathbb{R}^n, \mathbb{R}^n)$,
	\[ TV(\varrho_\varepsilon * w) = \sup_{\psi \in X_1} \left\{ \langle w, \operatorname{div}\, \varrho_\varepsilon *\psi\rangle \right\} \leq TV(w).\]
	In particular, this implies that $\nabla \varrho_\varepsilon * w \in L^1(\mathbb{R}^n)$ for $\varepsilon >0$ and $\int |\nabla \varrho_\varepsilon * w| = TV(\varrho_\varepsilon * w) \leq TV(w)$. By Lemma \ref{poincare},
	\begin{multline}\label{TV_bound}
		TV(w_{\varepsilon, R}) = \int|\nabla w_{\varepsilon, R}| \leq  \int\vartheta_R\left|\nabla \varrho_\varepsilon * w \right| +  \int\left|\left(\varrho_\varepsilon * w - \frac{\int \vartheta_R \varrho_\varepsilon * w}{\int \vartheta_R}\right) \nabla \vartheta_R\right| \\ \leq  \int\left|\nabla \varrho_\varepsilon * w \right| + C \left\|\varrho_\varepsilon * w - \frac{\int \vartheta_R \varrho_\varepsilon * w}{\int \vartheta_R}\right\|_{L^{1^*}(\mathbb{R}^n)} \|\nabla \vartheta_R\|_{L^n(\mathbb{R}^n)} \\ \leq \|\nabla \varrho_\varepsilon * w\|_{L^1(\mathbb{R}^n)} + C \|\nabla \varrho_\varepsilon * w\|_{L^1(\mathbb{R}^n)} R^{-\frac{(n-1)(n-2)}{n}} \leq \left(1 + CR^{-\frac{(n-1)(n-2)}{n}}\right) TV(w).
	\end{multline}  
	If $n\geq 3$, together with lower semicontinuity of $TV$, this yields 
	\[\lim_{(\varepsilon,R) \to (0, \infty)} TV(w_{\varepsilon, R})=TV(w).	\]
	Taking into account \eqref{D-1_conv}, by a diagonal procedure we can select sequences $(\varepsilon_k)$, $(R_k)$ such that $w_k := w_{\varepsilon_k, R_k}$ satisfies both requirements in the assertion. On the other hand, if $n=1$ or $n=2$, \eqref{TV_bound} only implies uniform boundedness of $TV(\nabla w_{\varepsilon, R})$. 
	
	Let now $n=2$. Since $w_{\varepsilon, R}$ have compact support, uniform boundedness of $TV(\nabla w_{\varepsilon, R})$ implies uniform bound on $w_{\varepsilon, R}$ in $L^2(\mathbb{R}^n)$ by the Sobolev inequality. As $w_{\varepsilon, R}$ converges to $\varrho_\varepsilon * w$ in $D^{-1}(\mathbb{R}^n)$, we have $\varrho_\varepsilon * w \in L^2(\mathbb{R}^n)$. This allows us to improve \eqref{TV_bound}: 
	\begin{multline}\label{TV_bound2} 
		TV(w_{\varepsilon, R}) = \int|\nabla w_{\varepsilon, R}| \leq  \int\vartheta_R\left|\nabla \varrho_\varepsilon * w \right| +  \int\left|\varrho_\varepsilon * w \nabla \vartheta_R\right| + \frac{\int \vartheta_R \varrho_\varepsilon * w}{\int \vartheta_R} \int|\nabla \vartheta_R| \\ \leq  \|\nabla \varrho_\varepsilon * w \|_{L^1(\mathbb{R}^2)} + C\|\varrho_\varepsilon * w\, \mathbf 1_{A_R}\|_{L^2(\mathbb{R}^2)} \|\nabla \vartheta_R\|_{L^2(\mathbb{R}^2)}  + C\frac{\|\vartheta_R\|_{D_0^1(\mathbb{R}^2)}\| \varrho_\varepsilon * w\|_{D^{-1}(\mathbb{R}^2)}}{\|\vartheta_R\|_{L^1(\mathbb{R}^2)}} \|\nabla\vartheta_R\|_{L^1(\mathbb{R}^2)}  \\ \leq TV(w) + C\|\varrho_\varepsilon * w\, \mathbf 1_{A_R}\|_{L^2(\mathbb{R}^2)} + \frac{C}{R}\| \varrho_\varepsilon * w\|_{D^{-1}(\mathbb{R}^2)}.
	\end{multline}  
	The r.\,h.\,s.\ of \eqref{TV_bound2} converges to $TV(w)$ as $R\to \infty$ and we conclude as before. 
	
	Next, consider $n=1$. In this case, finiteness of $TV(w)$ implies that $\varrho_\varepsilon * w \in L^\infty(\mathbb{R}^n)$ and there exist $g_\varepsilon^\pm \in \mathbb{R}$ such that 
	\[\lim_{x \to \pm \infty} \varrho_\varepsilon * w(x) = g_\varepsilon^\pm. \]
	Now, let $\eta^\pm_R$ be the element of minimal norm in $D_0^1(\mathbb{R})$ under constraints 
	\[\eta^\pm_R(\pm x) = 1 \text{ if } x \in [2R,3R], \quad \eta^\pm_R(\pm x) = 0 \text{ if } x \not \in ]R,4R[ .\]
	(Clearly, $\eta^\pm_R$ is a continuous, piecewise affine function.) We have 
	\[|\langle \varrho_\varepsilon * w, \eta^\pm_R\rangle| \leq \|\varrho_\varepsilon * w\|_{D^{-1}(\mathbb{R})}\|\eta^\pm_R\|_{D_0^1(\mathbb{R})} \to 0 \text{ as } R \to  \infty. \]
	On the other hand, since $\eta^\pm_R$ are compactly supported and $\varrho_\varepsilon * w$ coincides as distribution with a locally integrable function, we can calculate 
	\[\langle \varrho_\varepsilon * w, \eta^\pm_R\rangle = \int \varrho_\varepsilon * w\, \eta^\pm_R \to \infty \cdot g^\pm_\varepsilon \text{ as } R \to \infty,\]
	so $g^\pm_\varepsilon = 0$. Therefore, we can estimate
	\begin{multline}\label{TV_bound1} 
		TV(w_{\varepsilon, R}) = \int|\nabla w_{\varepsilon, R}| \leq  \int\vartheta_R\left|\nabla \varrho_\varepsilon * w \right| +  \int\left|\varrho_\varepsilon * w \nabla \vartheta_R\right| + \frac{\int \vartheta_R \varrho_\varepsilon * w}{\int \vartheta_R} \int|\nabla \vartheta_R| \\ \leq  TV(w) + \frac{2}{R} \int_{A_R} |\varrho_\varepsilon * w| + 2 \frac{\int \vartheta_R \varrho_\varepsilon * w}{\int \vartheta_R}.
	\end{multline}
	Since we have shown that $\varrho_\varepsilon * w(x) \to 0$ as $x \to \pm \infty$, the averages on the r.\,h.\,s.\ converge to $0$ and we conclude as before. 
	
\end{proof} 

As a first application of the approximation lemma, we demonstrate Lemma \ref{tvxtv} announced before. 

\begin{proof}[Proof of Lemma \ref{tvxtv}] Let $w \in D(TV)$ and let  $(w_k) \subset C_{c,av}^\infty(\mathbb{R}^n)$ be the sequence provided by Lemma \ref{approx}. Let first $n>1$. Since $\nabla w_k$ is uniformly bounded in $L^1(\mathbb{R}^n, \mathbb{R}^n)$, by the Sobolev embedding $w_k$ is uniformly bounded in $L^{1^*}(\mathbb{R}^n)$.
 Therefore, $w \in L^{1^*}(\mathbb{R}^n)$. 
	
	In case $n=1$, since $\nabla w_k$ are compactly supported and uniformly bounded in $L^1(\mathbb{R})$, $w_k$ is uniformly bounded in $L^\infty(\mathbb{R})$. From these two bounds it follows that $w \in L^\infty(\mathbb{R})$ and that $\nabla w$ is a finite signed measure on $\mathbb{R}$. In particular, $w$ has essential limits at $\pm \infty$. Reasoning as in the final part of the proof of Lemma \ref{approx}, we show that these limits vanish. 
\end{proof}

\section{Existence and characterization of the flow on $D^{-1}$} \label{S3}

For a gradient flow of a convex functional, there is a general theory initiated by Y.\ K\=omura \cite{Ko} and developed by H.\ Br\'ezis \cite{Br} and others.
 It is summarized as follows.
\begin{prop}[\cite{Br}] \label{4GT}
Let $H$ be a real Hilbert space.
 Let $\mathcal{E}$ be a lower semicontinuous, convex functional on $H$ with values in $]\!-\!\infty,\infty]$.
 Assume that $D(\mathcal{E})$ is dense in $H$.
 Then, for any $u_0\in H$, there exists a unique solution $u\in C\left([0,\infty[\,, H\right)$ which is absolutely continuous in $(\delta,T)$ (for any $\delta<T<\infty$) satisfying
\begin{equation}\label{def_grad_flow}
	\left \{
\begin{array}{l}
	u_t \in -\partial\mathcal{E}(u) \quad\text{for a.\,e.}\quad t > 0 \\
	u(0) = u_0.
\end{array}
\right.
\end{equation}
Moreover,
\[
	\int^t_s \| u_t \|^2_H\, d\tau \leq \mathcal{E}\left(u(s)\right) - \mathcal{E}\left(u(t)\right)
	\quad\text{for all}\quad t \geq s > 0.
\]
If $\mathcal{E}(u_0)<\infty$, then $s=0$ is allowed.
 In particular, $u_t\in L^2(0,\infty;H)$.
\end{prop}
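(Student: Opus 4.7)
The statement is the classical K\=omura-Br\'ezis theorem on gradient flows of convex lower-semicontinuous functionals in a real Hilbert space, so the plan is to follow the Moreau-Yosida regularization proof from Br\'ezis's monograph \cite{Br}. For each $\lambda>0$ I introduce the Moreau envelope $\mathcal E_\lambda(u) = \min_{v\in H}\{\mathcal E(v) + \frac{1}{2\lambda}\|v-u\|^2\}$. Lower semicontinuity and strict convexity of the penalized functional yield a unique minimizer $J_\lambda u$, the resolvent; standard convex analysis then supplies the three facts I will need: $\mathcal E_\lambda$ is convex and Fr\'echet differentiable with $\lambda^{-1}$-Lipschitz gradient $A_\lambda u := (u - J_\lambda u)/\lambda$ satisfying $A_\lambda u \in \partial \mathcal E(J_\lambda u)$; $\mathcal E_\lambda \leq \mathcal E$ with $\mathcal E_\lambda(u) \uparrow \mathcal E(u)$ as $\lambda \downarrow 0$; and $\|A_\lambda u\|$ is non-increasing in $\lambda$.

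Assuming first $\mathcal E(u_0)<\infty$, the Cauchy-Lipschitz theorem applied to the ODE $u_\lambda' = -A_\lambda u_\lambda$ with $u_\lambda(0) = u_0$ gives a unique global solution $u_\lambda \in C^1([0,\infty); H)$. The identity $\tfrac{d}{dt}\mathcal E_\lambda(u_\lambda) = \langle A_\lambda u_\lambda, u_\lambda'\rangle = -\|u_\lambda'\|^2$ produces the uniform bound $\int_0^T\|u_\lambda'\|^2\, dt \leq \mathcal E(u_0)$. The essential step is that $(u_\lambda)$ is Cauchy in $C([0,T];H)$; this follows from the decomposition $u = J_\lambda u + \lambda A_\lambda u$ together with monotonicity of $\partial \mathcal E$, which combine to yield
$$\tfrac{1}{2}\tfrac{d}{dt}\|u_\lambda - u_\mu\|^2 \leq (\lambda + \mu)\,\|A_\lambda u_\lambda\|\,\|A_\mu u_\mu\|,$$
so that after integration and Cauchy-Schwarz, $\|u_\lambda(t) - u_\mu(t)\|^2 \leq 2(\lambda + \mu)\mathcal E(u_0)$. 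Passing to the limit $\lambda \downarrow 0$ I obtain $u \in C([0,\infty);H)$, absolutely continuous on compact subsets of $]0,\infty[$, with $u_\lambda \to u$ uniformly on compacts, $u_\lambda' \rightharpoonup u_t$ weakly in $L^2$, and $J_\lambda u_\lambda = u_\lambda - \lambda A_\lambda u_\lambda \to u$. Demiclosedness of the maximal monotone graph of $\partial \mathcal E$, applied to the pairs $(J_\lambda u_\lambda, A_\lambda u_\lambda)$, identifies $-u_t(t) \in \partial \mathcal E(u(t))$ for a.\,e.\ $t$, and the energy dissipation inequality is obtained by passing to the limit in the Yosida energy identity and invoking lower semicontinuity of $\mathcal E$. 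Uniqueness is immediate from monotonicity of $\partial \mathcal E$: for two solutions $u^1,u^2$ one computes $\tfrac{1}{2}\tfrac{d}{dt}\|u^1 - u^2\|^2 = -\langle \xi^1 - \xi^2, u^1 - u^2\rangle \leq 0$ with $\xi^i \in \partial \mathcal E(u^i)$.

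The main obstacle, as expected, is the extension to general $u_0 \in H = \overline{D(\mathcal E)}$ when $\mathcal E(u_0) = +\infty$. Here I would approximate $u_0$ by $u_0^k \in D(\mathcal E)$ via the density hypothesis, solve for each initial datum as above, and then apply the contraction estimate $\|u^j(t) - u^k(t)\| \leq \|u_0^j - u_0^k\|$ (itself the same monotonicity computation used for uniqueness) to extract a Cauchy sequence whose limit serves as the desired $u$. The claim that $\mathcal E(u(t))<\infty$ for $t>0$, and the resulting regularity of $u$ stated in the proposition, follow from the Br\'ezis smoothing estimate $\|u_t(t)\| \leq C(u_0)/t$, obtained by differentiating the flow and testing against $tu_t(t)$ while exploiting convexity of $\mathcal E$ once more.
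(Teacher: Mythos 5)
The paper does not prove Proposition~\ref{4GT}; it is quoted as the classical K\=omura--Br\'ezis theorem with a direct citation to \cite{Br}. Your proposal is a correct sketch of the standard Moreau--Yosida regularization argument from that reference---the resolvent/Yosida setup, the $O(\sqrt{\lambda+\mu})$ Cauchy estimate, demiclosedness of the maximal monotone graph, the contraction argument for density of $D(\mathcal{E})$, and the Br\'ezis smoothing estimate---so it coincides with the approach the paper implicitly invokes.
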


As in \cite{AGS}, this solution satisfies the evolutionary variational inequality
\[
	\frac12 \frac{\mathrm{d}}{\mathrm{d}t} \|u-f\|^2_H \leq \mathcal{E}(f) - \mathcal{E}(u)
	\quad\text{for a.\,e.\ } t>0
\]
for any $f\in H$.
 Indeed, by definition, 
$u_t\in-\partial\mathcal{E}(u)$ is equivalent to saying
\[
	\frac12 \frac{\mathrm{d}}{\mathrm{d}t} \|u-f\|^2_H = (-u_t, f-u)_H \leq \mathcal{E}(f) - \mathcal{E}(u)
\]
for any $f\in H$.
The evolutionary variational inequality is not only an equivalent formulation of the gradient flow $u_t\in-\partial\mathcal{E}(u)$, but also apply to a gradient flow of a metric space by replacing $\|u-f\|_H$ by distance between $u$ and $f$; see \cite{AGS} for the theory.

To be able to actually find solutions to \eqref{def_grad_flow}, we need to characterize the subdifferential of the total variation in the space $D^{-1}=D^{-1}(\mathbb{R}^n)$.
The basic idea of the proof is a duality argument, which has been carried out in the case of $L^2$ subdifferentials.
In the case of $L^2$ setting, the idea goes back to the unpublished note of F.\ Alter and a detailed proof is given in \cite{ACM}.
Let $\mathcal{E}$ be a functional on a real Hilbert space $H$ equipped with an inner product $(\cdot,\cdot)_H$.
The main idea is to characterize the subdifferential $\partial\mathcal{E}$ by the polar $\mathcal{E}^0$ of $\mathcal{E}:H\to[-\infty,\infty]$ which is defined by
\begin{equation*}
	\mathcal{E}^0(v) := \sup\left\{(u,v)_H \bigm| u\in H,\ \mathcal{E}(u)\leq 1 \right\}
	= \sup\left\{(u,v)/\mathcal{E}(u) \bigm| u\in D(\mathcal{E}),\ \mathcal{E}(u)\neq 0 \right\},
\end{equation*}
where $D(\mathcal{E})=\left\{u\in H \bigm| |\mathcal{E}(u)|<\infty\right\}$.
We first recall a lemma \cite[Lemma 1.7]{ACM}.
\begin{lem} \label{ACM} 
	Let $\mathcal{E}$ be convex.
	Assume that $\mathcal{E}$ is positively one-homogeneous, i.\,e.,
	\[
	\mathcal{E}(\lambda u) = \lambda\mathcal{E}(u)
	\]
	for all $\lambda>0$, $u\in H$.
	Then, $v\in\partial\mathcal{E}(u)$ if and only if $\mathcal{E}^0(v)\leq 1$ and $(u,v)_H=\mathcal{E}(u)$.
\end{lem}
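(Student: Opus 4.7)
The statement splits into two implications, and for both the key tool is the positive one-homogeneity $\mathcal{E}(\lambda u) = \lambda \mathcal{E}(u)$ for $\lambda > 0$, which lets us rescale test points and match them to the definition of the polar $\mathcal{E}^0$.

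For the forward direction, assume $v \in \partial\mathcal{E}(u)$, i.e.\ $\mathcal{E}(w) \geq \mathcal{E}(u) + (v, w-u)_H$ for every $w \in H$. The plan is to first identify $(u,v)_H$ with $\mathcal{E}(u)$: testing with $w = 0$ gives $0 = \mathcal{E}(0) \geq \mathcal{E}(u) - (v,u)_H$, while testing with $w = 2u$ and using $\mathcal{E}(2u) = 2\mathcal{E}(u)$ yields $(v,u)_H \geq \mathcal{E}(u)$ wait the other way $2\mathcal{E}(u) \geq \mathcal{E}(u) + (v,u)_H$, so $(v,u)_H \leq \mathcal{E}(u)$. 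Combining, $(u,v)_H = \mathcal{E}(u)$. Substituting back into the subdifferential inequality reduces it to $(v, w)_H \leq \mathcal{E}(w)$ for all $w \in H$. Taking the supremum over $w$ with $\mathcal{E}(w) \leq 1$ then gives $\mathcal{E}^0(v) \leq 1$.

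For the converse, assume $\mathcal{E}^0(v) \leq 1$ and $(u,v)_H = \mathcal{E}(u)$. Using the identity $(u,v)_H = \mathcal{E}(u)$, the desired subdifferential inequality collapses to $(v,w)_H \leq \mathcal{E}(w)$ for every $w \in H$. I would verify this in three cases. When $\mathcal{E}(w)$ is finite and positive, rescaling $\tilde w = w/\mathcal{E}(w)$ yields $\mathcal{E}(\tilde w) = 1$ by one-homogeneity, so $(\tilde w, v)_H \leq \mathcal{E}^0(v) \leq 1$, which after clearing the denominator is exactly the claim. When $\mathcal{E}(w) = +\infty$ the inequality is trivial. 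The genuinely subtle case is $\mathcal{E}(w) = 0$: here one-homogeneity gives $\mathcal{E}(\lambda w) = 0 \leq 1$ for every $\lambda > 0$, so $\lambda (w,v)_H \leq \mathcal{E}^0(v) \leq 1$; letting $\lambda \to \infty$ forces $(w,v)_H \leq 0 = \mathcal{E}(w)$.

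The main obstacle, such as it is, lies in this last case, where a naive normalization argument fails because we cannot divide by $\mathcal{E}(w)$; the fix is to exploit the scaling invariance of $\{\mathcal{E} \leq 1\}$ under $w \mapsto \lambda w$ and send $\lambda \to \infty$. Apart from this, the argument is a direct computation and no deeper ingredient than the definitions of $\partial\mathcal{E}$ and $\mathcal{E}^0$ is needed; in particular one does not need lower semicontinuity or density of $D(\mathcal{E})$, so the statement applies in the generality needed later for $\mathcal{E} = TV$ on $H = D^{-1}$.
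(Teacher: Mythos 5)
Your proof is correct and complete; note that the paper does not actually prove this lemma but cites it as \cite[Lemma 1.7]{ACM}, so your argument is the standard one from convex analysis and there is no competing proof in the paper to compare against. Both directions are handled properly, and you are right to single out the case $\mathcal{E}(w)=0$ in the converse as the place where a naive normalization breaks down; the rescaling trick $\lambda\to\infty$ is exactly the right fix. One small caveat: your forward direction opens with ``$0=\mathcal{E}(0)\geq\mathcal{E}(u)-(v,u)_H$'', which silently assumes $\mathcal{E}(0)=0$. Positive one-homogeneity only forces $\mathcal{E}(0)\in\{0,+\infty\}$ a priori, and nonnegativity is also being used implicitly throughout (as it is in the paper's own definition of $\mathcal{E}^0$). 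In the intended setting ($\mathcal{E}=TV$, or more generally a proper, nonnegative, positively one-homogeneous convex functional) this holds, so there is no real gap; if you want to avoid invoking $\mathcal{E}(0)=0$ at all, test the subdifferential inequality with $w=\tfrac12 u$ instead of $w=0$, which yields $(v,u)_H\geq\mathcal{E}(u)$ directly. Your closing observation that neither lower semicontinuity nor density of $D(\mathcal{E})$ enters the argument is also accurate and worth keeping.
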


\begin{rem} \label{R00} 
	By general theory of convex functionals, we know that
	\[
	(\mathcal{E}^0)^0 = \mathcal{E}
	\]
	if $\mathcal{E}$ is a non-negative, lower semicontinuous, convex, positively one-homogeneous functional \cite[Proposition 1.6]{ACM}.
\end{rem} 

This property is essential for the proof of 
\begin{thm} \label{DC}
	Let $\Psi\colon D^{-1} \to [0, \infty]$ by defined by
	\[
	\Psi(v) = \inf \left\{ \|Z\|_\infty \bigm|
	v = \Delta\operatorname{div}Z,\ Z \in L^\infty(\mathbb{R}^n),\ \operatorname{div}Z \in D^1_0 \right\}.
	\]
	Then $(TV)^0=\Psi$.
\end{thm}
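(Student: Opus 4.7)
The plan is to compute the polar $\Psi^0$, show $\Psi^0=TV$, and then apply the bipolar identity from Remark \ref{R00} to conclude $(TV)^0=(\Psi^0)^0=\Psi$. Applying Remark \ref{R00} to $\Psi$ requires checking that $\Psi$ is non-negative, convex, positively one-homogeneous, and lower semicontinuous on $D^{-1}$. The first three properties are immediate from the definition of $\Psi$ and linearity of $\Delta\operatorname{div}$.

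To compute $\Psi^0$, note that for $v=\Delta\operatorname{div}Z$ with $\operatorname{div}Z\in D_0^1$ one has $(-\Delta)^{-1}v=-\operatorname{div}Z$, so $(u,v)_{D^{-1}}=\langle u,(-\Delta)^{-1}v\rangle=-\langle u,\operatorname{div}Z\rangle$. After replacing $Z$ by $-Z$, this yields
\[\Psi^0(u)=\sup\{\langle u,\operatorname{div}Z\rangle:Z\in L^\infty(\mathbb{R}^n),\ \|Z\|_\infty\le1,\ \operatorname{div}Z\in D_0^1\}.\]
Restricting the supremum to $\psi\in X_1$ (admissible since $\operatorname{div}\psi\in C_c^\infty\subset D_0^1$ and $\int\Delta\operatorname{div}\psi=0$ ensures $\Delta\operatorname{div}\psi\in D^{-1}$ also when $n\le2$) gives $\Psi^0(u)\ge TV(u)$. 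For the opposite inequality, fix admissible $Z$ and apply Lemma \ref{approx} to obtain $u_k\in C_{c,av}^\infty$ with $u_k\to u$ in $D^{-1}$ and $TV(u_k)\to TV(u)$. Integration by parts yields $\langle u_k,\operatorname{div}Z\rangle=-\int\nabla u_k\cdot Z$, so $|\langle u_k,\operatorname{div}Z\rangle|\le\|Z\|_\infty TV(u_k)\le TV(u_k)$. Continuity of the duality pairing in $u$ at fixed $\operatorname{div}Z\in D_0^1$ allows passage to the limit, giving $|\langle u,\operatorname{div}Z\rangle|\le TV(u)$, hence $\Psi^0(u)\le TV(u)$.

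The main obstacle is the lower semicontinuity of $\Psi$. Suppose $v_n\to v$ in $D^{-1}$ with $\Psi(v_n)\to L<\infty$, and choose $Z_n\in L^\infty$ with $v_n=\Delta\operatorname{div}Z_n$, $\operatorname{div}Z_n\in D_0^1$, and $\|Z_n\|_\infty\le\Psi(v_n)+1/n$. By Banach--Alaoglu a subsequence converges in the weak-$*$ topology of $L^\infty$ to some $Z$ with $\|Z\|_\infty\le L$. Since $-\Delta\colon D_0^1\to D^{-1}$ is an isometry, $\|\operatorname{div}Z_n\|_{D_0^1}=\|v_n\|_{D^{-1}}$ is uniformly bounded, so along a further subsequence $\operatorname{div}Z_n\rightharpoonup\phi$ weakly in $D_0^1$ for some $\phi\in D_0^1$. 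Testing against $C_c^\infty$ functions identifies $\phi=\operatorname{div}Z$, hence $\operatorname{div}Z\in D_0^1$ and $v=\Delta\operatorname{div}Z$ in $D^{-1}$, yielding $\Psi(v)\le\|Z\|_\infty\le L$. Combining $\Psi^0=TV$ with the verified hypotheses of Remark \ref{R00} for $\Psi$ concludes $(TV)^0=\Psi^{00}=\Psi$. The delicate point is upgrading the weak-$*$ limit of the $Z_n$ to an admissible competitor, i.\,e.\ ensuring $\operatorname{div}Z$ stays in $D_0^1$; the isometric identity $\|\operatorname{div}Z_n\|_{D_0^1}=\|v_n\|_{D^{-1}}$ is the tool that makes this work.
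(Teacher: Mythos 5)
Your proof is correct and takes essentially the same route as the paper: both arguments hinge on Lemma~\ref{approx} for the nontrivial estimate (your inequality $\Psi^0\le TV$ is, under the bipolar relation for $TV$, the same as the paper's direct proof of $TV^0\le\Psi$ via $(u_k,v)_{D^{-1}}=\int Z\cdot\nabla u_k$), both restrict the supremum to compactly supported fields for the other inequality, and both invoke the bipolar identity $(\Psi^0)^0=\Psi$. The organizational difference is cosmetic — you prove $\Psi^0=TV$ in full and apply the bipolar once, while the paper proves $TV^0\le\Psi$ directly and applies the bipolar only for $TV^0\ge\Psi$.

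The one genuine addition is your explicit verification that $\Psi$ is lower semicontinuous: the paper merely asserts this "by definition" in Remark~\ref{DC1}(i), but $\Psi$ is an infimum and lsc is not automatic. Your argument — extract a weak-$*$ $L^\infty$ limit $Z$ of the near-minimizers $Z_n$, use the isometry $\|\operatorname{div}Z_n\|_{D_0^1}=\|v_n\|_{D^{-1}}$ to get a weak $D_0^1$ limit $\phi$, identify $\phi=\operatorname{div}Z$ by testing, and pass to the limit in $v_n=\Delta\operatorname{div}Z_n$ — is correct and fills a real gap. (Incidentally, the same compactness argument also justifies Remark~\ref{DC1}(ii), attainment of the infimum, which the paper also leaves to the reader.) Two small points worth noting for precision: the test functions should be taken in $C^\infty_{c,av}$ when $n\le2$, since elements of $D_0^1$ are defined only up to a constant in that range and the identification $\phi=[\operatorname{div}Z]$ is an equality of equivalence classes; and one should first pass to a subsequence along which $\Psi(v_n)\to\liminf\Psi(v_n)$ before the choice of $Z_n$, so that $\Psi(v_n)<\infty$ for all indices used.
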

\begin{rem} \label{DC1}
	\begin{enumerate}
		\item[(i)] By definition, $\Psi$ is a convex, lower semi-continuous, positively one-homogeneous function, so $(\Psi^0)^0=\Psi$.
		\item[(i\hspace{-1.5pt}i)] if $\Psi(v)<\infty$, the infimum is attained.
		Theorem \ref{DC} together with Lemma \ref{ACM} implies the following characterization of the subdifferential of $TV$.
	\end{enumerate}
\end{rem}
\begin{thm} \label{Ch}
	An element $v\in D^{-1}$ belongs to $\partial TV(u)$ if and only if there is $Z\in L^\infty(\mathbf{R}^n)$ with $\operatorname{div}Z\in D^1_0$ such that
	\begin{enumerate}
		\item[(i)] $|Z| \leq 1$
		\item[(i\hspace{-1.5pt}i)] $v = \Delta \operatorname{div}Z$
		\item[(i\hspace{-1.5pt}i\hspace{-1.5pt}i)] $-\langle u, \operatorname{div}Z \rangle = TV(u)$.
	\end{enumerate}
\end{thm}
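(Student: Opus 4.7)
The plan is to deduce Theorem \ref{Ch} by a direct combination of Lemma \ref{ACM} with Theorem \ref{DC}, followed by a translation of the abstract Hilbert inner product on $D^{-1}$ into the canonical pairing $\langle\,\cdot\,,\,\cdot\,\rangle$.

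First, I verify the hypotheses of Lemma \ref{ACM} for $\mathcal E = TV$, $H = D^{-1}$. The functional $TV$ is convex as a supremum of linear functionals on $D^{-1}$, positively one-homogeneous directly from its definition, and lower semicontinuous on $D^{-1}$ as observed in Section \ref{S2}. Applying Lemma \ref{ACM}, I get that $v\in\partial TV(u)$ if and only if $(TV)^0(v)\leq 1$ and $(u,v)_{D^{-1}} = TV(u)$.

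Next I unpack $(TV)^0(v)\le 1$ via Theorem \ref{DC}, which identifies $(TV)^0 = \Psi$. Since $\Psi$ is a lower semicontinuous, positively one-homogeneous functional, $\Psi(v)\leq 1$ means that the infimum defining $\Psi(v)$ is finite, hence attained (Remark \ref{DC1}(ii)), by a field $Z\in L^\infty(\mathbb R^n)$ with $\operatorname{div}Z\in D^1_0$, $\|Z\|_\infty\le 1$, and $v=\Delta\operatorname{div}Z$. This yields conditions (i) and (ii) of the theorem.

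It remains to rewrite the Hilbert identity $(u,v)_{D^{-1}}=TV(u)$ as the canonical pairing relation (iii). Recall that $-\Delta\colon D^1_0\to D^{-1}$ is an isometric isomorphism, so every $u\in D^{-1}$ is uniquely $u=-\Delta\tilde u$ with $\tilde u\in D^1_0$, and
\[
\langle u,\varphi\rangle = (\tilde u,\varphi)_{D^1_0}\quad\text{for every }\varphi\in D^1_0.
\]
For the $Z$ chosen above, $v=\Delta\operatorname{div}Z$ gives $\tilde v := (-\Delta)^{-1}v = -\operatorname{div}Z\in D^1_0$. Therefore
\[
(u,v)_{D^{-1}} = (\tilde u,\tilde v)_{D^1_0} = (\tilde u,-\operatorname{div}Z)_{D^1_0} = -\langle u,\operatorname{div}Z\rangle,
\]
so $(u,v)_{D^{-1}}=TV(u)$ is exactly condition (iii). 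All implications are reversible, which settles both directions simultaneously.

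I do not expect a genuine obstacle in this step: once Lemma \ref{ACM} and Theorem \ref{DC} are in hand, the argument is essentially bookkeeping. The real work lies in Theorem \ref{DC}, whose proof (via the duality argument of \cite{ACM} adapted to $D^{-1}$) requires the strict-density approximation of Lemma \ref{approx}; here I am only harvesting its consequences.
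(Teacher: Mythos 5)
Your proof is correct and follows essentially the same route as the paper's: combine Lemma \ref{ACM} with Theorem \ref{DC}, use Remark \ref{DC1}(ii) to obtain an optimal $Z$, and rewrite $(u,v)_{D^{-1}}$ as $-\langle u,\operatorname{div}Z\rangle$ via the isometry $-\Delta\colon D^1_0\to D^{-1}$. Your explicit verification of the hypotheses of Lemma \ref{ACM} is a small, sensible addition that the paper leaves implicit.
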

\begin{proof}
	By Lemma \ref{ACM} and Theorem \ref{DC}, 
	\[
	v \in \partial TV(u)\ \iff\ \Psi(v) \leq 1 \text{ and } (v,u)_{D^{-1}} = TV(u).
	\]
	The property $\Psi(v)\leq1$ together with Remark \ref{DC1}(i\hspace{-1pt}i) implies (i), (i\hspace{-1pt}i) and $\operatorname{div}Z\in D^1_0$.
	\[
	(v,u)_{D^{-1}} = \left\langle u,(-\Delta)^{-1}v \right\rangle = -\langle u,\operatorname{div}Z \rangle.
	\]
	It is not difficult to check the converse.
\end{proof}
\begin{proof}[Proof of Theorem \ref{DC}]
	The inequality $TV^0\leq\Psi$:
	
	We take $v\in D^{-1}$ with $\Psi(v)<\infty$.
	By Remark \ref{DC1}(i\hspace{-1pt}i), there is $Z\in L^\infty(\mathbb{R}^n)$ with $v=\Delta\operatorname{div}Z$ with $\operatorname{div}Z\in D^1_0$ such that $\Psi(v)=\|Z\|_\infty$.
	By Lemma \ref{approx}, there is $u_k\in C^\infty_{c,av}$ such that $TV(u_k)\to TV(u)$, $u_k\to u$ in $D^{-1}$.
	We observe that
	\begin{align*}
		(u_k, v)_{D^{-1}} &=  \left\langle u_k,(-\Delta)^{-1}v \right\rangle = -\langle u_k,\operatorname{div}Z \rangle \\
		&= \int_{\mathbb{R}^n} Z \cdot \nabla u_k
		\leq \|Z\|_\infty TV(u_k).
	\end{align*}
	Sending $k\to\infty$, we conclude that
	\[
	(u,v)_{D^{-1}} \leq \|Z\|_\infty \quad\text{for all}\quad u \in D^{-1}
	\quad\text{with}\quad TV(u) \leq 1.
	\]
	By definition of $\Psi$, this implies $TV^0\leq\Psi$.
	
	The inequality $\Psi\leq TV^0$:
	
	By definition,
	\begin{align*}
		TV(u) &= \sup \left\{ \langle u,-\operatorname{div}Z \rangle \bigm| Z \in C^\infty_c(\mathbb{R}^n),\ |Z| \leq 1 \right\} \\
		&= \sup \left\{ \frac{\langle u,-\operatorname{div}Z \rangle}{\|Z\|_\infty} \biggm| Z \in C^\infty_c(\mathbb{R}^n),\ Z \neq 0 \right\}.
	\end{align*}
	Since
	\[
	\langle u, -\operatorname{div}Z \rangle
	= \left\langle u, (-\Delta)^{-1} \Delta\operatorname{div}Z \right\rangle
	= ( u, \Delta\operatorname{div}Z )_{D^{-1}},
	\]
	we observe that
	\begin{align*}
		TV(u) &= \sup \left\{ \frac{( u,\Delta\operatorname{div}Z )_{D^{-1}}}{\|Z\|_\infty} \biggm| Z \in C^\infty_c(\mathbb{R}^n),\ Z \neq 0 \right\} \\
		&\leq \sup \left\{  \frac{( u,\Delta\operatorname{div}Z )_{D^{-1}}}{\Psi(\Delta\operatorname{div}Z)} \biggm| Z \in C^\infty_c(\mathbb{R}^n),\ \Psi(\Delta\operatorname{div}Z) \neq 0 \right\} \\
		&\leq \Psi^0(u). 
	\end{align*}
	This implies that $TV^0\geq(\Psi^0)^0=\Psi$.
\end{proof}

Now that the subdifferential of $TV$ in $D^{-1}$ is  calculated, we are able to justify an explicit definition of a solution proposed in \cite{GKM}.
\begin{thm} \label{4UE}
Assume that $u\in C\left([0,\infty[\,, D^{-1}\right)$.
 Then $u$ is a solution of $u_t\in-\partial_{D^{-1}}TV(u)$ with $u_0=u(0)$ in the sense of Proposition \ref{4GT} if and only if there exists $Z\in L^\infty\bigl(]0,\infty[\times\mathbb{R}^n\bigr)$ satisfying
\[
	\operatorname{div}Z \in L^2 \left(\delta, \infty; D^1_0 (\mathbb{R}^n)\right)
	\quad\text{for any}\quad \delta > 0
\]
such that for a.\,e.\ $t\in]0,\infty[$ there holds
 \[ u_t = -\Delta\operatorname{div}Z \quad\text{in}\quad D^{-1} (\mathbb{R}^n), \]
\[ |Z| \leq 1 \quad \mathcal{L}^n\text{-a.\,e.} \]
and
\[
	\langle u, \operatorname{div}Z \rangle = -TV(u).
\]
(If $TV(u_0)<\infty$, $\delta=0$ is allowed.)
\end{thm}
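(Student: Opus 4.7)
The strategy is to reduce the statement to the pointwise characterization of $\partial_{D^{-1}}TV$ obtained in Theorem \ref{Ch} and to handle the dependence in $t$ by a measurable selection argument; the $L^2$-in-time regularity then follows from the abstract estimate in Proposition \ref{4GT} combined with the fact that $-\Delta\colon D^1_0\to D^{-1}$ is an isometric isomorphism.

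For the ``only if'' direction, let $u$ be a solution in the sense of Proposition \ref{4GT}. Then $u_t(t)\in-\partial_{D^{-1}}TV(u(t))$ for a.\,e.\ $t>0$, so by Theorem \ref{Ch} the set
\[
S(t)=\left\{Z\in L^\infty(\mathbb{R}^n;\mathbb{R}^n)\colon |Z|\leq 1\text{ a.\,e.},\ \operatorname{div}Z\in D^1_0,\ u_t(t)=-\Delta\operatorname{div}Z,\ \langle u(t),\operatorname{div}Z\rangle=-TV(u(t))\right\}
\]
is non-empty for a.\,e.\ $t$. A standard measurable selection theorem (Kuratowski--Ryll-Nardzewski, say, applied after endowing the closed unit ball of $L^\infty(\mathbb{R}^n;\mathbb{R}^n)$ with its weak-$*$ topology, which is metrizable because $L^1$ is separable) yields a jointly measurable $Z\colon\,]0,\infty[\times\mathbb{R}^n\to\mathbb{R}^n$ with $Z(t,\cdot)\in S(t)$ a.\,e. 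The pointwise bound $|Z(t,x)|\leq 1$ gives $Z\in L^\infty(]0,\infty[\times\mathbb{R}^n)$. Since $-\Delta\colon D^1_0\to D^{-1}$ is an isometry, $\|\operatorname{div}Z(t)\|_{D^1_0}=\|u_t(t)\|_{D^{-1}}$, and the $L^2$ estimate from Proposition \ref{4GT} translates directly into $\operatorname{div}Z\in L^2(\delta,\infty;D^1_0)$ for every $\delta>0$, with $\delta=0$ admissible when $TV(u_0)<\infty$.

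For the ``if'' direction, given $Z$ with the three listed properties, Theorem \ref{Ch} applied pointwise in $t$ yields $-u_t(t)=\Delta\operatorname{div}Z(t)\in\partial_{D^{-1}}TV(u(t))$ for a.\,e.\ $t$. The assumed regularity of $\operatorname{div}Z$ together with the same isometry gives $u_t\in L^2_{loc}(]0,\infty[;D^{-1})$, whence $u$ is absolutely continuous on compact subintervals of $]0,\infty[$; continuity at $t=0$ is part of the hypothesis $u\in C([0,\infty[,D^{-1})$. Hence $u$ satisfies all requirements of Proposition \ref{4GT} and, by uniqueness, coincides with the solution produced by the abstract theory.

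The only non-routine step is the measurable selection of $Z$, but it is essentially mechanical: the multimap $t\mapsto S(t)$ has non-empty, weak-$*$ closed, convex values in the Polish unit ball of $L^\infty(\mathbb{R}^n;\mathbb{R}^n)$, and the maps $t\mapsto u(t)$, $t\mapsto u_t(t)$ and $t\mapsto TV(u(t))$ are all measurable, so a selection theorem applies without any further effort.
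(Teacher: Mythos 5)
Your proposal is correct in outline and arrives at the same conclusion via a genuinely different route, so let me compare the two and flag where you are waving your hands.

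The paper avoids abstract selection theory entirely. Its key tool is Lemma~\ref{riemann}, a Besicovitch-covering refinement of a Lebesgue point argument (modelled on Lemma~A.8 of \cite{ACM}): one produces a countable partition $\{I_k\}$ of $]0,\infty[$ into intervals centered at good times $t_k$ at which the inclusion $u_t(t_k)\in-\partial_{D^{-1}}TV(u(t_k))$ holds, defines $Z^\varepsilon(t)=Z_k$ on $I_k$ with $Z_k$ given by Theorem~\ref{Ch}, and then extracts a weak-$*$ limit $Z$ in $L^\infty(]0,\infty[\times\mathbb{R}^n)$, passing to the limit in each of the three defining identities. The limit object is automatically jointly measurable because it is a weak-$*$ limit of jointly measurable (indeed piecewise-constant-in-time) fields. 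You instead invoke Kuratowski--Ryll-Nardzewski on the multimap $t\mapsto S(t)$. This is shorter on the page, but the phrase ``a selection theorem applies without any further effort'' elides the two non-routine steps that the paper's construction handles for free. First, weak measurability of $t\mapsto S(t)$ is precisely what has to be proved, not a by-product of the pointwise measurability of $t\mapsto u(t)$, $t\mapsto u_t(t)$ and $t\mapsto TV(u(t))$; one has to check, say, that the graph $\{(t,Z)\}$ is Borel (or analytic), e.\,g.\ by expressing $S(t)$ as an intersection of level sets of countably many Carath\'eodory maps $(t,Z)\mapsto\int Z\cdot\nabla\varphi_j+\int w(t)\varphi_j$ with $w(t)=(-\Delta)^{-1}u_t(t)$ and $\{\varphi_j\}$ a countable dense family of test functions, plus the closed constraint $|Z|\leq 1$; your observation that $S(t)$ has weak-$*$ closed convex values is correct (all elements of $S(t)$ even share the same $\operatorname{div}Z=w(t)$, by injectivity of $-\Delta$ on $D^1_0$), but closedness alone is not measurability. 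Second, Kuratowski--Ryll-Nardzewski yields a map $t\mapsto Z(t,\cdot)$ that is measurable with values in the weak-$*$ Polish ball of $L^\infty$; upgrading this to an element of $L^\infty(]0,\infty[\times\mathbb{R}^n;\mathbb{R}^n)$, i.\,e.\ joint Lebesgue measurability in $(t,x)$, is standard but not automatic and deserves a sentence. Apart from these gaps, the remainder of your argument---the use of Theorem~\ref{Ch} in both directions, the isometry $\|\operatorname{div}Z(t)\|_{D^1_0}=\|u_t(t)\|_{D^{-1}}$ to transfer the $L^2$-in-time bound from Proposition~\ref{4GT}, and the reduction of the ``if'' direction to the pointwise characterization---is sound and matches the paper's logic.
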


	The Theorem essentially follows from Theorem \ref{Ch}. We only need to justify that a Cahn-Hoffman vector field $Z$ defined separately for every time instance by Theorem \ref{Ch} can be chosen to be jointly measurable, i.\,e.\ $Z\in L^\infty\bigl(]0,\infty[\times\mathbb{R}^n\bigr)$. As in the second-order case \cite[section 2.4]{ACM}, this can be done by recalling that Bochner functions can be well approximated by piecewise constant functions. Since our situation is slightly different, let us include the argument for completeness. We begin with a lemma which is a version of \cite[Lemma A.8]{ACM} suited to our needs. 
	
	\begin{lem} \label{riemann} 
	Let $v \in L^1_{loc}(]0,\infty[, X)$, where $X$ is a Banach space and let $N \subset ]0,\infty[$ have Lebesgue measure $0$. Then for each $\varepsilon > 0$ there exists a countable family $\mathcal{G}$ of disjoint closed intervals $I_k = \overline{B(t_k, r_k)}$, $k \in \mathbb{N}$, such that $t_k$ is a Lebesgue point for $v$, $t_k \not \in N$, 
\[ \mathcal{L}^1 \left(]0,\infty[\setminus \bigcup_{k \in \mathbb{N}} I_k\right) = 0 \]
	and 
	\[\int_0^\infty \|v - v^\varepsilon \|_X \leq \varepsilon, \quad \text{where } v^\varepsilon (t) = v(t_k) \quad \text{for } t \in I_k, \ k \in \mathbb{N}. \]
	\end{lem}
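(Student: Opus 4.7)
The approach is to combine the Lebesgue differentiation theorem for Bochner integrable functions with a Vitali covering argument applied to a countable partition of $]0,\infty[$, distributing the error budget $\varepsilon$ geometrically across the pieces.

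Since $v \in L^1_{loc}(]0,\infty[, X)$, the Bochner version of the Lebesgue differentiation theorem (which applies because $v$ is essentially separably valued on every compact subinterval) ensures that the set $L$ of points $t$ satisfying
\[\lim_{r \to 0^+} \frac{1}{2r}\int_{t-r}^{t+r} \|v(s) - v(t)\|_X\,ds = 0\]
is of full measure in $]0,\infty[$. Setting $L' := L \setminus N$, the set $L'$ is still of full measure, and all centers $t_k$ will be chosen from $L'$.

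Next, I partition $]0,\infty[$, up to the negligible set of positive integers, into the open unit intervals $J_m := ]m, m+1[$, $m \in \mathbb{N}\cup\{0\}$, and set $\eta_m := \varepsilon\, 2^{-m-1}$. For every $t \in L' \cap J_m$, the Lebesgue point property allows me to choose $r_0(t) > 0$ so small that $\overline{B(t, r_0(t))} \subset J_m$ and
\[\frac{1}{2r}\int_{t-r}^{t+r} \|v(s) - v(t)\|_X\,ds \leq \eta_m \quad\text{for every } r \in\, ]0, r_0(t)].\]
Then $\mathcal F_m := \{\overline{B(t,r)} : t \in L' \cap J_m,\ 0 < r \leq r_0(t)\}$ is a Vitali covering of $L' \cap J_m$. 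By the classical Vitali covering theorem, I extract a countable, pairwise disjoint subfamily $\mathcal G_m = \{I^m_k\}_k$, with $I^m_k = \overline{B(t^m_k, r^m_k)}$, still covering $L' \cap J_m$ up to Lebesgue measure zero. Because the $J_m$ are mutually disjoint, the combined family $\mathcal G := \bigcup_m \mathcal G_m$ is pairwise disjoint and satisfies $\mathcal L^1\bigl(]0,\infty[ \setminus \bigcup_{I \in \mathcal G} I\bigr) = 0$.

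Defining $v^\varepsilon$ piecewise by $v^\varepsilon(t) := v(t^m_k)$ on $I^m_k$ (arbitrarily on the remaining null set), the oscillation bound on each ball together with disjointness within each $J_m$ gives
\[\int_0^\infty \|v - v^\varepsilon\|_X\,dt = \sum_{m,k} \int_{I^m_k} \|v(t) - v(t^m_k)\|_X\,dt \leq \sum_m \eta_m \sum_k |I^m_k| \leq \sum_m \eta_m = \varepsilon,\]
as required. The only non-routine ingredient is the Lebesgue differentiation theorem for Bochner integrable functions, which I would invoke as a standard fact; everything else is a direct Vitali covering argument combined with the geometric allocation of the error across the unit intervals.
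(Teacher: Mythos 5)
Your proof is correct, and it departs from the paper's argument in a genuinely interesting way. Both you and the paper begin from the Lebesgue differentiation theorem for Bochner-integrable functions, and both need some device to control the error over the unbounded domain $]0,\infty[$. The paper does this continuously: it builds a single Vitali cover of $]0,\infty[\setminus N$ with the pointwise oscillation bound decaying like $\varepsilon e^{-t}$, then invokes the Besicovitch covering theorem with respect to the weighted measure $\mathrm{d}\mu(t) = e^{-t}\,\mathrm{d}t$, so that the final estimate $\sum_k 2r_k e^{-t_k} \leq \int_0^\infty e^{-t}\,\mathrm{d}t$ localizes automatically. You discretize instead: partition $]0,\infty[$ into the unit intervals $J_m$, allocate a geometric error budget $\eta_m = \varepsilon 2^{-m-1}$ to each, choose the oscillation radius small enough to stay inside $J_m$, and apply the classical Vitali covering theorem separately on each piece. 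Your route is somewhat more elementary (plain Vitali rather than Besicovitch with a non-Lebesgue reference measure) and has the virtue that disjointness of the combined family is immediate from the disjointness of the $J_m$, whereas the paper gets it from the covering theorem itself. The paper's weighted single-cover version is slightly slicker but hides the same bookkeeping inside the choice of $\mu$. Either is a complete and correct proof.
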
 
	
	\begin{proof}
		Referring e.\,g.\ to \cite[p.\ 140]{Br}, $\mathcal{L}^1$-a.\,e.\ point $t\in ]0, \infty[$ is a \emph{Lebesgue point} for $v$, i.\,e.\ 
		\[\lim_{h \to 0^+} \frac{1}{2h} \int_{t-h}^{t+h} \|v - v(t)\|_X = 0.\]
		Let $A$ be the set of Lebesgue points of $v$ contained in $]0,\infty[\setminus N$ and let us take 
		\[\mathcal{F} = \Bigg\{\overline{B(t,r)}\ \Bigg|\ t \in A,\ r< \min(\varepsilon,t),\ \frac{1}{2r} \int_{t-r}^{t+r} \|v - v(t)\|_X \leq \varepsilon e^{-t} \Bigg\}.\]
		Using Besicovitch covering theorem \cite[Corollary 1 on p.\ 35]{EG} with $U = ]0, \infty[$ and (for example) $\mathrm{d}\mu(t) = e^{-t}\, \mathrm{d}t$, we obtain a candidate for the family $\mathcal{G}$. We check that indeed 
		\[\int_0^\infty \|v - v^\varepsilon\|_X = \sum_{k \in \mathbb{N}} \int_{I_k}\|v- v(t_k)\|_X \leq \varepsilon \sum_{k \in \mathbb{N}} 2r_k \, e^{-t_k} \leq \varepsilon \int_0^\infty e^{-t} \,\mathrm{d}t = \varepsilon.\]
		 
	\end{proof} 

\begin{proof} [Proof of Theorem \ref{4UE}]
	Applying Lemma \ref{riemann} to $u_t$, for each $\varepsilon > 0$ we obtain a partition of $]0,\infty[$ (up to a set of Lebesgue measure $0$) into disjoint closed intervals $I_k = \overline{B(t_k, r_k)}$, $k \in \mathbb{N}$ such that
	\begin{equation} \label{tk_good} 
		u_t(t_k) \in - \partial_{D^{-1}} TV(u(t_k)) \quad \text{for } k \in \mathbb{N}
	\end{equation} 
	and 
		\[\int_0^\infty \|u_t - v^\varepsilon\|_{D^{-1}} \leq \varepsilon, \quad \text{where } v^\varepsilon(t) = u_t(t_k) \quad \text{for } t \in I_k, \ k \in \mathbb{N}.\] 
	By \eqref{tk_good} and Theorem \ref{Ch}, for $k \in \mathbb{Z}$
	 there exist $Z_k \in L^\infty(\mathbb{R}^n)$ such that 
	\[|Z_k| \leq 1,\quad u_t(t_k) = - \Delta\operatorname{div} Z_k, \quad - \langle u(t_k), \operatorname{div} Z_k \rangle = TV(u(t_k)). \]
	Further denoting 
	\[u^\varepsilon(t) = u(t_k), \quad Z^\varepsilon(t) = Z_k \quad \text{for } t \in I_k, \ k \in \mathbb{N},\]
	we have  
	\[|Z^\varepsilon| \leq 1,\quad v^\varepsilon = - \Delta\operatorname{div} Z^\varepsilon, \quad - \langle v^\varepsilon,\operatorname{div} Z^\varepsilon \rangle = TV(u^\varepsilon) \quad \text{a.\,e.\ in } [0, \infty[. \]  
	We immediately deduce that there exists $Z \in L^\infty([0,\infty[\times \mathbb{R}^n)$ with $|Z|\leq 1$ a.\,e.\ and a subsequence $Z^{\varepsilon_j}$ such that $Z^{\varepsilon_j}$ converges to $Z$ weakly-* in $Z \in L^\infty([0,\infty[\times \mathbb{R}^n)$. Moreover, for any $\varphi \in C^\infty_c([0, \infty[\times \mathbb{R}^n)$,  
	\[ \int_0^\infty \langle v^{\varepsilon_j}, \varphi\rangle = \int_0^\infty \int_{\mathbb{R}^n} \nabla \operatorname{div} Z^{\varepsilon_j} \cdot \nabla \varphi  = \int_0^\infty \int_{\mathbb{R}^n}  Z^{\varepsilon_j} \cdot \nabla \Delta \varphi \to \int_0^\infty \int_{\mathbb{R}^n}  Z \cdot \nabla \Delta \varphi, \]
	at least along a subsequence. Since on the other hand 
	\[\int_0^\infty \langle v^\varepsilon, \varphi\rangle \to \int_0^\infty \langle u_t, \varphi\rangle,\]
	we infer that $u_t = - \Delta \operatorname{div} Z$ and in particular $\operatorname{div} Z^{\varepsilon_j} \to \operatorname{div} Z$ in $L^1_{loc}(]0, \infty[, D^1_0)$. Finally, we observe that $u^\varepsilon \to u$ in $L^\infty_{loc}(]0, \infty[, D^{-1})$. Moreover, since $t \to TV(u(t))$ is non-increasing, we have $TV(u^\varepsilon) \to TV(u)$ in $L^1_{loc}(]0, \infty[)$. Therefore, for any $\varphi \in C_c(]0, \infty[)$, 
	\[\int_0^\infty \langle u^{\varepsilon_j},\operatorname{div} Z^{\varepsilon_j}\rangle \varphi \to \int_0^\infty \langle u, \operatorname{div} Z \rangle \varphi, \quad \int_0^\infty TV(u^\varepsilon) \varphi \to \int_0^\infty TV(u) \varphi. \]

\end{proof}

\section{Extension to $E^{-1}$} \label{SSD} 

Unfortunately, in the case $n\leq2$, the characteristic function $\mathbf{1}_A$ of a set $A$ of positive measure is not in $D^{-1}$ since $\int \mathbf{1}_A \neq0$.
 We shall define a new space containing $\mathbf{1}_A$ as follows.
 We take a function $\psi\in L^2(\mathbb{R}^n)$ with compact support such that $\int_{\mathbb{R}^n}\psi=1$.
 We introduce a vector space
\[
	E^{-1}_\psi = \left\{ w+c\psi \bigm|
	w \in D^{-1}(\mathbb{R}^n),\ c \in \mathbb{R} \right\}.
\]
This space is independent of the choice of $\psi$.
 Indeed, let $\psi_i \in L^2(\mathbb{R}^n)$ be compactly supported and $\int \psi_i =1$ ($i=1,2$).
 An element $w+c\psi_1\in E^{-1}_{\psi_1}$ can be rewritten as
\[
	w + c\psi_1 = w + c(\psi_1-\psi_2) + c\psi_2.
\]
The next lemma implies $q=c(\psi_1-\psi_2)\in D^{-1}(\mathbb{R}^n)$ since $\int q =0$.
 We then conclude that $w+c\psi_1\in E^{-1}_{\psi_2}$.
\begin{lem} \label{4DS}
Assume that $n\leq2$.
 A compactly supported function $q\in L^2(\mathbb{R}^n)$ belongs to $D^{-1}(\mathbb{R}^n)$ if and only if $\int_{\mathbb{R}^n}q=0$.
\end{lem}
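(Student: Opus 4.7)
The plan is to prove the two implications separately. For the ``if'' direction, suppose $q \in L^2(\mathbb{R}^n)$ is supported in $B_R$ with $\int q = 0$. I would mollify: set $q_\varepsilon = \varrho_\varepsilon * q \in C^\infty_c(\mathbb{R}^n)$, which still has integral zero and thus lies in $C^\infty_{c,av}(\mathbb{R}^n)$, so it represents an element of $\widetilde{D}^{-1}(\mathbb{R}^n)$. To show $\{q_\varepsilon\}$ is Cauchy in $D^{-1}$, observe that for any $v \in \mathcal{D}(\mathbb{R}^n)$, since $q_\varepsilon - q_{\varepsilon'}$ is supported in $B_{R+1}$ (for $\varepsilon,\varepsilon' < 1$) and has vanishing integral,
\[\langle q_\varepsilon - q_{\varepsilon'}, v\rangle = \int (q_\varepsilon - q_{\varepsilon'})(v - \overline{v}_{R+1}),\]
where $\overline{v}_{R+1}$ is the mean of $v$ on $B_{R+1}$. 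Cauchy--Schwarz and the classical Poincar\'e inequality on $B_{R+1}$ give
\[\|q_\varepsilon - q_{\varepsilon'}\|_{D^{-1}} \leq C(R,n)\,\|q_\varepsilon - q_{\varepsilon'}\|_{L^2(\mathbb{R}^n)} \to 0.\]
By completeness, $q_\varepsilon$ converges in $D^{-1}$; via the embedding $D^{-1} \hookrightarrow \mathcal{D}'(\mathbb{R}^n)$ the limit is identified with the distribution $q$, so $q \in D^{-1}$.

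For the ``only if'' direction, suppose $q$ is compactly supported in $B_R$ and belongs to $D^{-1}$. I would exploit the vanishing $D^1_0$-capacity of bounded sets in dimensions $n \leq 2$, by pairing $q$ with a plateau function $\eta_M \in C^\infty_c(\mathbb{R}^n)$ that equals $1$ on $B_R$ and whose $D^1_0$-norm $\|\nabla \eta_M\|_{L^2}$ tends to zero as $M \to \infty$. In dimension $n = 1$ a smoothed piecewise-affine ramp of length $M$ gives $\|\eta_M'\|_{L^2}^2 \sim 1/M$; in dimension $n = 2$ one uses a smoothing of the log-harmonic interpolation $\eta_M(x) = 1 - \log(|x|/R)/\log M$ on the annulus $R \leq |x| \leq RM$ (extended by $1$ inside and $0$ outside), for which $\|\nabla \eta_M\|_{L^2}^2 \sim 1/\log M$. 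Since $\eta_M \equiv 1$ on $\supp q$, one has $\int q\, \eta_M = \int q$ for every $M$, while
\[\left|\int q\right| = \bigl|\langle q, \eta_M\rangle\bigr| \leq \|q\|_{D^{-1}}\, \|\eta_M\|_{D_0^1} \to 0\]
as $M \to \infty$, forcing $\int q = 0$.

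The main subtlety lies in the necessity argument in the borderline dimension $n = 2$: a simple linear cutoff would produce a $D^1_0$-norm of order one, so it would only yield the estimate $|\!\int q| \leq C\|q\|_{D^{-1}}$ rather than the desired vanishing. One must therefore invoke the logarithmic cutoff, which reflects the logarithmic behaviour of the fundamental solution of $-\Delta$ in $\mathbb{R}^2$; this is the same ingredient behind the paper's use of $\vartheta_R$ in Lemma~\ref{approx}, though employed here for a different purpose. The Poincar\'e inequality used in sufficiency is just the classical one on a fixed ball, so no delicate variant is needed there.
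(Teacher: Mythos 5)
Your proof is correct. It is worth noting how it relates to the paper's own argument, which is organized a bit differently.

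For the \emph{necessity} (``only if'') direction, the paper simply writes $\int q = \langle q,[1]\rangle = 0$, pointing out that in $n\leq 2$ the constant $1$ and the constant $0$ are the same element of $D_0^1 = D^1/\mathbb{R}$. This one-liner implicitly uses the fact that the abstract duality pairing $\langle q, [\varphi]\rangle$ can be computed as $\int q\,\varphi$ even for non--compactly-supported representatives $\varphi$, which is not completely immediate. Your cutoff construction — smooth plateau functions $\eta_M$ with $\eta_M\equiv 1$ on $\supp q$ and $\|\nabla\eta_M\|_{L^2}\to 0$ — is precisely what makes this step rigorous: $\int q = \langle q,\eta_M\rangle \to 0$. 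You are also right to emphasize that the logarithmic cutoff is essential in $n=2$, exactly as in the design of $\vartheta_R$; a linear cutoff would give only $\|\nabla\eta_M\|_{L^2}\sim 1$ and no conclusion. (The heart of the lemma is the fact that bounded sets have vanishing $D_0^1$-capacity in $n\leq 2$.)

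For the \emph{sufficiency} (``if'') direction, the paper directly defines the functional $[\varphi]\mapsto\int q\,\varphi$, uses $\int q=0$ to see it is well-defined on the quotient, and bounds it by the classical Poincar\'e inequality on a ball covering $\supp q$. You instead mollify to land in $C^\infty_{c,\mathrm{av}}\subset\widetilde D^{-1}$, use the same Poincar\'e inequality to show the mollifications are Cauchy in $D^{-1}$, and identify the limit with $q$ via the embedding $D^{-1}\hookrightarrow\mathcal D'$. This is an equally valid route built on the same ingredient; the paper's direct verification of boundedness is slightly shorter, while your version has the minor advantage of not requiring one to re-derive what the duality pairing is on general elements of $D_0^1$ (you only need the pairing on smooth functions, plus density). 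Both arguments are correct.
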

\begin{proof}
If $q\in D^{-1} \cap L^1$, then
\[
\int q = \langle q, [1] \rangle = 0,
\]
where $[1]$ stands for the element of $D_0^1$ whose representatives are $1$ as well as $0$. 	

Now suppose that a compactly supported function $q\in L^2(\mathbb{R}^n)$ satisfies $\int_{\mathbb{R}^n}q=0$. Given a $[\varphi] \in D_0^1$, by the Poincar\'e inequality, we have for any $R>0$, independently of the representative $\varphi \in D^1$, 
 \[ \left\|\varphi - \frac{1}{|B_R|} \int_{B_R} \varphi\right\|_{L^2(B_R)} \leq C_R \|\nabla \varphi \|_{L^2(B_R)} \ \leq C_R \|[\varphi]\|_{D_0^1}.\]
In particular, $\varphi \in L^2_{loc}$. Taking into account this and the assumption $\int q = 0$, we see that the linear functional 
\begin{equation} \label{q_func_def}
	\langle q, [\varphi] \rangle =  \int q\,\varphi
\end{equation} 
on $D_0^1$ is well defined. Moreover, if $R>0$ is large enough that $\mathrm{supp}\, q \subset B_R$,   
  \[\left|\int q\,\varphi\right| = \left|\int_{B_R} q\left(\varphi - \frac{1}{|B_R|} \int_{B_R} \varphi\right)\right|\leq \|q\|_{L^2(B_R)} \left\|\varphi - \frac{1}{|B_R|} \int_{B_R} \varphi\right\|_{L^2(B_R)} \leq C_R \|q\|_{L^2} \|[\varphi] \|_{D_0^1}.\]
 Thus, the functional defined by \eqref{q_func_def} is bounded, i.\,e.\ $q \in D^{-1}$. 

\end{proof}
\noindent
Since $E^{-1}_\psi$ is independent of the choice of $\psi$, we suppress $\psi$ and denote this space by $E^{-1}$. In case $n \geq 3$, we will use notation $E^{-1} = D^{-1}$. We also denote $E^1_0 = D^1$ if $n \leq 2$, $E^1_0 = D^1_0$ if $n\geq 3$. For $u \in E^{-1}$, $v \in E^1_0$, we denote 
\begin{equation} \label{E_lin_func} 
	\langle u, v \rangle_E = \left\{\begin{array}{l} 
	\langle w, v\rangle \text{ if } n \geq 3,  \\
	\langle w, [v]\rangle + c \int \psi v \text{ if } n \leq 2,   
\end{array}\right.
\end{equation}
where $u = w + c \psi$, $w \in D^{-1}$, $\psi \in L^2_c$, $ \int \psi = 1$. As before, we check that the value of $\langle u, v \rangle_E$ does not depend on the choice of this decomposition. 

We recall that if $n \geq 3$, $E_0^1 = D_0^1$ and $E^{-1} = D^{-1}$ come with a Hilbert space structure. We also define inner products on $E_0^1$, $E^{-1}$ in case $n \leq 2$ by 
\[
(v_1,v_2)_{E^1_0} := ([v_1], [v_2])_{D^1_0} + \int \psi v_1 \int \psi v_2, 
\]
\[
(u_1,u_2)_{E^{-1}} := (w_1,w_2)_{D^{-1}} + c_1c_2 
\]
for $u_i=w_i+c_i\psi$, $w_i\in D^{-1}$, $c_i\in\mathbb{R}$ ($i=1,2$).
This gives an orthogonal decomposition
\[
E^{-1} = D^{-1} \oplus \mathbb{R}.
\]
We note that although the values of those products may depend on the choice of $\psi$, the topologies they induce on $E^1_0$, $E^{-1}$ do not. Formula \eqref{E_lin_func} associates to any $u \in E^{-1}$ a continuous linear functional on $E^1_0$. The resulting mapping is an isometric isomorphism between $E^{-1}$ and the continuous dual to $E^1_0$.

We extend $TV$ onto $E^{-1}$ by defining
\[
	TV(u) := \sup_{\psi \in X_1}  \langle u, -\operatorname{div}\psi \rangle_E .
\]
As usual, we check that $TV$ is a convex, weakly-* (and strongly) lower semicontinuous functional. In particular, for a fixed $g\in E^{-1}$, the functional $w\mapsto TV(w+g)$ is convex and lower semicontinuous on $D^{-1}$. 
We next give a definition of a solution of $u_t\in-\partial TV(u)$ in the space $E^{-1}$.
 It turns out the idea of evolutionary variational inequality is very convenient since it is a flow in an affine space $g+D^{-1}$ for some $g\in E^{-1}$.
\begin{definition} \label{4HS}
Assume that $u_0 \in E^{-1}$.
 We say that $u:[0,\infty[\,\to E^{-1}$ is a solution to
\begin{equation} \label{EG}
	u_t \in -\partial_{D^{-1}} TV(u) 
\end{equation}
in the sense of EVI (evolutionary variational inequality) with initial datum $u_0$ if
\begin{enumerate}
\item[(i)] $u-g$ is absolutely continuous on $[\delta,T]$ (for any $0<\delta<T<\infty$) with values in $D^{-1}$ and continuous up to zero with $u(0) = u_0$ and
\item[(i\hspace{-1.5pt}i)] $u-g$ satisfies the evolutionary variational inequality, i.\,e.
\[
	\frac12 \frac{\mathrm{d}}{\mathrm{d}t} \left\| u(t)-g \right\|^2_{D^{-1}}
	\leq TV(g) - TV \left(u(t)\right) \quad
\]
holds for a.\,e.\ $t>0$, provided that $g\in E^{-1}$ is such that $u_0-g\in D^{-1}$.
\end{enumerate}
\end{definition}
\begin{thm} \label{4UEH} 
For any $u_0\in E^{-1}$, there exists a unique solution $u$ of \eqref{EG} in the sense of EVI. 
	 Moreover, if $u^i$ is the solution to \eqref{EG} in the sense of EVI with $u^i(0)=u^i_0 \in E^{-1}$ for $i=1,2$, then
	\begin{equation} \label{contract} 
	\left\| u^1(t) - u^2(t) \right\|_{D^{-1}}
	\leq \left\| u^1_0 - u^2_0\right\|_{D^{-1}} \quad\text{for all}\quad t\geq 0,
	\end{equation}
	provided that $u^1_0 - u^2_0 \in D^{-1}$.
\end{thm}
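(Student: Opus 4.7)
The plan is to reduce the evolution in the affine space $u_0 + D^{-1}$ to a standard gradient flow on the Hilbert space $D^{-1}$. Fix $u_0 \in E^{-1}$ and define $F_{u_0}\colon D^{-1} \to [0,\infty]$ by $F_{u_0}(w) := TV(u_0+w)$; convexity and lower semicontinuity of $F_{u_0}$ on $D^{-1}$ follow directly from the corresponding properties of $TV$ on $E^{-1}$ noted earlier in the section. To apply Proposition \ref{4GT} I must verify that $F_{u_0}$ is proper with dense domain. Writing $u_0 = w_0 + c\psi$ with $w_0 \in D^{-1}$ and picking any $\chi \in C^\infty_c$ with $\int \chi = c$, the function $\chi - u_0 = (\chi - c\psi) - w_0$ lies in $D^{-1}$ by Lemma \ref{4DS}, and $F_{u_0}(\chi - u_0) = TV(\chi) < \infty$, so $F_{u_0}$ is proper. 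For density, given $w \in D^{-1}$, I would apply Lemma \ref{approx} to $\widetilde w := w + u_0 - \chi \in D^{-1}$ to obtain $\widetilde w_k \in C^\infty_{c,av}$ with $\widetilde w_k \to \widetilde w$ in $D^{-1}$; then $w_k := \widetilde w_k + \chi - u_0 \in D^{-1}$ satisfies $w_k \to w$ in $D^{-1}$ and $F_{u_0}(w_k) = TV(\chi + \widetilde w_k) < \infty$.

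Proposition \ref{4GT} then yields a unique $w \in C([0,\infty[, D^{-1})$, absolutely continuous on every $[\delta,T]$, with $w(0)=0$ and $w_t \in -\partial_{D^{-1}} F_{u_0}(w)$ a.\,e. Setting $u(t) := u_0 + w(t)$ gives a curve in $E^{-1}$ satisfying Definition \ref{4HS}(i) for every admissible $g$, since $u - g = w + (u_0 - g)$ inherits the regularity of $w$. For Definition \ref{4HS}(ii), the equivalence between the subdifferential inclusion and the EVI recalled immediately after Proposition \ref{4GT} translates $w_t \in -\partial_{D^{-1}} F_{u_0}(w)$ into
\[\tfrac12\tfrac{\mathrm{d}}{\mathrm{d}t}\|w(t)-h\|_{D^{-1}}^2 \leq F_{u_0}(h) - F_{u_0}(w(t)) \quad \text{for every } h \in D^{-1};\]
substituting $h = g - u_0$, which is admissible precisely when $u_0 - g \in D^{-1}$, delivers the claimed EVI.

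For the contraction \eqref{contract} let $u^1, u^2$ be two EVI solutions with $u^1_0 - u^2_0 \in D^{-1}$, and set $\widetilde w^i(t) := u^i(t) - u^1_0$. Then $\widetilde w^1(0) = 0$, $\widetilde w^2(0) = u^2_0 - u^1_0 \in D^{-1}$, and both curves take values in $D^{-1}$. Since $F_{u^2_0}$ differs from $F_{u^1_0}$ only by a fixed translation of its argument by an element of $D^{-1}$, the identity $\partial_{D^{-1}} F_{u^1_0}(\widetilde w^2) = \partial_{D^{-1}} F_{u^2_0}(\widetilde w^2 - (u^2_0 - u^1_0))$, together with $\widetilde w^2_t = u^2_t$, shows that both $\widetilde w^1$ and $\widetilde w^2$ are gradient flows of the \emph{same} convex, lower semicontinuous functional $F_{u^1_0}$ on $D^{-1}$. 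The classical nonexpansiveness of such flows then gives
\[\|u^1(t) - u^2(t)\|_{D^{-1}} = \|\widetilde w^1(t) - \widetilde w^2(t)\|_{D^{-1}} \leq \|u^1_0 - u^2_0\|_{D^{-1}},\]
which proves \eqref{contract}; specializing to $u^1_0 = u^2_0$ yields uniqueness. The main subtlety in the whole argument is the density of $D(F_{u_0})$ in $D^{-1}$ for arbitrary $u_0 \in E^{-1}$, where the approximation Lemma \ref{approx}, combined with Lemma \ref{4DS} and the decomposition $E^{-1} = D^{-1} \oplus \mathbb{R}\psi$, plays an essential role.
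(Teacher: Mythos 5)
Your proof is correct and follows the same broad strategy as the paper: reduce the evolution in the affine space to a gradient flow of a translated total variation functional in $D^{-1}$, apply Proposition \ref{4GT}, then translate back. Two substantive differences are worth flagging. First, for the contraction estimate \eqref{contract} (hence uniqueness), the paper argues directly from the EVI without passing through the subdifferential inclusion: plugging $g=u^2(t)$ into the EVI for $u^1$ and $g=u^1(t)$ into the EVI for $u^2$, adding, and observing that $\frac12\frac{\mathrm{d}}{\mathrm{d}t}\|u^1-u^2\|_{D^{-1}}^2 = (u^1_t, u^1-u^2)_{D^{-1}}+(u^2_t, u^2-u^1)_{D^{-1}}\leq 0$. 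You instead translate both curves to gradient flows of the single functional $F_{u^1_0}$ on $D^{-1}$ and invoke classical nonexpansiveness of Hilbertian gradient flows. Your route is valid, but it implicitly uses the EVI $\Leftrightarrow$ gradient-flow-inclusion equivalence in $D^{-1}$ (which the paper only sketches for the $\psi=0$ case), whereas the paper's summation trick is shorter and entirely self-contained. Second, you explicitly verify the hypotheses of Proposition \ref{4GT} — properness and density of $D(F_{u_0})$ in $D^{-1}$, via Lemma \ref{4DS} and Lemma \ref{approx} — which the paper's proof silently assumes; filling this in is a genuine improvement in rigor, although for density alone one does not actually need the strict approximation $TV(\widetilde w_k)\to TV(\widetilde w)$ from Lemma \ref{approx}, only that $\widetilde w_k+\chi\in C^\infty_c$ has finite $TV$.
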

\begin{proof}
Uniqueness follows from contractivity \eqref{contract}, which is established by a standard reasoning \cite{AGS}.
We give a short proof for the reader's convenience and for completeness.
 Let $u^i$ ($i=1,2$) be a solution to \eqref{EG} in the sense of EVI with initial datum $u^i_0$ such that $u^1_0 - u^2_0\in D^{-1}$.
 Since $u^i$ are solutions, we also have $u^i(t)-u^i_0\in D^{-1}$ for $t\geq0$ and so $u^1_0 - u^2(t)\in D^{-1}$, $u^2_0 - u^1(t)\in D^{-1}$.
 Thus, EVI yields  
\begin{multline*}
	\frac12\frac{\mathrm{d}}{\mathrm{d}t} \left\| u^1-u^2 \right\|^2_{D^{-1}} = \left(u^1_t, u^1 - u^2\right)_{D^{-1}} + \left(u^2_t, u^2 - u^1\right)_{D^{-1}} \\ =  \left.\frac12\frac{\mathrm{d}}{\mathrm{d}t} \left\| u^1(t)-g \right\|^2_{D^{-1}}\right|_{g = u^2(t)} + \left.\frac12\frac{\mathrm{d}}{\mathrm{d}t} \left\| u^2(t)-g \right\|^2_{D^{-1}}\right|_{g = u^1(t)} 
	\\ \leq TV\left(u^2(t)\right)-TV\left(u^1(t)\right) + TV\left(u^1(t)\right)-TV\left(u^2(t)\right) = 0
\end{multline*}
for a.\,e.\ $t>0$. We conclude that $\left\| u^1(t)-u^2(t) \right\|^2$ is non-increasing, in particular \eqref{contract} holds.

The existence is a bit more involved.
 For $u_0=w_0+g_0\in E^{-1}$ with $w_0\in D^{-1}$, we consider the gradient flow of the form
\begin{equation} \label{EW}
	w_t \in -\partial_{D^{-1}} TV(w+g_0),\quad w(0) = w_0.
\end{equation}
Applying Proposition \ref{4GT}, there is a unique solution $w$ to \eqref{EW} for $w_0\in D^{-1}$.
 This solution satisfies the evolutionary variational inequality
\[
	\frac12\frac{\mathrm{d}}{\mathrm{d}t} \| w-f \|^2_{D^{-1}}
	\leq TV(f+g_0) - TV(w+g_0)
	\quad\text{for a.\,e.\ } t>0
\]
for any $f\in D^{-1}$.
 Setting $u=w+g_0$, $g=f+g_0$, we end up with
\[
	\frac12\frac{\mathrm{d}}{\mathrm{d}t} \| w-g \|^2_{D^{-1}}
	\leq TV(g) - TV\left(u(t)\right).
\]
Since $g$ can be taken arbitrary such that $u_0-g\in D^{-1}$, this shows that $u$ is the solution of \eqref{EG} in the sense of EVI; condition (i) follows easily from Proposition \ref{4GT}.
\end{proof}

It is non-trivial to characterize the subdifferential $\partial_{D^{-1}}TV$. For this purpose, we introduce a mapping $I$ which plays a role analogous to $-\Delta$ in $n \geq 3$.  
\begin{lem} \label{HD}
Let $n \leq 2$. The mapping $I\colon E^1_0 \to E^{-1}$ defined by
\[
	I(f) = (-\Delta) [f] + \left( \int_{\mathbb{R}^n} f\psi \right) \psi
\]
is an isometric isomorphism.
\end{lem}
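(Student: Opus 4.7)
The strategy is to exploit matching orthogonal decompositions on both sides and to show that $I$ is block-diagonal with respect to them. On the codomain, the definition of the inner product on $E^{-1}$ already provides the orthogonal sum $E^{-1} = D^{-1} \oplus \mathbb{R}\psi$, where we identify the scalar $c \in \mathbb{R}$ with the element $c\psi$. On the domain side, the first step would be to establish that the assignment
\[
\Phi \colon E^1_0 = D^1 \ni f \longmapsto \bigl([f],\, \textstyle\int f\psi\bigr) \in D^1_0 \oplus \mathbb{R}
\]
is an isometric isomorphism onto the orthogonal sum. Well-definedness of the scalar $\int f\psi$ uses that $\psi$ is compactly supported and square-integrable while representatives of elements of $D^1$ lie in $L^2_{loc}$ by the local Poincar\'e inequality (compare Lemma \ref{poincare}); isometricity of $\Phi$ is immediate from the stated definition of $(\cdot,\cdot)_{E^1_0}$. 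Surjectivity of $\Phi$ follows by shifting a representative: given $([g],c) \in D^1_0 \oplus \mathbb{R}$ and any representative $g \in D^1$ of $[g]$, the function $f = g + \bigl(c - \int g\psi\bigr)$ satisfies $[f] = [g]$ and $\int f\psi = c$, using $\int \psi = 1$. For injectivity, if $[f] = 0$ then $f$ is a.\,e.\ a constant $\alpha$, and $\int f\psi = 0$ then forces $\alpha = 0$.

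With these identifications in place, $I$ is the block-diagonal operator $([f], c) \mapsto \bigl((-\Delta)[f],\, c\bigr)$. The first block is the canonical Riesz isometric isomorphism $-\Delta \colon D^1_0 \to D^{-1}$ recalled in the introduction; the second is the identity on $\mathbb{R}$. Consequently $I$ inherits isometricity, injectivity, and surjectivity from its two blocks. Concretely, the isometry identity
\[
\|I(f)\|_{E^{-1}}^2 = \|(-\Delta)[f]\|_{D^{-1}}^2 + \Bigl(\textstyle\int f\psi\Bigr)^2 = \|[f]\|_{D^1_0}^2 + \Bigl(\textstyle\int f\psi\Bigr)^2 = \|f\|_{E^1_0}^2
\]
is read off directly. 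Surjectivity amounts to solving $(-\Delta)[g] = w$ in $D^1_0$ for the $D^{-1}$ component of a given $u = w + c\psi$, then choosing the scalar part by the shift described above; injectivity combines $[f] = 0$ with $\int f\psi = 0$ as before.

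No step is genuinely hard. The only mild subtlety is the setup of $\Phi$ on the domain side, i.\,e.\ ensuring that $\int f\psi$ is meaningful for every $f \in D^1$ and that $\Phi$ is onto; both are consequences of the chosen properties of $\psi$ (namely $\psi \in L^2$ compactly supported with $\int \psi = 1$) together with $L^2_{loc}$ membership of representatives of $D^1$. Once $\Phi$ is in hand, the proof is essentially a bookkeeping exercise transporting the well-known Riesz isometry $-\Delta \colon D^1_0 \to D^{-1}$ across the two orthogonal decompositions.
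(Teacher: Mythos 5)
Your proof is correct and takes essentially the same approach as the paper's: both arguments reduce matters to the Riesz isometry $-\Delta\colon D^1_0\to D^{-1}$ on the $D^{-1}$-component and the obvious identification on the scalar component, with surjectivity obtained by adjusting the representative of $\overline f\in D^1_0$ by a constant so that $\int f\psi$ hits the prescribed value, and injectivity from $[f]=0$ plus $\int f\psi=0$ forcing $f=0$. The only difference is presentational — you make the block-diagonal structure explicit via the auxiliary map $\Phi$, whereas the paper argues the same points directly.
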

\begin{proof}
It is clear that $I(f)\in E^{-1}$ and $I$ is linear.
 For given $u=w+c\psi\in E^{-1}$ with $w\in D^{-1}$, $c\in\mathbb{R}$, there is $\overline{f}\in D^1_0$ such that $(-\Delta)\overline{f}=w$. Since a representative $f$ of $\overline{f}$ is determined up to an additive constant, there is a unique representative $f$ such that
\[
	\int_{\mathbb{R}^n} f\psi = c.
\]
Thus, the mapping $I$ is surjective.
 If $I(f)=0$, then $(-\Delta) [f] = 0$ so $[f]=0$. Thus $f$ is a constant. Since $\int f\,\psi=0$, this constant must be zero, so $f=0$. Thus, $I$ is injective. Recalling our definitions of inner products on $E^1_0$, $E^{-1}$, it is easy to check that $I$ is an isometry.
\end{proof}

We have a characterization of the polar of $TV$ in $E^{-1}$ as in Theorem \ref{DC}.
\begin{thm} \label{4HP}
Let $n \leq 2$. Let $\Psi$ be given by
\[
	\Psi(v) = \inf \left\{ \|Z\|_\infty \bigm|
	v = I(-\operatorname{div} Z),\ Z \in L^\infty(\mathbb{R}^n),\ 
	\operatorname{div}Z \in E_0^1 \right\}
\]
for $v\in E^{-1}$.
 Then $(TV)^0=\Psi$.
\end{thm}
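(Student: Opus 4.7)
The plan is to mimic the proof of Theorem \ref{DC} throughout, with the Hilbert space $D^{-1}$ replaced by $E^{-1}$ and the Riesz isomorphism $-\Delta$ replaced by the isometric isomorphism $I\colon E^1_0 \to E^{-1}$ established in Lemma \ref{HD}. The crucial identity (verified from the definitions of the inner products on $E^{\pm 1}$ and the pairing $\langle \cdot, \cdot\rangle_E$) is that
\[ (u, I(f))_{E^{-1}} = \langle u, f \rangle_E \quad \text{for all } u \in E^{-1}, f \in E^1_0, \]
which plays the role of $(u, -\Delta \varphi)_{D^{-1}} = \langle u, \varphi\rangle$ in the original argument. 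The two inequalities $\Psi \leq TV^0$ and $TV^0 \leq \Psi$ are then proved separately, using that both functionals are non-negative, convex, lower semicontinuous and positively one-homogeneous, so the bipolar identity $(F^0)^0 = F$ applies to each (Remark \ref{R00}).

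For the easier direction $\Psi \leq TV^0$, I would show the dual bound $TV \leq \Psi^0$ and take polars. Given any $Z \in C^\infty_c(\mathbb{R}^n, \mathbb{R}^n)$ with $Z \not\equiv 0$, one has $\operatorname{div} Z \in C^\infty_c \subset E^1_0$, so $v_Z := I(-\operatorname{div} Z) \in E^{-1}$ with $\Psi(v_Z) \leq \|Z\|_\infty$; using the isometry identity above,
\[ \Psi^0(u) \geq \frac{(u, v_Z)_{E^{-1}}}{\|Z\|_\infty} = \frac{\langle u, -\operatorname{div} Z\rangle_E}{\|Z\|_\infty}. \]
Taking the supremum over admissible $Z$ yields $\Psi^0 \geq TV$, whence $TV^0 \geq (\Psi^0)^0 = \Psi$.

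For $TV^0 \leq \Psi$, fix $v \in E^{-1}$ with $\Psi(v) < \infty$; as in Remark \ref{DC1}(ii), a direct method argument (convexity and weak-$*$ lsc) shows the infimum is attained by some $Z$ with $v = I(-\operatorname{div} Z)$, $\operatorname{div} Z \in E^1_0$ and $\|Z\|_\infty = \Psi(v)$. For $u \in E^{-1}$, the aim is to prove $(u,v)_{E^{-1}} \leq \|Z\|_\infty TV(u)$. The key step is to establish an analog of Lemma \ref{approx} on $E^{-1}$: choosing $\psi \in C^\infty_c$ with $\int \psi = 1$ (permissible since $E^{-1}$ is independent of this choice), for $u = w + c\psi$ with $w \in D^{-1}$ I would apply Lemma \ref{approx} to $w$ to produce $w_k \in C^\infty_{c,av}$ converging to $w$ in $D^{-1}$ with $TV(w_k) \to TV(w)$, and set $u_k = w_k + c\psi \in C^\infty_c$. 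For such smooth compactly supported $u_k$ the computation becomes elementary:
\[ (u_k, v)_{E^{-1}} = \langle u_k, -\operatorname{div} Z\rangle_E = -\int u_k \operatorname{div} Z = \int Z \cdot \nabla u_k \leq \|Z\|_\infty TV(u_k). \]
Since $u_k \to u$ in $E^{-1}$, the left-hand side converges to $(u,v)_{E^{-1}}$. The main obstacle is the right-hand side: one needs $\limsup_k TV(u_k) \leq TV(u)$, which does \emph{not} follow from $TV(w_k) \to TV(w)$ alone, since that gives only convergence in total mass of the measures $\nabla w_k \to \nabla w$ and not convergence of $|\nabla w_k + c \nabla \psi|(\mathbb{R}^n)$. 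I would overcome this either by invoking Reshetnyak continuity (the strict convergence $\nabla w_k \to \nabla w$ plus continuity and boundedness of $y \mapsto |y + c\nabla \psi(x)|$ yield convergence of $\int |\nabla w_k + c\nabla \psi|$), or more robustly by rerunning the construction of Lemma \ref{approx} directly for $u \in E^{-1}$, dropping the mean-subtraction (which was needed only to preserve average-freeness) and setting $u_{\varepsilon,R} = (\varrho_\varepsilon * u) \vartheta_R$; the $D^{-1}$-convergence and $TV$ estimates from the proof of Lemma \ref{approx} transfer with only cosmetic modifications, since $E^{-1} \hookrightarrow \mathcal D'$ continuously and the Poincaré-type inequality of Lemma \ref{poincare} is not needed when the mean is not subtracted.
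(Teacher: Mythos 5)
Your proof follows the same skeleton as the paper's: you prove the two inequalities $\Psi\leq TV^0$ and $TV^0\leq\Psi$ separately, use the isometry $I$ of Lemma \ref{HD} in place of $-\Delta$, and invoke the bipolar identity of Remark \ref{R00}. The paper is indeed terse about the approximation step in the direction $TV^0\leq\Psi$, saying only ``by approximation, as in the proof of Theorem \ref{DC},'' even though Lemma \ref{approx} is stated for $D^{-1}$ rather than $E^{-1}$; you have correctly flagged that $TV(w_k)\to TV(w)$ does not by itself give $TV(w_k+c\psi)\to TV(w+c\psi)$. That is a real issue and you are right to address it.

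However, your first proposed fix via Reshetnyak continuity does not work. The function $y\mapsto |y+c\nabla\psi(x)|$ is not positively one-homogeneous, so the theorem does not apply to it. More fundamentally, strict convergence of $\nabla w_k\to\nabla w$ does \emph{not} imply $\int|\nabla w_k+\phi|\to\int|\nabla w+\phi|$ for a fixed absolutely continuous vector field $\phi$: take $n=1$, $\phi=-\mathbf{1}_{[0,1]}$, and let $\nabla w_k$ be nonnegative smooth bump-function sums on $[0,1]$ of total mass $1$ converging weakly-$*$ to $\nabla w=\mathcal{L}^1\res[0,1]$; this is strict convergence, yet $\int|\nabla w_k+\phi|\approx 2$ while $\int|\nabla w+\phi|=0$. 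So the claimed ``Reshetnyak continuity'' step is false in general.

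Your second fix — rerunning the construction of Lemma \ref{approx} for $u\in E^{-1}$ directly — is the right idea, but ``dropping the mean-subtraction'' is not quite the correct modification. Writing $u=w+c\psi$ with $w\in D^{-1}$, what is needed is that the approximants $u_{\varepsilon,R}\in C^\infty_c$ satisfy $u_{\varepsilon,R}-u\in D^{-1}$ (equivalently, by Lemma \ref{4DS}, $\int u_{\varepsilon,R}=c$) and $u_{\varepsilon,R}-u\to 0$ in $D^{-1}$; with the mean-subtraction simply dropped, $u_{\varepsilon,R}-u$ need not lie in $D^{-1}$ and the $D^{-1}$ estimates of Lemma \ref{approx} do not apply. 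The correct modification is to normalize the mean to $c$ rather than to $0$, i.e.\ to set
\[
u_{\varepsilon,R}=\left(\varrho_\varepsilon*u-\frac{\int\vartheta_R\,\varrho_\varepsilon*u - c}{\int\vartheta_R}\right)\vartheta_R,
\]
after which $u_{\varepsilon,R}-c\psi$ is a compactly supported mean-zero $L^2$ function, hence in $D^{-1}$, and the $D^{-1}$-convergence and the $TV$ bound proceed as in Lemma \ref{approx} using $TV(\varrho_\varepsilon*u)\leq TV(u)$. This repairs your argument and brings it into line with what the paper presumably intends.
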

Admitting this fact, we are able to give a characterization of the subdifferential.
\begin{thm} \label{4HSS}
Let $n \leq2$. An element $v \in E^{-1}$ belongs to $\partial_{E^{-1}}TV(u)$ if and only if there is $Z\in L^\infty(\mathbb{R}^n)$ with $\operatorname{div}Z\in E_0^1$ such that
\begin{enumerate}
\item[(i)] $|Z| \leq 1$
\item[(i\hspace{-1.5pt}i)] $v = I(-\operatorname{div}Z)$
\item[(i\hspace{-1.5pt}i\hspace{-1.5pt}i)] $\displaystyle -\langle u, \operatorname{div}Z \rangle_E  = TV(u).$
\end{enumerate}
\end{thm}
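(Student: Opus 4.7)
The argument parallels that of Theorem \ref{Ch}, with the isomorphism $I\colon E_0^1 \to E^{-1}$ from Lemma \ref{HD} and Theorem \ref{4HP} respectively playing the roles of $-\Delta$ and Theorem \ref{DC}. Since $TV$ is convex, lower semicontinuous and positively one-homogeneous on $E^{-1}$, Lemma \ref{ACM} applied in the Hilbert space $E^{-1}$ yields
\[
v \in \partial_{E^{-1}} TV(u) \iff (TV)^0(v) \leq 1 \text{ and } (u,v)_{E^{-1}} = TV(u),
\]
and by Theorem \ref{4HP} we have $(TV)^0 = \Psi$.

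Next, I would establish (as an analog of Remark \ref{DC1}(ii)) that when $\Psi(v) < \infty$ the infimum defining $\Psi(v)$ is attained. For a minimizing sequence $(Z_j)$, the $L^\infty$-norms are bounded, so up to a subsequence there is a weak-$*$ limit $Z \in L^\infty(\mathbb{R}^n)$ with $\|Z\|_\infty \leq \liminf_j \|Z_j\|_\infty$. Because each $Z_j$ satisfies $-\operatorname{div} Z_j = I^{-1}(v) \in E_0^1$ and weak-$*$ convergence in $L^\infty$ passes to distributional divergences, the limit $Z$ inherits $-\operatorname{div} Z = I^{-1}(v)$ as a distribution; since this distribution already represents an element of $E_0^1$, we conclude that $\operatorname{div} Z \in E_0^1$ and $v = I(-\operatorname{div} Z)$. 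Thus $\Psi(v) \leq 1$ is equivalent to the existence of $Z \in L^\infty(\mathbb{R}^n)$ satisfying (i), (ii) together with $\operatorname{div} Z \in E_0^1$.

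Finally, I would identify the inner product with the $E$-pairing. Writing $u = w + c\psi$ with $w \in D^{-1}$, the definitions of $(\cdot,\cdot)_{E^{-1}}$, $\langle\cdot,\cdot\rangle_E$, and $I$ give, for any $f \in E_0^1$,
\[
(u, I(f))_{E^{-1}} = (w, -\Delta[f])_{D^{-1}} + c \int f\psi = \langle w, [f]\rangle + c \int f\psi = \langle u, f\rangle_E,
\]
where the middle equality uses that $(-\Delta)\colon D_0^1 \to D^{-1}$ is the Riesz isometry. Taking $f = -\operatorname{div} Z$ and $v = I(-\operatorname{div} Z)$, this converts $(u,v)_{E^{-1}} = TV(u)$ into (iii); the converse implications are obtained by reversing these steps.

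The main subtle point to write out carefully is the attainment in Step 2, namely that the weak-$*$ limit $Z$ inherits not merely a distributional divergence but actual membership of $\operatorname{div} Z$ in $E_0^1$. As noted above, this is automatic here because every competitor in a minimizing sequence has precisely the same prescribed divergence $I^{-1}(v) \in E_0^1$, yet the logical separation between the isomorphism $I$ (acting on $E_0^1$) and the distributional divergence operator (acting on $L^\infty$) must be articulated cleanly, since in the regime $n\leq 2$ the identification $E_0^1 = D^1$ no longer uniquely specifies a representative.
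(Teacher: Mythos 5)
Your proposal is correct and follows essentially the same route as the paper's proof: apply Lemma \ref{ACM} together with Theorem \ref{4HP} to reduce to $\Psi(v)\leq 1$ and $(u,v)_{E^{-1}}=TV(u)$, then unpack the former via attainment of the infimum in $\Psi$ and convert the latter to (iii) through the identity $(u,I(f))_{E^{-1}}=\langle u,f\rangle_E$. The only difference is that you spell out the attainment step (analogue of Remark \ref{DC1}(i\hspace{-1.5pt}i)) and write the pairing identity more cleanly in terms of $f\in E_0^1$, both of which the paper leaves terse; neither changes the argument.
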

\begin{proof}[Proof of Theorem \ref{4HSS}]
The proof parallels that of Theorem \ref{Ch}.
 By Lemma \ref{ACM} and Theorem \ref{4HP}
\[
	v \in \partial TV(u)\ \iff\
	\Psi(v) \leq 1 \text{ and } (u,v)_{E^{-1}}=TV(u).
\] 
The properties (i), (i\hspace{-1pt}i) together with $\operatorname{div}Z\in E^1_0$ are equivalent to $\Psi(v)\leq1$.
 Since
\begin{equation} \label{4II}
	(u,v)_{E^{-1}} = \left(w, (-\Delta)[v] \right)_{D^{-1}}
	+ c \int_{\mathbb{R}^n} v\,\psi
	= \left\langle w, [v] \right\rangle + c \int_{\mathbb{R}^n} v\,\psi,
\end{equation}
the Euler equation $(u,v)_{E^{-1}}=TV(u)$ is equivalent to (i\hspace{-1pt}i\hspace{-1pt}i).
\end{proof}

\begin{proof}[Proof of Theorem \ref{4HP}]
The proof parallels that of Theorem \ref{DC}.
 We first prove that
\[
	(u, v)_{E^{-1}} \leq \|Z\|_\infty \quad\text{for all}\quad
	u \in E^{-1} \quad\text{with}\quad TV(u) \leq 1
\]
for $v=I(-\operatorname{div}Z)$.
 This implies $TV^0\leq\Psi$.
 The estimate $(u,v)_{E^{-1}}\leq \|Z\|_\infty$ formally follows from the identity \eqref{4II}.
 Indeed, by \eqref{4II}, we see
\[
	(u, v)_{E^{-1}} = - \langle w, [\operatorname{div}Z]\rangle
	- c \int_{\mathbb{R}^n} \psi \operatorname{div}Z.
\]
If $u$ is in $C^\infty_c({\mathbb{R}^n} )$, then, by this formula, we obtain
\[
	(u, v)_{E^{-1}} = -\int_{\mathbb{R}^n} u\operatorname{div}Z
	= \int_{\mathbb{R}^n} \nabla u \cdot Z \leq \|Z\|_\infty TV(u).
\]
By approximation, as in the proof of Theorem \ref{DC}, we conclude the desired estimate.

The other inequality $\Psi\leq TV^0$ follows from $TV\leq\Psi^0$.
 The proof of $TV\leq\Psi^0$ is parallel to that of Theorem \ref{DC} by replacing $\Delta\operatorname{div}Z$ by $I(-\operatorname{div}Z)$ and the $D^{-1}$ inner product by the $E^{-1}$ inner product, respectively, if one notes the identity \eqref{4II}.
 Since $I$ is an isometry, $\Psi$ is lower semicontinuous, and we conclude that $\Psi=TV^0$ by Remark \ref{R00}.

\end{proof}
We have to be careful, since the $E^{-1}$ gradient flow
\[
	u_t \in -\partial_{E^{-1}} TV(u)
\]
does not correspond to the total variation flow $u_t=(-\Delta)\operatorname{div}\left(\nabla u/|\nabla u|\right)$.
 By Theorem \ref{4HSS}(i\hspace{-1pt}i\hspace{-1pt}i), $Z=\nabla u/|\nabla u|$ if $\nabla u\neq0$.
 Thus the $E^{-1}$ gradient flow is formally of the form
\[
	u_t = (-\Delta)\operatorname{div}\left(\nabla u/|\nabla u|\right)
	+ \psi \int_{\mathbb{R}^n} \psi \operatorname{div}\left(\nabla u/|\nabla u|\right).
\]
To recover the original total variation flow, we consider ``partial'' subdifferential in the direction of $D^{-1}$.
 Let $P$ be the orthogonal projection from $E^{-1}$ to $D^{-1}$.
 Then, by definition,
\[
	\partial_{D^{-1}} TV(w+c\psi) = P \partial_{E^{-1}} TV(u).
\]
The equation
\[
	w_t \in -\partial_{D^{-1}} TV(w+c\psi)
\]
is now formally of the form
\[
	u_t = (-\Delta) \operatorname{div}\left(\nabla u/|\nabla u|\right)
\]
since $c\psi$ is time-independent. Here is a precise statement.
\begin{thm} \label{4HPS} Let $n \leq 2$. 
 Consider the functional $\mathcal{F}:w\mapsto TV(w+c\psi)$ on $D^{-1}$ for a fixed $c\in\mathbb{R}$ and $\psi$.
 Then, $\partial_{D^{-1}}\mathcal{F}(w)=P\partial_{E^{-1}}TV(u)$ for $u=w+c\psi$.
 In particular, an element $v\in D^{-1}$ belongs to $\partial_{D^{-1}}\mathcal{F}(w)$ if and only if there is $Z\in L^\infty(\mathbb{R}^n)$ with $\operatorname{div}Z\in E^1_0$ such that
\begin{enumerate}
\item[(i)] $|Z| \leq 1$,
\item[(i\hspace{-1.5pt}i)] $v = \Delta\operatorname{div}Z$, 
\item[(i\hspace{-1.5pt}i\hspace{-1.5pt}i)] $\displaystyle - \langle u, \operatorname{div}Z \rangle_E =TV(u)$.
\end{enumerate}
\end{thm}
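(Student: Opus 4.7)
The plan is to prove $\partial_{D^{-1}}\mathcal{F}(w)=P\partial_{E^{-1}}TV(u)$ with $u=w+c\psi$ and $P\colon E^{-1}\to D^{-1}$ the orthogonal projection, and then to combine this with Theorem \ref{4HSS} and with the identity $PI(-\operatorname{div} Z)=\Delta\operatorname{div} Z$ (immediate from the orthogonal decomposition $E^{-1}=D^{-1}\oplus\mathbb{R}\psi$ and the definition of $I$) to deduce the characterization (i)--(iii). The inclusion $P\partial_{E^{-1}}TV(u)\subset\partial_{D^{-1}}\mathcal{F}(w)$ is immediate: given $\tilde v\in\partial_{E^{-1}}TV(u)$ and $w'\in D^{-1}$, set $u'=w'+c\psi$; then $u'-u=w'-w\in D^{-1}$ is orthogonal to $\mathbb{R}\psi$ in $E^{-1}$, whence $(\tilde v,u'-u)_{E^{-1}}=(P\tilde v,w'-w)_{D^{-1}}$, so the subgradient inequality for $TV$ at $u$ becomes the one for $\mathcal{F}$ at $w$.

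For the reverse inclusion, given $v\in\partial_{D^{-1}}\mathcal{F}(w)$, I would produce $s_0\in\mathbb{R}$ such that $\tilde v:=v+s_0\psi$ lies in $\partial_{E^{-1}}TV(u)$. By Lemma \ref{ACM} and the identification $TV=\Psi^0$ (a consequence of Theorem \ref{4HP} and Remark \ref{R00}), this amounts to the Euler equality $(u,\tilde v)_{E^{-1}}=(v,w)_{D^{-1}}+s_0c=TV(u)$ together with the dual bound $(v,w')_{D^{-1}}+s_0c'\leq TV(w'+c'\psi)$ for every $u'=w'+c'\psi\in E^{-1}$. I introduce the marginal
\[\phi(c'):=\inf_{w'\in D^{-1}}\bigl\{TV(w'+c'\psi)-(v,w')_{D^{-1}}\bigr\},\]
which is convex in $c'$ (partial infimum of the jointly convex map $(w',c')\mapsto TV(w'+c'\psi)-(v,w')_{D^{-1}}$) and, through the change of variables $w'\mapsto\lambda w'$ with $\lambda>0$, inherits positive $1$-homogeneity from $TV$. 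Since $v\in\partial_{D^{-1}}\mathcal{F}(w)$, the subgradient inequality shows the infimum at $c'=c$ is attained at $w'=w$, so $\phi(c)=TV(u)-(v,w)_{D^{-1}}\in\mathbb{R}$. Combining convexity, positive homogeneity and finiteness of $\phi(c)$, I deduce $\phi(0)=0$ and global finiteness of $\phi$; thus $\phi$ is a finite sublinear function of the form $\phi(c')=c'\phi(1)$ for $c'\geq 0$ and $\phi(c')=-c'\phi(-1)$ for $c'\leq 0$, with $\phi(1)+\phi(-1)\geq 0$.

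The Euler equality forces $s_0c=\phi(c)$, which selects $s_0=\phi(1)$ if $c>0$, $s_0=-\phi(-1)$ if $c<0$, and any $s_0\in[-\phi(-1),\phi(1)]$ if $c=0$; in all cases $s_0\in[-\phi(-1),\phi(1)]$, and together with the sublinear shape of $\phi$ this gives $\phi(c')\geq s_0c'$ for every $c'\in\mathbb{R}$. Hence for any $u'=w'+c'\psi$,
\[TV(u')-(\tilde v,u')_{E^{-1}}=TV(w'+c'\psi)-(v,w')_{D^{-1}}-s_0c'\geq\phi(c')-s_0c'\geq 0,\]
so $\Psi(\tilde v)\leq 1$ and $\tilde v\in\partial_{E^{-1}}TV(u)$. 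Theorem \ref{4HSS} then produces $Z\in L^\infty(\mathbb{R}^n)$ with $|Z|\leq 1$, $\operatorname{div} Z\in E^1_0$, $\tilde v=I(-\operatorname{div} Z)$ and $-\langle u,\operatorname{div} Z\rangle_E=TV(u)$; projecting, $v=P\tilde v=\Delta\operatorname{div} Z$, which yields (ii). The main obstacle is the structural analysis of $\phi$ in the second paragraph: ruling out $\phi(c')=-\infty$ anywhere and confirming $\phi(0)=0$. Both rest crucially on the positive $1$-homogeneity of $TV$ on $E^{-1}$ propagating to $\phi$ and interacting with convexity and the finite value $\phi(c)\in\mathbb{R}$ in a one-dimensional setting, where the argument is: a $-\infty$ value would propagate along a ray by homogeneity, and combining with convexity and a finite value on an opposite ray would contradict the already-established $\phi(0)=0$.
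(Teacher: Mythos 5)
Your proof is correct, and it supplies an argument that the paper does not actually give. In the text preceding the theorem, the authors simply assert the identity $\partial_{D^{-1}}TV(w+c\psi)=P\partial_{E^{-1}}TV(u)$ ``by definition'' and then state Theorem~\ref{4HPS} without proof. But the identity between the Hilbert-space subdifferential of the restriction $\mathcal{F}(\cdot)=TV(\cdot+c\psi)$ on $D^{-1}$ and the projected subdifferential $P\partial_{E^{-1}}TV(u)$ is a genuine claim: the inclusion $P\partial_{E^{-1}}TV(u)\subset\partial_{D^{-1}}\mathcal{F}(w)$ is automatic (as you verify in your first paragraph), but the reverse inclusion --- the ``lifting'' of $v\in\partial_{D^{-1}}\mathcal{F}(w)$ to $\tilde v=v+s_0\psi\in\partial_{E^{-1}}TV(u)$ --- is not automatic for a general convex functional restricted to an affine subspace and requires a qualification argument. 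Your route, via the marginal function $\phi(c')=\inf_{w'}\{TV(w'+c'\psi)-(v,w')_{D^{-1}}\}$ and the sublinearity that $\phi$ inherits from the positive $1$-homogeneity of $TV$ on $E^{-1}$, is exactly the structural feature that makes the lifting possible here, and your choice of $s_0$ and the verification that $(\tilde v,u')_{E^{-1}}\leq TV(u')$ for all $u'$ together with the Euler equality are correct. The final reduction to Theorem~\ref{4HSS} and the identity $PI(-\operatorname{div} Z)=\Delta[\operatorname{div} Z]$ are also correct.

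One minor imprecision: you assert that ``convexity, positive homogeneity and finiteness of $\phi(c)$'' give ``global finiteness of $\phi$.'' They give $\phi>-\infty$ everywhere (which is what you actually use), but they do not by themselves exclude $\phi\equiv+\infty$ on the ray opposite to $c$. This is harmless: the inequality $\phi(c')\geq s_0c'$ needed to conclude $\tilde v\in\partial_{E^{-1}}TV(u)$ holds trivially wherever $\phi=+\infty$. (Global finiteness does in fact hold, but for a different reason: one may take $\psi$ of finite total variation, so that testing the infimum with $w'=0$ gives $\phi(c')\leq|c'|TV(\psi)<\infty$.) With this small caveat noted, the proposal is a complete and correct proof of a statement the paper leaves unjustified.
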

(In case $n\leq 2$, by $\Delta\operatorname{div}Z$ we understand $\Delta[\operatorname{div}Z]$.) This characterization is important to calculate the solution of $u_t=(-\Delta)\operatorname{div}\left(\nabla u/|\nabla u|\right)$ for $n\leq2$ explicitly. In fact, we can recover the characterization of a solution in the sense of EVI analogous to the one in Theorem \ref{4UE}, amounting to Theorem \ref{MAIN0}.
\begin{proof}[Proof of Theorem \ref{MAIN0}] 	
This is almost immediate. However, like in the case of Theorem \ref{4UE}, we need to justify that the vector field $Z$ can be chosen to be jointly measurable with respect to $(t,x)$. We proceed as in the proof of Theorem \ref{4UE}. The difference is, now we need to pass to the limit with 
\[- \langle u^\varepsilon, \operatorname{div} Z^\varepsilon \rangle_E = TV(u^\varepsilon), \]
but convergence of $v^\varepsilon$ in $L^1_{loc}(]0,\infty[, D^{-1})$ gives only $[\operatorname{div} Z^\varepsilon] \to [\operatorname{div} Z] \in L^1_{loc}(]0,\infty[, D^1_0)$. To deal with this problem, let us choose $\psi \in C^1_c(\mathbb{R}^n)$ and let $c \in \mathbb{R}$ be such that $u(t) - c \psi \in D^{-1}$ for $t>0$. Then we also have $u^\varepsilon(t) - c \psi \in D^{-1}$ for $t>0$, $\varepsilon >0$ and 
\[\langle u^\varepsilon, \operatorname{div} Z^\varepsilon \rangle_E = \langle u^\varepsilon - c \psi, [\operatorname{div} Z^\varepsilon] \rangle + c \int_{\mathbb{R}^n} \psi \operatorname{div} Z^\varepsilon = \langle u^\varepsilon - c \psi, [\operatorname{div} Z^\varepsilon] \rangle - c \int_{\mathbb{R}^n} \nabla \psi \cdot Z^\varepsilon. \]
Testing with $\varphi \in C_c(]0, \infty[)$ and using weak-* convergence of $Z^\varepsilon$ in $L^\infty(]0,\infty[\times \mathbb{R}^n)$ we get 
\begin{multline*}\int_0^\infty \langle u^\varepsilon, \operatorname{div} Z^\varepsilon \rangle_E \, \varphi = \int_0^\infty \langle u^\varepsilon - c \psi, [\operatorname{div} Z^\varepsilon] \rangle \varphi - c \int_0^\infty \int_{\mathbb{R}^n} \nabla \psi \cdot Z^\varepsilon \varphi \\ \to \int_0^\infty \langle u - c \psi, [\operatorname{div} Z] \rangle \varphi - c \int_0^\infty \int_{\mathbb{R}^n} \nabla \psi \cdot Z \varphi = \int_0^\infty \langle u, \operatorname{div} Z \rangle_E \, \varphi,
\end{multline*}
at which point we conclude as in the proof of Theorem \ref{4UE}.
\end{proof}

\section{The notion of calibrability} \label{SCA}

We are interested in sets where the speed of solution $u_t$ is spatially constant. The speed is given as minus the minimal section of the subdifferential, i.\,e.
\[
	\partial^0_{D^{-1}} TV(u) := \argmin \left\{ \|v\|_{D^{-1}} \bigm|
	v \in \partial_{D^{-1}} TV(u) \right\}.
\]
Since $\partial_{D^{-1}} TV(u)$ is closed and convex, $\partial^0_{D^{-1}} TV(u)$ is uniquely determined if $\partial_{D^{-1}} TV(u)\neq\emptyset$.
 Since we have characterized the subdifferential, we end up with 
\begin{multline*}
	\partial^0_{D^{-1}} TV(u) = \argmin \bigl\{ \|v\|_{D^{-1}} \bigm|
	v = \Delta \operatorname{div}Z,\ Z \in L^\infty(\mathbb{R}^n, \mathbb{R}^n),\ |Z| \leq 1,\\
	\operatorname{div}Z \in E^1_0(\mathbb{R}^n), \ -\langle u, \operatorname{div}Z \rangle_E = TV(u)\bigr\}.
\end{multline*}
Although the minimizer $v$ is unique, the corresponding $Z$ may not be unique.
 Let $U$ be a smooth open set in $\mathbb{R}^n$.
 We consider a smooth function $u$ such that 
\[
	\overline{U} = \left\{ x \in \mathbb{R}^n \bigm|
	u(x) = 0 \right\}
\]
and $\partial_{D^{-1}} TV(u)\neq\emptyset$.
 Such a closed set is often called a facet.
 Assume further that $\nabla u\neq0$ outside $\overline{U}$.
 Let $Z$ be a vector field satisfying $v=\Delta\operatorname{div}Z$ for $v\in\partial TV(u)$.
 It is easy to see that outside the facet $\overline{U}$,
\[
	Z(x) = \nabla u(x) \bigm/ \left| \nabla u(x) \right| 
\]
by $- \langle u, \operatorname{div}Z \rangle_E =TV(u)$.
 Since $\|v\|_{D^{-1}}=\|\operatorname{div}Z\|_{D^1_0}$, we see that
\[
	\partial^0_{D^{-1}} TV(u) = \argmin \left\{ \|\operatorname{div}Z\|_{D^1_0} \bigm|
	|Z| \leq 1\ \text{in}\ U,\ Z = \nabla u / |\nabla u|\ \text{in}\ \overline{U}^c,\ 
	\operatorname{div}Z \in E^1_0 \right\}.
\]
Since $\operatorname{div}Z$ is locally integrable, the normal trace is well-defined from inside as an element of $L^\infty(\partial U)$ \cite{ACM} and it must agree with that from outside, i.\,e.
\begin{equation} \label{Znuchi}
	\nu \cdot Z = \nu \cdot \nabla u \bigm/ |\nabla u| = \nu \cdot \chi\nu = \chi,
\end{equation} 
where $\nu(x)$ is the exterior unit normal of $\partial U$ and
\begin{equation*}
	\chi(x)= \left \{
\begin{array}{cl}
	1 &\text{if}\ u>0\ \text{outside}\ \overline{U}\ \text{near}\ x \in \partial U, \\
	-1 &\text{otherwise.}
\end{array}
\right.
\end{equation*}
Since $\operatorname{div}Z$ is in $E^1_0$, its trace from inside and outside must agree, i.\,e.,
\[
	\operatorname{div}Z = \operatorname{div} \left(\nabla u/|\nabla u|\right)=\chi \operatorname{div}\nu =: \chi \kappa \quad \text{on }\partial U. 
\]
Note that $\kappa(x)$ is the sum of all principal curvatures, equal to $n-1$ times the (inward) mean curvature, of $\partial U$ at $x$.

Let $Z_0$ be a minimizer corresponding to $v=\partial^0_{D^{-1}} TV(u)$.
 Since the value $Z_0$ outside $\overline{U}$ is always the same, we consider its restriction on $U$ and still denote by $Z_0$.
 Then, $Z_0$ is a minimizer of
\begin{equation} \label{mini_cahn}
	\left\{ \int_U |\nabla\operatorname{div}Z|^2 \bigm|
	|Z| \leq 1\ \text{in}\ U, \
	\nu\cdot Z= \chi\ \text{on}\ \partial U,\ 
	\operatorname{div}Z = \chi \kappa\ \text{on}\ \partial U \right\}.
\end{equation}

Although $\operatorname{div}Z_0 \in D^1_0(\mathbb{R}^n)$ so that $\nabla\operatorname{div}Z_0 \in L^2(\mathbb{R}^n, \mathbb{R}^n)$, the quantity $\nabla\operatorname{div}Z_0$ may jump across $\partial U$.
 Thus $\Delta\operatorname{div}Z_0$ 
may contain singular part which is a driving force to move the facet boundary ``horizontally'' during its evolution under the fourth-order total variation equation as observed in the previous section and earlier in \cite{GG}.
 In the second-order problem, the speed does not contain any singular part so the jump discontinuity does not move.

We are interested in a situation where $\Delta\operatorname{div}Z_0$ is constant over $U$. In the spirit of \cite{LMM}, we call any continuous function $\chi \colon \partial U \to \{-1,1\}$ a \emph{signature} for $U$. 
\begin{definition} \label{CA1}
Let $U$ be a smooth open set in $\mathbb{R}^n$ with signature $\chi$.
 We say that $U$ is \linebreak 
($D^{-1}$-)\emph{calibrable} (with signature $\chi$) if there exists $Z_0$ satisfying the constraint
\begin{equation} \label{CA_CR} 
	|Z_0|\leq 1 \quad \text{on}\quad  U
\end{equation} 
with boundary conditions
\begin{equation} \label{CA_BC} 
	\nu\cdot Z_0= \chi, \quad
	\operatorname{div}Z_0 = \chi \kappa\ \quad\text{on}\quad \partial U,
\end{equation} 
with the property that 
\begin{equation} \label{CA_CT}
	\Delta\operatorname{div}Z_0 \text{ is constant over } U.
\end{equation} 	
 We call any such $Z_0$ a ($D^{-1}$-)\emph{calibration} for $U$ (with signature $\chi$). 
\end{definition}

From the definition of calibration, we easily deduce  
\begin{prop} \label{5CA}
Let $U$ be a smooth bounded domain in $\mathbb{R}^n$.
 Assume that $Z_0$ is a calibration for $U$ with signature $\chi$. Then, $w_0=\operatorname{div}Z_0$ is a solution to the Saint-Venant problem
\begin{numcases}{}
   -\Delta w = \lambda\quad\text{in}\quad U \label{SV1} \\
   w = \chi \kappa \quad\text{on}\quad \partial U \label{SV2}
\end{numcases}
with the constraint
\begin{equation} \label{AV}
	\int_U w \,\mathrm{d} \mathcal{L}^n = \int_{\partial U}\chi \,\mathrm{d} \mathcal{H}^{n-1},
\end{equation}
where $\lambda$ is some constant.
\end{prop}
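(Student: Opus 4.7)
The plan is essentially to read off each claim directly from the definition of calibration and apply the divergence theorem for the integral constraint; this is a very short argument, with the only subtlety being to justify the boundary integration.

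First, I would set $\lambda := -\Delta \operatorname{div} Z_0$, which by hypothesis \eqref{CA_CT} is a real constant. With $w_0 = \operatorname{div} Z_0$, this immediately gives the PDE \eqref{SV1}. The boundary condition \eqref{SV2} is then literally the second equality in \eqref{CA_BC}, so nothing is to be done there.

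The remaining point is to derive the compatibility condition \eqref{AV}. Here I would simply repeat the computation \eqref{E11} from the introduction. Namely, using that $U$ is a smooth bounded domain and $Z_0 \in L^\infty(U, \mathbb{R}^n)$ with $\operatorname{div} Z_0 \in L^2(U)$ so that the normal trace $\nu \cdot Z_0 \in L^\infty(\partial U)$ is well defined (as recalled just before \eqref{Znuchi}), the divergence theorem gives
\[
\int_U w_0 \,\mathrm{d}\mathcal{L}^n = \int_U \operatorname{div} Z_0 \,\mathrm{d}\mathcal{L}^n = \int_{\partial U} \nu \cdot Z_0 \,\mathrm{d}\mathcal{H}^{n-1} = \int_{\partial U} \chi \,\mathrm{d}\mathcal{H}^{n-1},
\]
where the last identity uses the first part of \eqref{CA_BC}.

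There is essentially no obstacle: the proposition is a direct unpacking of Definition \ref{CA1}. The only point worth flagging is that the integral constraint \eqref{AV} is \emph{not} an extra assumption but a consequence of the other conditions, so the statement should be read as asserting that any calibration automatically produces a Saint-Venant profile whose $L^1$-mean on $U$ is prescribed by the signed perimeter $\int_{\partial U}\chi \,\mathrm{d}\mathcal{H}^{n-1}$. This is what will make $\lambda$ in \eqref{SV1} uniquely determined (by integrating \eqref{SV1} against $1$ and comparing with \eqref{AV}), which is the way the proposition will be used to produce the explicit formula for the ball.
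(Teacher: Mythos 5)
Your proof is correct and matches the approach the paper intends: the paper states the proposition without a written proof (``From the definition of calibration, we easily deduce''), but the key computation \eqref{AV} is exactly the Gauss--Green calculation already carried out in \eqref{E11} in the introduction, and the rest is indeed a direct read-off of Definition \ref{CA1}. Your flag that \eqref{AV} is a consequence rather than an additional hypothesis, and that this is what pins down $\lambda$ via Lemma \ref{5SV}, is also the right way to read the statement.
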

Appealing to this relation between calibrability and the Saint-Venant problem, we can prove the following 
\begin{thm} \label{SCA2}
	Let $U$ be a smooth bounded domain in $\mathbb{R}^n$. Suppose that $Z^*$ is a calibration for $U$ with signature $\chi$. Then $Z^*$ is a minimizer of \eqref{mini_cahn}. 
\end{thm}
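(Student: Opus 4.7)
The plan is to reduce the minimization to a simple quadratic argument by exploiting the Saint-Venant characterization already established in Proposition \ref{5CA}. Set $w^* := \operatorname{div} Z^*$ on $U$. By that proposition, $w^*$ satisfies
\begin{equation*}
-\Delta w^* = \lambda \quad\text{in } U, \qquad w^* = \chi\kappa \quad\text{on } \partial U,
\end{equation*}
for some constant $\lambda$, together with $\int_U w^* \,\mathrm{d}\mathcal{L}^n = \int_{\partial U} \chi \,\mathrm{d}\mathcal{H}^{n-1}$. Now let $Z$ be an arbitrary competitor admissible in \eqref{mini_cahn}, and set $w := \operatorname{div} Z$ on $U$. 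The boundary condition in \eqref{mini_cahn} forces $w = \chi\kappa = w^*$ on $\partial U$. Moreover, the divergence theorem together with $\nu \cdot Z = \chi$ yields
\begin{equation*}
\int_U w \,\mathrm{d}\mathcal{L}^n = \int_{\partial U} \nu \cdot Z \,\mathrm{d}\mathcal{H}^{n-1} = \int_{\partial U}\chi\,\mathrm{d}\mathcal{H}^{n-1} = \int_U w^* \,\mathrm{d}\mathcal{L}^n,
\end{equation*}
so the difference $v := w - w^*$ lies in $H^1_0(U)$ and satisfies $\int_U v = 0$.

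Next, I would expand
\begin{equation*}
\int_U |\nabla w|^2 = \int_U |\nabla w^*|^2 + 2 \int_U \nabla w^* \cdot \nabla v + \int_U |\nabla v|^2
\end{equation*}
and integrate the cross term by parts. Since $v \in H^1_0(U)$ and $-\Delta w^* = \lambda$, there is no boundary contribution and
\begin{equation*}
\int_U \nabla w^* \cdot \nabla v = -\int_U v\, \Delta w^* = \lambda \int_U v = 0.
\end{equation*}
Discarding the non-negative term $\int_U |\nabla v|^2$ gives $\int_U|\nabla \operatorname{div} Z|^2 \geq \int_U|\nabla \operatorname{div} Z^*|^2$, which is precisely the conclusion.

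The main technical point to verify is that the integration by parts is actually legitimate, i.e.\ that $w^*$ and $w$ admit $H^1(U)$ traces on $\partial U$ coinciding with $\chi\kappa$. Since $\operatorname{div} Z \in E_0^1 \subset H^1_{\mathrm{loc}}(\mathbb{R}^n)$ and $U$ is bounded with smooth boundary, $w,w^* \in H^1(U)$ and standard trace theory applies; the identification of the trace with $\chi\kappa$ comes from the continuity of the normal derivative argument given just before Definition \ref{CA1}. I note that the pointwise constraint $|Z| \leq 1$ plays no role in the argument, so in fact $Z^*$ minimizes $\int_U |\nabla \operatorname{div} Z|^2$ over the larger class of vector fields satisfying only the boundary conditions \eqref{CA_BC} and having $\operatorname{div} Z \in H^1(U)$.
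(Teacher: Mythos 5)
Your proof is correct, and it is a more direct route than the one taken in the paper. Both arguments rest on the same key inputs: Proposition \ref{5CA} (a calibration's divergence solves the Saint-Venant problem \eqref{SV1}--\eqref{SV2} with the integral constraint \eqref{AV}), and the observation that every admissible competitor $Z$ in \eqref{mini_cahn} has $w=\operatorname{div}Z$ satisfying the same Dirichlet boundary data and the same integral constraint via the divergence theorem. Where the two proofs diverge is in how they exploit this. The paper treats the Dirichlet energy $e(w)=\int_U|\nabla w|^2$ as a separate constrained minimization over scalar functions $w$: it argues existence and uniqueness of a minimizer $\overline w$ by strict convexity, derives the Euler--Lagrange equation $-\Delta\overline w = \lambda$ via a Lagrange multiplier, invokes elliptic regularity, and uses Lemma \ref{5SV} to pin down $\lambda$ so that $\overline w$ must coincide with $w_*$; it then solves an auxiliary Neumann problem to show the scalar minimum is actually attained by some vector field $Z$ with the right normal trace, thereby bridging back to the vector-field problem \eqref{mini_cahn}. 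You bypass all of this: since $v = w - w^*$ lies in $H^1_0(U)$ with zero mean and $-\Delta w^*$ is constant, a one-line integration by parts kills the cross term in the quadratic expansion, and the inequality follows immediately. Your approach buys brevity and avoids the auxiliary Neumann-problem construction; the paper's longer route yields the extra information that the value of \eqref{mini_cahn} equals the unconstrained Dirichlet minimum (so the constraint $|Z|\leq 1$ is inactive at the level of the optimal value), a fact you also observe at the end but establish only in the direction needed. Your remark on trace regularity ($w,w^*\in H^1(U)$ via $\operatorname{div}Z\in E^1_0\subset H^1_{\mathrm{loc}}$) correctly addresses the only technical point that could have been glossed over.
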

\begin{proof}
	We first note that $w_*=\operatorname{div}Z_*$ must satisfy \eqref{AV}.	We consider the minimization problem for 
	\[e(w) = \int_U |\nabla w|^2\]
	under the Dirichlet condition \eqref{SV2} and the constraint \eqref{AV}.
	Since the problem is strictly convex, there is a unique minimizer $\overline{w}$ in $D^1_0(U)$.
	By Lagrange's multiplier method, $\overline{w}$ must satisfy \eqref{SV1} because of the constraint \eqref{AV}.
	(Actually, a weak solution $\overline{w}$ of \eqref{SV1} is a smooth solution of \eqref{SV1}, \eqref{SV2} by the standard regularity theory of linear elliptic partial differential equations \cite[Chapter 6]{GT}.)
	As we see in Lemma \ref{5SV} below, the constant $\lambda$ is uniquely determined by \eqref{SV1} and \eqref{AV}.
		For $\overline{w}$, there always exists $Z\in C^\infty(\overline{U})$ such that
		\begin{equation} \label{Div}
			\operatorname{div}Z = \overline{w} \quad\text{in}\quad U, \quad
			\nu \cdot Z = \chi \quad\text{on}\quad \partial U.
		\end{equation}
		Indeed, let $p$ be a solution of the Neumann problem
		\[
		\Delta p = \overline{w} \quad\text{in}\quad U, \quad
		\nu \cdot \nabla p = \chi \quad\text{on}\quad \partial U.
		\]
		Such a solution $p$ always exists since $\overline{w}$ satisfies the compatibility condition \eqref{AV} and it is smooth up to $\overline{U}$; see e.\,g.\ \cite{Gal, GT}.
		If we set $Z=\nabla p$, then $Z$ satisfies the desired property \eqref{Div}.
		Thus, the minimum $e(\overline{w})$ of the Dirichlet energy under the constraint \eqref{AV} agrees with
		\[
		\min \left\{ \int_U |\nabla\operatorname{div}Z|^2 \biggm|
		\nu \cdot Z = \chi,\ \operatorname{div}Z=\chi \kappa\ \text{on}\ \partial U \right\}.
		\]
		Since $w_*=\overline{w}$ and $|Z_*|\leq1$, this shows that $Z_*$ is a minimizer of \eqref{mini_cahn}.
\end{proof}
\begin{lem} \label{5SV}
Let $U$ be a smooth bounded domain in $\mathbb{R}^n$.
 Let $w$ solve
\begin{align*}
	-\Delta w &= \lambda \quad{in}\quad U \\
	w &= f \quad{on}\quad \partial U	
\end{align*}
for $\lambda\in\mathbb{R}$, $f\in C(\partial U)$.
 This solution can be written as
\[
	w = \lambda w_{\mathrm{sv}} + h_f
\]
where $w_{\mathrm{sv}}$ solves the Saint-Venant problem
\begin{align} \label{sv1}
	-\Delta w_{\mathrm{sv}} &= 1 \quad{in}\quad U \\
	w_{\mathrm{sv}} &= 0 \quad{on}\quad \partial U	
\end{align}
and $h_f$ is the harmonic extension of $f$ to $U$.
 In particular, if $\int_U w =c$ is given then $\lambda$ is uniquely determined by
\begin{equation} \label{lambda_uniq} 
	\lambda \int_U w_{\mathrm{sv}} + \int_U h_f = c,
\end{equation}
since $w_{\mathrm{sv}}>0$ in $U$.
\end{lem}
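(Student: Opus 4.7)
The plan is to exploit the linearity of the Laplace operator together with the strong maximum principle. This is essentially a decomposition argument.

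First I would verify the superposition formula $w = \lambda w_{\mathrm{sv}} + h_f$. Set $v = w - \lambda w_{\mathrm{sv}} - h_f$. By linearity, $-\Delta v = \lambda - \lambda \cdot 1 - 0 = 0$ in $U$ (using harmonicity of $h_f$), and $v = f - 0 - f = 0$ on $\partial U$. Uniqueness of the classical Dirichlet problem for the Laplace equation (a standard consequence of the weak maximum principle applied to $\pm v$) forces $v \equiv 0$ in $U$, yielding the claimed decomposition.

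Integrating this identity over $U$ immediately gives $\int_U w = \lambda \int_U w_{\mathrm{sv}} + \int_U h_f$, which is relation \eqref{lambda_uniq}. To conclude that $\lambda$ is uniquely determined when $c = \int_U w$ is prescribed, it remains to show $\int_U w_{\mathrm{sv}} \neq 0$; this will follow from the strict positivity statement $w_{\mathrm{sv}} > 0$ in $U$.

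For that positivity, I would invoke the strong maximum principle. The function $w_{\mathrm{sv}}$ satisfies $-\Delta w_{\mathrm{sv}} = 1 \geq 0$ in $U$, so it is superharmonic; equivalently, $-w_{\mathrm{sv}}$ is subharmonic and nonconstant (since $-\Delta w_{\mathrm{sv}} = 1 \not\equiv 0$). As $w_{\mathrm{sv}} = 0$ on $\partial U$, the weak maximum principle gives $w_{\mathrm{sv}} \geq 0$ in $U$, and then the strong maximum principle applied to $-w_{\mathrm{sv}}$ rules out an interior zero unless $w_{\mathrm{sv}}$ is constant, which it is not. Hence $w_{\mathrm{sv}}>0$ in $U$, so $\int_U w_{\mathrm{sv}} > 0$ and $\lambda$ is uniquely determined by \eqref{lambda_uniq}.

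There is no genuine obstacle here; the only point requiring a touch of care is citing (rather than re-proving) the regularity needed to make sense of $w_{\mathrm{sv}}$ and $h_f$ as classical solutions, which is standard elliptic theory on smooth bounded domains (see e.g.\ \cite{GT}).
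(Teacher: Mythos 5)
Your proof is correct and follows the same route as the paper: the paper simply states that the decomposition is clear and that positivity follows from the maximum principle, while you spell out the linearity/uniqueness argument for the decomposition and apply the weak and strong maximum principles explicitly for $w_{\mathrm{sv}}>0$. No gaps; you have just filled in the details the paper leaves implicit.
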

\begin{proof}
The decomposition $w=\lambda w_{\mathrm{sv}}+h_f$ is rather clear.
 The property $w_{\mathrm{sv}}>0$ in $U$ follows from the maximum principle \cite{GT}.
\end{proof} 

From Lemma \ref{5SV} we also deduce the following formula for vertical speed of calibrable facets in terms of the solution to the Saint-Venant problem.  
\begin{prop} \label{5SValt}
	Let $U$ be a smooth bounded domain in $\mathbb{R}^n$ and $\lambda \in \mathbb{R}$. If $Z$ is a calibration for $U$ with signature $\chi$ satisfying 
	\[-\Delta \operatorname{div} Z = \lambda,\]
	then 
	\[\lambda \int_U w_{\mathrm{sv}} = \int_{\partial U} \chi\kappa\, \nu \cdot \nabla w_{\mathrm{sv}} + \int_{\partial U} \chi, \]
	where $w_{\mathrm{sv}}$ is the solution to the Saint-Venant problem \eqref{sv1}. 
\end{prop}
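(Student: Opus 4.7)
The plan is to derive the claimed identity by a straightforward double integration by parts, pairing the equation satisfied by $w=\operatorname{div} Z$ against the Saint-Venant potential $w_{\mathrm{sv}}$. Since both functions are smooth up to $\overline{U}$ (by standard elliptic regularity applied to \eqref{SV1}--\eqref{SV2} and to \eqref{sv1}), all boundary integrals are classical.

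First I would write
\[\lambda \int_U w_{\mathrm{sv}} = \int_U (-\Delta w)\, w_{\mathrm{sv}}\]
using $-\Delta w = \lambda$ from the hypothesis. Applying Green's first identity gives
\[\int_U (-\Delta w)\, w_{\mathrm{sv}} = \int_U \nabla w \cdot \nabla w_{\mathrm{sv}} - \int_{\partial U} (\nu \cdot \nabla w)\, w_{\mathrm{sv}}\, \mathrm{d}\mathcal{H}^{n-1},\]
and the boundary integral vanishes because $w_{\mathrm{sv}}=0$ on $\partial U$ by definition. Integrating by parts once more in the other direction yields
\[\int_U \nabla w \cdot \nabla w_{\mathrm{sv}} = -\int_U w\, \Delta w_{\mathrm{sv}} + \int_{\partial U} w\, (\nu \cdot \nabla w_{\mathrm{sv}})\, \mathrm{d}\mathcal{H}^{n-1} = \int_U w + \int_{\partial U} w\, (\nu \cdot \nabla w_{\mathrm{sv}})\, \mathrm{d}\mathcal{H}^{n-1},\]
where I have used $-\Delta w_{\mathrm{sv}} = 1$ from \eqref{sv1}.

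To conclude, I would substitute the calibration boundary conditions \eqref{CA_BC}: on $\partial U$ one has $w = \operatorname{div} Z = \chi\kappa$, which handles the boundary integral. For the bulk term $\int_U w$, I apply the divergence theorem together with $\nu \cdot Z = \chi$ on $\partial U$:
\[\int_U w = \int_U \operatorname{div} Z = \int_{\partial U} \nu \cdot Z\, \mathrm{d}\mathcal{H}^{n-1} = \int_{\partial U} \chi\, \mathrm{d}\mathcal{H}^{n-1}.\]
Combining these identities gives exactly the claimed formula. There is no substantial obstacle here; the only thing to keep in mind is that the calibration boundary data in \eqref{CA_BC} provides both the Dirichlet value of $w$ (for the boundary term) and the Neumann-type average of $Z$ (for the volume term), which is precisely what makes the two right-hand-side terms appear.
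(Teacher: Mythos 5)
Your proof is correct. You pair the equation $-\Delta w = \lambda$ (with $w = \operatorname{div} Z$) directly against $w_{\mathrm{sv}}$ and integrate by parts twice, whereas the paper first invokes the decomposition $w = \lambda w_{\mathrm{sv}} + h_f$ from Lemma~\ref{5SV}, recalls the identity \eqref{lambda_uniq}, and then computes $\int_U h_f$ by the same sort of Green's-identity calculation you perform on $w$. The two arguments are interchangeable — both reduce to two applications of Green's identity plus $\int_U \operatorname{div} Z = \int_{\partial U} \chi$ — but your version is marginally more self-contained since it bypasses the $h_f$ decomposition and the preparatory lemma. The paper's route is natural in context because Lemma~\ref{5SV} has already been established (for instance to show $\lambda$ is uniquely determined), so there the decomposition comes for free.
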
 
\begin{proof} 
	We recall \eqref{lambda_uniq} and calculate 
	\[\int_U h_f = - \int_U \Delta w_{\mathrm{sv}} h_f = - \int_{\partial U} \nu \cdot \nabla w_{\mathrm{sv}}\, \chi \operatorname{div} \nu + \int_U \nabla w_{\mathrm{sv}} \cdot \nabla h_f, \]
	\[\int_U \nabla w_{\mathrm{sv}} \cdot \nabla h_f = \int_{\partial U}  w_{\mathrm{sv}} \nu \cdot \nabla h_f - \int_U w_{\mathrm{sv}} \Delta \nabla h_f = 0,\]
	\[c = \int_U \operatorname{div} Z = \int_{\partial U} \chi . \]
\end{proof} 

We now compare the definition of calibrability for the second-order problem.
\begin{definition} \label{CA2}
Let $U$ be a smooth open set in $\mathbb{R}^n$ with signature $\chi$.
 We say that $\overline{U}$ is \linebreak 
($L^2$-)\emph{calibrable} if there is $Z_0$ satisfying the constraint $|Z_0|\leq1$ in $U$ and the boundary condition $\nu\cdot Z_0=\chi$ with the property that $\operatorname{div}Z_0$ is a constant over $U$.
\end{definition} 
This definition is slightly weaker than the calibrability used in \cite{L}, where $a(t)\mathbf{1}_U$ is a solution of the total variation flow in $\mathbb{R}^n$ with some function $a(t)$ of $t$; see also \cite{ACM}.
 This requires that $\partial^0_{L^2}TV(u)$ is constant not only on $U$ but also $U^c$.
 Our definition follows from that of \cite{BNP01c}.
\section{Calibrability of rotationally symmetric sets} \label{examples}
\begin{definition} \label{annulus} 
	We say that a Lebesgue measurable subset $U$ (defined up to a set of measure zero) of $\mathbb{R}^n$ is a \emph{generalized annulus} if $U$ is non-empty, open, connected and rotationally symmetric, i.\,e.\ invariant under the linear action of $SO(n)$ on $\mathbb{R}^n$. 
\end{definition} 
	It is easy to see that any generalized annulus is a ball, an annulus, the complement of a ball or the whole space $\mathbb{R}^n$. In other words, any generalized annulus is of form 
	\[A_{R_0}^{R_1} = \{x \in \mathbb{R}^n \colon R_0 < |x| < R_1\} \text{ with } 0 < R_0 < R_1 \leq \infty \quad \text{or}\quad  A_0^R = B_R \text{ with }R>0. \]
	In this section we will settle the question which generalized annuli are calibrable. 
\begin{lem} \label{symmetry} 
	Let $U$ be a generalized annulus. Suppose that $U$ is calibrable with signature $\chi$. Then there exists a calibration $\overline{Z}$ for $(U, \chi)$ of form $\overline{Z}(x) = z(|x|)\frac{x}{|x|}$. 
\end{lem}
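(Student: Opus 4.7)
The natural strategy is to symmetrize the given calibration $Z$ by averaging over the rotation group. For $R \in SO(n)$, define the pulled-back vector field
\[Z_R(x) := R^\top Z(Rx), \quad x \in U,\]
and set
\[\overline{Z}(x) := \int_{SO(n)} Z_R(x)\, \mathrm{d}R,\]
where $\mathrm{d}R$ is the normalized Haar measure on $SO(n)$. I will verify in turn that (a) each $Z_R$ is again a calibration for $(U,\chi)$, (b) the Haar average preserves all calibration properties, and (c) $\overline{Z}$ is $SO(n)$-equivariant, hence of the claimed radial form.

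For (a), the bound $|Z_R|\leq 1$ is immediate since $R \in O(n)$ preserves lengths. A direct chain-rule computation shows that pulling back by an isometry transforms divergence covariantly, $\operatorname{div} Z_R(x) = (\operatorname{div} Z)(Rx)$, and iterating, $\Delta\operatorname{div} Z_R(x) = (\Delta\operatorname{div} Z)(Rx)$. Since $U$ is a generalized annulus, $\partial U$ is a (disjoint) union of spheres centered at the origin, on which $\nu(Rx) = R\nu(x)$, the quantity $\kappa$ is $SO(n)$-invariant, and $\chi$ (being continuous and $\{\pm 1\}$-valued, hence constant on each connected component of $\partial U$) is likewise $SO(n)$-invariant. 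Consequently,
\[\nu(x)\cdot Z_R(x) = R\nu(x)\cdot Z(Rx) = \nu(Rx)\cdot Z(Rx) = \chi(Rx) = \chi(x),\]
\[\operatorname{div} Z_R(x) = (\operatorname{div} Z)(Rx) = \chi(Rx)\kappa(Rx) = \chi(x)\kappa(x)\]
on $\partial U$, and $\Delta\operatorname{div} Z_R$ is constant on $U$ since $\Delta\operatorname{div} Z$ is. So $Z_R$ is a calibration.

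For (b), convexity of the closed unit ball (Jensen) yields $|\overline{Z}|\leq 1$ a.\,e., while linearity combined with Fubini -- justified by the uniform bounds $\|Z_R\|_\infty \leq 1$ and $\|\operatorname{div} Z_R\|_{E^1_0} = \|\operatorname{div} Z\|_{E^1_0}$ (both rotationally invariant) -- allows differentiation in $x$ to commute with integration in $R$, and trace-taking on $\partial U$ to commute with the Haar average. The boundary conditions \eqref{CA_BC}, the membership $\operatorname{div}\overline{Z} \in E^1_0$, and the constancy of $\Delta\operatorname{div}\overline{Z}$ therefore descend to $\overline{Z}$. For (c), the substitution $R \mapsto RS^{-1}$ in the Haar integral (bi-invariance, since $SO(n)$ is compact) together with $R^\top = SR'^\top$ yields
\[\overline{Z}(Sx) = S\overline{Z}(x) \qquad \text{for every } S \in SO(n).\]
Evaluating at $x = re_1$ and letting $S$ range over the stabilizer $SO(n-1)$ of $e_1$, which acts irreducibly on $e_1^\perp$, the only vectors fixed are those in $\mathbb{R}e_1$; writing $\overline{Z}(re_1) = z(r)e_1$, equivariance under all of $SO(n)$ propagates this to $\overline{Z}(x) = z(|x|)\frac{x}{|x|}$ for $x\neq 0$. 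The case $n=1$ is essentially trivial, since a generalized annulus in $\mathbb{R}$ that does not contain $0$ avoids the sign ambiguity and any scalar field has the required form; the remaining case $U$ symmetric about $0$ can be treated by averaging over $O(1)=\{\pm 1\}$ instead.

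The main technical obstacle is the rigorous justification in (b) that the Haar integral commutes with the trace and weak-divergence operators in the function-space setting of Definition \ref{CA1} -- that is, checking that $\nu\cdot\overline{Z}$ and $\operatorname{div}\overline{Z}|_{\partial U}$ are the Haar averages of the corresponding traces of $Z_R$. This is standard, following from Fubini together with the rotation-invariance of the norms on $L^\infty(U)$ and $E^1_0$, but it is where care with the definitions is required.
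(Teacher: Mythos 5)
Your proposal follows essentially the same strategy as the paper (symmetrize by averaging over $SO(n)$ with respect to Haar measure, check that calibration properties and the pointwise bound survive convex averaging, then extract the radial form from $SO(n)$-equivariance), and parts (a) and (b) are sound. However, there is a genuine gap in step (c) at $n=2$. You argue that $\overline{Z}(Sx)=S\overline{Z}(x)$ for all $S\in SO(n)$, evaluate at $x=re_1$, and conclude that $\overline{Z}(re_1)\in\mathbb{R}e_1$ because the stabilizer $SO(n-1)$ of $e_1$ has no nonzero fixed vector in $e_1^\perp$. This is correct only for $n\geq 3$. When $n=2$, the stabilizer of $e_1$ in $SO(2)$ is trivial, so the fixed-vector argument gives no constraint at all, and in fact there \emph{is} a nonzero $SO(2)$-equivariant vector field that is everywhere tangent to circles, namely $e^T(x)=(x_2,-x_1)$. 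So equivariance alone does not force $\overline{Z}$ to be radial in the plane; a priori one only gets
\[
\overline{Z}(x) = z(|x|)\tfrac{x}{|x|} + z^T(|x|)\,e^T(x).
\]
The paper's proof closes this gap with an extra observation specific to $n=2$: since $\operatorname{div}e^T=0$ and $\tfrac{x}{|x|}\cdot e^T(x)=0$, the tangential component $\overline{Z}\vphantom{Z}^T$ contributes neither to $\operatorname{div}\overline{Z}$ nor to the normal trace, so it can simply be dropped and $\overline{Z}\vphantom{Z}^\perp$ is still a calibration. Without some such argument your proof does not establish the lemma in dimension $2$, which is needed later for the calibrability analysis of planar annuli. (Your treatment of $n=1$ via $O(1)$-averaging is fine and in fact slightly more careful than the paper's.)
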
 
\begin{proof} 
	Let $Z$ be any calibration for $(U, \chi)$. Let $\mu_n$ be the Haar measure on $S\!O(n)$. We define $\overline{Z}$ as the average 
	\[ \overline{Z}(x) = \int L Z(L^{-1} x) \,\mathrm{d} \mu_n(L). \]
	It is an exercise in vector calculus to check that $\overline{Z}$ satisfies boundary conditions \eqref{CA_BC} and that $\Delta \operatorname{div} \overline{Z}$ is a constant (equal to $\Delta \operatorname{div} Z$) on $U$. By convexity, $|\overline{Z}|\leq 1$. 
	Thus $\overline{Z}$ is a calibration for $(U, \chi)$. By definition, it is invariant under rotations, i.\,e.
	\[L\overline{Z}(L^{-1}x) = \overline{Z}(x)\]
	for $L \in S\!O(n)$, $x \in \mathbb{R}^n$. In the case $n=1$ this already shows that $\overline{Z}$ is in the desired form. In higher dimensions, we consider the orthogonal decomposition 
	\[ \overline{Z}(x) = \overline{Z}\vphantom{Z}^\perp(x) + \overline{Z}\vphantom{Z}^T(x) : = \frac{x}{|x|} \otimes \frac{x}{|x|} \, \overline{Z}(x) + \left(I - \frac{x}{|x|} \otimes \frac{x}{|x|}\right) \overline{Z}(x).\] 
	Both $\overline{Z}\vphantom{Z}^\perp$ and $\overline{Z}\vphantom{Z}^T$ are invariant under rotations. In particular, for any given $R>0$, the restriction of $\overline{Z}\vphantom{Z}^T$ to $\mathbb{S}^{n-1}_R$ is an invariant tangent vector field on $\mathbb{S}^{n-1}_R$. Note that any such vector field is smooth. If $n=3$, it follows by the hedgehog uncombability theorem \cite[Proposition 7.15]{Ful} that $\overline{Z}\vphantom{Z}^T \equiv 0$. If $n>3$, any vector field invariant on $\mathbb{S}^{n-1}$ is in particular invariant on a sphere $\mathbb{S}^2$ containing any given point in $\mathbb{S}^{n-1}$, so the same conclusion follows. Thus, we have
	\[\overline{Z}(x) = \overline{Z}\vphantom{Z}^\perp(x) = \frac{x}{|x|} \otimes \frac{x}{|x|} \, \overline{Z}(x) = \frac{x}{|x|} \cdot \overline{Z}(x)\, \frac{x}{|x|}=: \overline{z}(x) \frac{x}{|x|}.\] 
	By rotational invariance, we have $\overline{z}(x) = z(|x|)$, which concludes the proof. 
	 
	We are left with the case $n=2$ in which there exists a one-dimensional space of invariant tangent fields on $\mathbb{S}^{n-1} = \mathbb{S}^1$ spanned by $e^T(x) : = (x_2, - x_1)$. Thus, we have 
	\[\overline{Z}\vphantom{Z}^T(x) = z^T(|x|) e^T(x).\]
	We calculate 
	\[ \operatorname{div} \overline{Z}\vphantom{Z}^T(x) = z^T(|x|) \operatorname{div} e^T(x) + (z^T)'(|x|) \frac{x}{|x|} \cdot e^T(x) = 0. \]
	Thus, we can disregard $\overline{Z}\vphantom{Z}^T$ and choose $\overline{Z}\vphantom{Z}^\perp$ as our calibration, since it satisfies conditions \eqref{CA_CR}-\eqref{CA_CT} (recall that $\overline{Z}\vphantom{Z}^T$ and $\overline{Z}\vphantom{Z}^\perp$ are orthogonal). 
	As before, we see that 
	\[\overline{Z}\vphantom{Z}^\perp(x) = z(|x|) \frac{x}{|x|}.\] 
\end{proof} 
Let $U$ be a generalized annulus. By Lemma \ref{symmetry}, if $U$ is calibrable, then there exists a calibration $Z$ for $U$ of form $Z = z(|x|)\frac{x}{|x|}$. It follows from \eqref{SV1} that $z$ needs to satisfy the ODE
\begin{equation} \label{ode_z}
	- r^{1-n} \left(r^{n-1} \left(r^{1-n} \left(r^{n-1} z\right)'\right)'\right)' = \lambda.
\end{equation} 
The general solution to this ODE is 
\begin{equation} \label{gen_soln_z} 
	z(r) = c_0 r^3 + c_1 r^{3-n} + c_2 r + c_3 r^{1-n} 
\end{equation} 
where $c_0 = - \frac{\lambda}{2n(n+2)}$ if $n\neq 2$ and 
\begin{equation} \label{gen_soln_z2}  
	z(r) = c_0 r^3 + c_1 r \log r  + c_2 r + c_3 r^{-1}. 
\end{equation} 	
where $c_0 = - \frac{\lambda}{16}$ if $n=2$. We will now try to find a calibration for $U$ by solving a suitable boundary value problem for \eqref{ode_z}.  

\subsection{Balls} 
Let $U = B_R(0)$. To focus attention, we choose $\chi = -1$ on $\partial U$. In this case, boundary conditions \eqref{CA_BC} lead to 
\begin{equation} \label{z_ball_bc}  
	z(R) = -1, \quad z'(R)=0. 
\end{equation} 
If $n\geq 2$, in order to satisfy the requirements $|Z|\leq 1$ and $\nabla \operatorname{div} Z$, we need to restrict to $c_1 = c_3 = 0$ in \eqref{gen_soln_z}. We make the same choice also in case $n=1$, as it leads to the right result. Then, applying \eqref{z_ball_bc} in \eqref{gen_soln_z} or \eqref{gen_soln_z2}, we obtain a system of two affine equations for two unknowns $\lambda, c_2$. We solve it obtaining 
\begin{equation} \label{z_ball} 
	z(r) =  \frac{1}{2} \left(\frac{r}{R}\right)^3  -\frac{3}{2} \frac{r}{R},  
\end{equation} 
\begin{equation} \label{lambda_ball} 
	\lambda = - \frac{n(n+2)}{R^3}. 
\end{equation}
We check that $z$ satisfies $|z|\leq 1$ on $[0,R]$, so $Z$ is a calibration for $B_R$. Thus, all balls are calibrable in any dimension.  

\subsection{Complements of balls} 

Let $U = \mathbb{R}^n \setminus B_R$. For consistency with the previous case, we choose $\chi = 1$ on $\partial U$. In this case, boundary conditions \eqref{CA_BC} also lead to 
\begin{equation} \label{z_cball_bc}  
	z(R) = -1, \quad z'(R)=0. 
\end{equation} 
Let us first assume that $n \geq 3$. In order to satisfy the requirement $|Z|\leq 1$, we need to restrict to $\lambda = c_2 = 0$ in \eqref{gen_soln_z}. Again, applying \eqref{z_ball_bc} in \eqref{gen_soln_z} leads to a system of two affine equations for two unknowns $c_1, c_3$. We solve it obtaining 
\begin{equation} \label{z_cball} 
	z(r) =  - \frac{n-1}{2} \left(\frac{r}{R}\right)^{3-n} + \frac{n-3}{2} \left(\frac{r}{R}\right)^{1-n}. 
\end{equation} 
Again, we easily check that $z$ satisfies $|z|\leq 1$ on $[0,R]$, so $Z$ is a calibration for $B_R$. 

In the omitted cases $n=1,2$, requirement $|Z|\leq 1$ implies $\lambda = c_1 = c_2 = 0$ in \eqref{gen_soln_z}. If $n=1$, there exists $z$ of such form satisfying \eqref{z_cball_bc}: $z(r) \equiv -1$, consistently with \eqref{z_cball}. On the other hand, if $n=2$, applying \eqref{z_cball_bc} to \eqref{gen_soln_z} with $\lambda = c_1 = c_2 = 0$ leads to a contradiction. 

Summing up, all complements of balls are calibrable if $n\neq 2$. On the other hand, if $n=2$ all complements of balls turn out not to be calibrable. 

\subsection{Annuli} 

Let now $U = A_{R_0}^{R_1} = B_{R_1}\setminus B_{R_0}$, $0<R_0 < R_1$. In this case $\partial U$ has two connected components, so there exist two distinct choices of signature: constant and non-constant. Let us first consider the former. To focus attention, we choose $\chi \equiv -1$. Then, boundary conditions \eqref{CA_BC} take form
\begin{equation} \label{z_annulus_bc}  
	z(R_0) = 1, \quad z(R_1) = -1, \quad z'(R_0) = z'(R_1) = 0. 
\end{equation} 
Applying \eqref{z_annulus_bc} to \eqref{gen_soln_z} or \eqref{gen_soln_z2} leads to a system of four affine equations with four unknowns. In the case $n \neq 2$, the solution is 
\begin{align} \begin{split} \label{annulus_soln}
	c_0 &= \tfrac{- (R_1-R_0) R_1^n R_0^n \left( (n-1)(n-2) (R_1+R_0)^2 - 2 R_0 R_1\right)+2 R_1^3 R_0^{2 n}-2 R_0^3 R_1^{2 n}}{R_1 R_0 \left(R_1^n R_0^n \left(n^2 \left(R_1^2-R_0^2\right)^2+8 R_1^2 R_0^2\right)-4 R_1^2 R_0^{2 n+2}-4 R_0^2 R_1^{2 n+2}\right)}, \\
	c_1 &= \tfrac{(R_1+R_0) \left(R_1^n \left(2 (n-1) R_1^2 + (n+2) R_1 R_0 - (n+2) R_0^2 \right) + R_0^n \left((n+2)R_1^2 - (n+2)R_1 R_0 - 2(n-1)R_0^2 \right)\right)}{2 \left(n^2-4\right) R_1^3 R_0^3 + 4 R_1^{3-n} R_0^{n+3} + 4 R_1^{n+3} R_0^{3-n} - n^2 R_1^5 R_0 - n^2 R_1 R_0^5},\\
	c_2 &= \tfrac{(R_1-R_0) R_1^n R_0^n \left(6 R_1^2 R_0^2 + (n-1) n R_1^3 R_0 + (n-1) n R_1^4 + (n-1) n R_1 R_0^3 + (n-1) n R_0^4 \right) - 6 R_1^3 R_0^{2 n+2} + 6 R_0^3 R_1^{2 n+2}}{R_1 R_0 \left(R_1^n R_0^n \left(n^2 \left(R_1^2-R_0^2\right)^2+8 R_1^2 R_0^2\right)-4 R_1^2 R_0^{2 n+2}-4 R_0^2 R_1^{2 n+2}\right)},\\
	c_3 &= \tfrac{(R_1 + R_0) \left(R_0^2 R_1^n \left(-2 (n-3) R_1^2 - n R_1 R_0 + n R_0^2\right)- R_1^2 R_0^n \left(n R_1^2 - R_0 (n R_1 + 2(n-3)R_0)\right)\right)}{2 \left(n^2-4\right) R_1^3 R_0^3 + 4 R_1^{3-n} R_0^{n+3} + 4 R_1^{n+3} R_0^{3-n} - n^2 R_1^5 R_0 - n^2 R_1 R_0^5}.
	\end{split} 
\end{align} 
This can be rewritten in a form emphasizing homogeneity: 
\begin{align} \begin{split} 
	c_0 &= \tfrac{- (Q-1) Q^n \left((n-1)(n-2) (Q+1)^2 - 2Q\right)+2 Q^3 - 2  Q^{2 n}}{ Q^{n-2}  \left(n^2 \left(Q^2-1\right)^2+8 Q^2 \right)-4 - 4 Q^{2 n}} R_1^{-3}, \\
	c_1 &= \tfrac{(Q+1) \left(Q^n \left(2 (n-1) Q^2 + (n+2) Q  - (n+2) \right) + \left((n+2)Q^2 - (n+2)Q - 2(n-1) \right)\right)}{2 \left(n^2-4\right) Q^n + 4 + 4 Q^{2n} - n^2 Q^{n+2} - n^2 Q^{n-2} } R_1^{n-3},\\
	c_2 &= \tfrac{(Q-1) Q^n \left(6 Q^2 + (n-1) n Q^3 + (n-1) n Q^4 + (n-1) n Q + (n-1) n \right) - 6 Q^3 + 6 Q^{2 n+2}}{Q^n \left(n^2 \left(Q^2-1\right)^2+8 Q^2 \right)-4 Q^2 -4  Q^{2 n+2}} R_1^{-1},\\
	c_3 &= \tfrac{(Q + 1) \left( Q^n \left(-2 (n-3) Q^2 - n Q + n \right)- Q^2 \left(n Q^2 - (n Q + 2(n-3))\right)\right)}{2 \left(n^2-4\right) Q^{n+2} + 4 Q^{2} + 4 Q^{2n+2} - n^2 Q^{n+4} - n^2 Q^n} R_1^{n-1},
\end{split} 
\end{align} 
where we denoted $Q = R_1/R_0$. We can further simplify it to 
\begin{align} \begin{split} \label{annulus_soln_homo}
		c_0 &= \tfrac{2Q^3(Q^{2n-3}-1) + (Q-1)Q^n ( (n-1)(n-2)(Q+1)^2 - 2Q )}{4(Q^n-1)^2 - n^2(Q^2-1)^2Q^{n-2}} R_1^{-3}, \\
		c_1 &= \tfrac{(Q+1) ( 2(n-1)(Q^{n+2}-1)+(n+2)Q(Q-1)(Q^{n-1}+1) )}{4(Q^n-1)^2 - n^2(Q^2-1)^2Q^{n-2}} R_1^{n-3}, \\
		c_2 &= -\tfrac{6Q(Q^{2n-1}-1) + (Q-1)Q^{n-2} ( 6Q^2 + n(n-1)(1+Q)(1+Q^3) )}{4(Q^n-1)^2 - n^2(Q^2-1)^2Q^{n-2}} R_1^{-1}, \\
		c_3 &= -\tfrac{(Q+1) ( 2(n-3)(Q^n-1)+nQ(Q-1)(Q^{n-3}+1) )}{4(Q^n-1)^2 - n^2(Q^2-1)^2Q^{n-2}} R_1^{n-1}.
	\end{split} 
\end{align}
We need to check whether condition $|Z|\leq 1$ is satisfied. We calculate 
\begin{multline}\label{z''} 
	z''(r) = 6 c_0 r + (n-3)(n-2) c_1 r^{1-n} + n(n-1) c_3 r^{-n-1}\\ = r^{-n-1} (6 c_0 r^{n+2} + (n-3)(n-2) c_1 r^2 + n(n-1) c_3) =: r^{-n-1} w(r).
\end{multline} 
Using the form \eqref{annulus_soln_homo}, we can check that $c_0>0$, $c_1>0$ for all $Q>1$. Therefore, $w$ has at most one zero on the half-line $r>0$. Consequently, $z''$ has at most one zero, so $z$ has at most one inflection point. Taking into account \eqref{z_annulus_bc}, $z$ cannot have a local extremum on $]R_0, R_1[$. Thus, $|z|\leq 1$ on $]R_0, R_1[$ and $Z$ is a valid calibration.  

\begin{figure}
	\centering
	\begin{subfigure}{0.45\textwidth}
		\includegraphics[width=\textwidth]{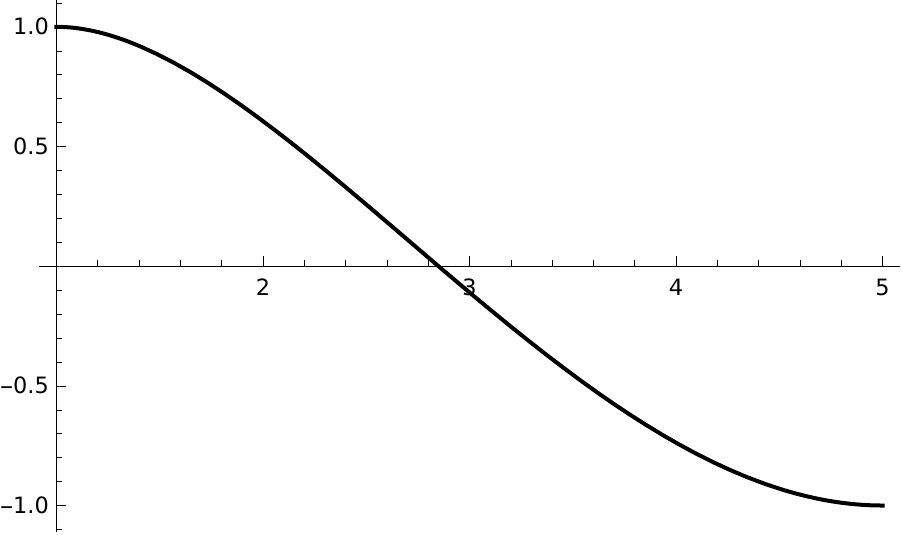}
		\caption{$R_0=1$, $R_1=5$. }
	\end{subfigure}
	\hfill
	\begin{subfigure}{0.45\textwidth}
		\includegraphics[width=\textwidth]{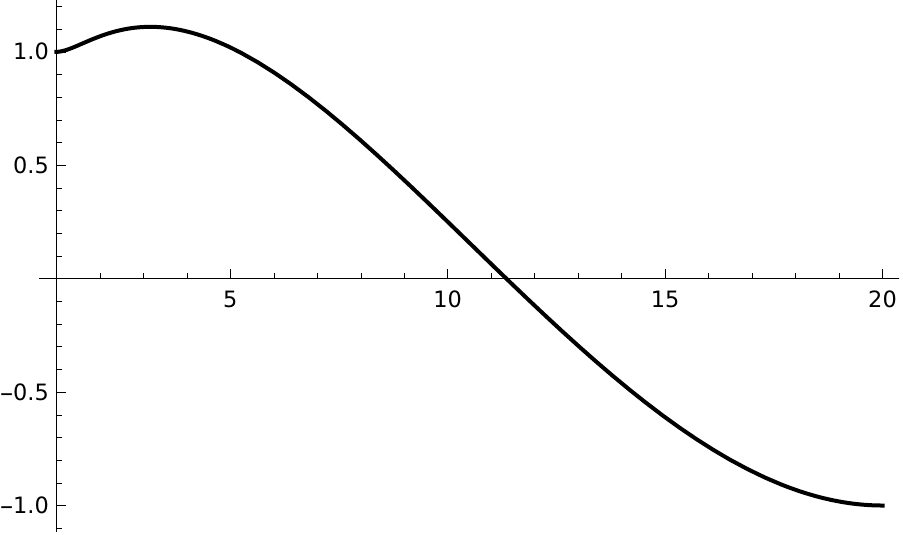}
		\caption{$R_0=1$, $R_1=20$.}
	\end{subfigure}
	
	\caption{Plots of $z$ for an annulus with constant signature for two different values of $Q$ in case $n=2$.}
	\label{plot_z_n2} 
\end{figure}

In the case $n=2$, the solution is 
\begin{align*} 
c_0 &= \tfrac{R_1^2 - R_0^2 + 2 R_1 R_0 \log(R_1/R_0)}{4 R_1 (R_1 - R_0) R_0 (-R_1^2 + R_0^2 + (R_1^2 + R_0^2) \log(R_1/R_0))} \\
c_1 &= \tfrac{-R_1^3 - 3 R_1^2 R_0 - 3 R_1 R_0^2 - R_0^3}{2 R_1 R_0 (-R_1^2 + R_0^2 + (R_1^2 + R_0^2) \log(R_1/R_0))} \\
c_2 &= \tfrac{-3 R_1^4 + 3 R_0^4 + 2 (R_1^4 - R_1^3 R_0 + R_1^2 R_0^2 - 3 R_1 R_0^3) \log(R_1) + 2(3 R_1^3 R_0 - R_1^2 R_0^2 + R_1 R_0^3 - R_0^4) \log(R_0)}{4 R_1 (R_1 - R_0) R_0 (-R_1^2 + R_0^2 + (R_1^2 + R_0^2) \log(R_1/R_0))} \\
c_3 &= \tfrac{R_1 (-3 R_1^2 R_0 + 3 R_0^3 + 2 (R_1^2 R_0 - R_1 R_0^2 + R_0^3) \log(R_1/R_0) )}{4 (R_1 - R_0) (-R_1^2 + R_0^2 + (R_1^2 + R_0^2) \log(R_1/R_0))}
\end{align*} 
which can be rewritten (again, denoting $Q = R_1/R_0$) as 
\begin{align} \begin{split} 
		c_0 &= \tfrac{Q^2\left(Q^2 - 1 + 2 Q \log Q\right)}{4 (Q-1)\left(-Q^2 + 1 +(Q^2 + 1) \log Q\right)} R_1^{-3}, \\
		c_1 &= \tfrac{-(Q+1)^3}{2 \left(-Q^2 + 1 + (Q^2 + 1) \log Q\right)} R_1^{-1}, \\
		c_2 &= \tfrac{-3(Q^4 - 1) - 2(3 Q^3 - Q^2 + Q - 1)\log Q}{4(Q-1)\left(-Q^2 + 1 +(Q^2 +1) \log Q\right)}R_1^{-1} + \tfrac{(Q+1)^3}{2 \left(-Q^2 + 1 + (Q^2 + 1) \log Q\right)}R_1^{-1} \log R_1, \\
		c_3 &= \tfrac{-3 Q^2 + 3 + 2(Q^2 - Q + 1) \log Q}{4 (Q-1) \left( -Q^2 + 1 +(Q^2 + 1) \log Q \right)}R_1.
\end{split} \end{align} 
As before, we calculate the second derivative of $z$: 
\begin{equation*} 
	z''(r) = 6 c_0 r + c_1 r^{-1} +2 c_3 r^{-3} = r^{-3} (6 c_0 r^4 + c_1 r^2 + 2 c_3) =: r^{-3} w(r). 
\end{equation*} 
The polynomial $w$ has at most $2$ positive roots, and so does $z''$. By \eqref{z_annulus_bc}, at least one of them belongs to $]R_0, R_1[$. Furthermore, since $c_0 > 0$ for $Q > 1$, $z''(r)$ is positive for large values of $r$. Taking into account these observations, we deduce that $|z|\leq 1$ on $[R_0, R_1]$ if and only if $z''(R_0) \leq 0$. This inequality is equivalent to 
\[m(Q):=\log Q - \frac{(Q^2 - 1)(2Q - 1)}{Q(Q^2 - 2Q + 3)} \leq 0. \]
We compute
\begin{equation} \label{m_comp} 
	m(1) = 0, \qquad \lim_{Q \to +\infty} m(Q) = +\infty, \qquad m'(Q) = \frac{(Q-3)(Q-1)(Q+1)^3}{Q^2(Q^2- 2Q + 3)^2}.
\end{equation}  
We observe that $m$ has exactly one zero $Q_*$ on $]1, +\infty[$, and $m(Q) \leq 0$ if and only if $Q \leq Q_*$. Therefore, $Z$ is a valid calibration for $A_{R_0}^{R_1}$ with signature $-1$  if and only if $R_1/R_0 \leq Q_*$. By \eqref{m_comp} it is evident that $Q_* > 3$. Numerical computation using Wolfram Mathematica shows that $Q_* \approx 9.7$. Thus, $A_{R_0}^{R_1}$ with constant signature is calibrable if and only if $R_1/R_0 \leq Q_*$. This concludes the proof of Theorem \ref{MAIN1}. 

Now, let us consider non-constant signature. We assume that $\chi=1$ on $\partial B_{R_0}$ and $\chi = -1$ on $\partial B_{R_1}$. This choice leads to 
\begin{equation} \label{z_annulus_bc2}  
	z(R_0) = -1, \quad z(R_1) = -1, \quad z'(R_0) = z'(R_1) = 0. 
\end{equation} 
If $n\neq 2$, the solution to the resulting affine system is 
\begin{align} \begin{split} \label{annulus_soln_ncs}
		c_0 &= \tfrac{-2 R_1^3 R_0^{2 n}-2 R_0^3 R_1^{2 n}+R_0^n R_1^n (R_0+R_1) \left((n-2) (n-1) R_0^2-2 ((n-3) n+1) R_0 R_1+(n-2) (n-1) R_1^2\right)}{R_0 R_1 \left(R_0^n R_1^n \left(n^2 \left(R_0^2-R_1^2\right)^2+8 R_0^2 R_1^2\right)-4 R_1^2 R_0^{2 n+2}-4 R_0^2 R_1^{2 n+2}\right)} \\
		c_1 &=-\tfrac{-3 n R_1 R_0^{n+2}+2 (n-1) R_0^{n+3}+(n+2) R_1^3 R_0^n+(n+2) R_0^3 R_1^n-3 n R_0 R_1^{n+2}+2 (n-1) R_1^{n+3}}{4 R_0^{3-n} R_1^{3-n} \left(R_0^n-R_1^n\right)^2-n^2 R_0 R_1 \left(R_0^2-R_1^2\right)^2} \\
		c_2 &= \tfrac{6 R_1^3 R_0^{2 n+2}+6 R_0^3 R_1^{2 n+2}-R_0^n R_1^n (R_0+R_1) \left((n-1) n R_0^4-(n-1) n R_0^3 R_1-(n-1) n R_0 R_1^3+(n-1) n R_1^4+6 R_0^2 R_1^2\right)}{R_0 R_1 \left(R_0^n R_1^n \left(n^2 \left(R_0^2-R_1^2\right)^2+8 R_0^2 R_1^2\right)-4 R_1^2 R_0^{2 n+2}-4 R_0^2 R_1^{2 n+2}\right)} \\
		c_3 &= \tfrac{(R_0-R_1) \left(R_0^2 R_1^n \left(n (R_0-R_1) (R_0+2 R_1)+6 R_1^2\right)-R_1^2 R_0^n \left(-2 (n-3) R_0^2+n R_0 R_1+n R_1^2\right)\right)}{4 R_0^{3-n} R_1^{3-n} \left(R_0^n-R_1^n\right)^2-n^2 R_0 R_1 \left(R_0^2-R_1^2\right)^2}
\end{split} \end{align} 
which we rewrite as 
\begin{align} \begin{split} \label{annulus_soln_ncs_homo} 
		c_0 &= \tfrac{-2 Q^3-2 Q^{2 n}+ Q^n (1+Q) \left((n-2) (n-1) -2 ((n-3) n+1) Q+(n-2) (n-1) Q^2\right)}{Q^{n-2} \left(n^2 \left(1-Q^2\right)^2+8 Q^2\right)-4 -4 Q^{2 n}} R_1^{-3} \\
		c_1 &=-\tfrac{-3 n Q+2 (n-1) +(n+2) Q^3 +(n+2) Q^n-3 n Q^{n+2}+2 (n-1) Q^{n+3}}{4  \left(1-Q^n\right)^2-n^2 Q^{n-2} \left(1-Q^2\right)^2} R_1^{n-3}\\
		c_2 &= \tfrac{6 Q^3 +6 Q^{2 n+2}-Q^n (1+Q) \left((n-1) n -(n-1) n Q-(n-1) n Q^3+(n-1) n Q^4+6 Q^2\right)}{ Q^n \left(n^2 \left(1-Q^2\right)^2+8 Q^2\right)-4 Q^2 -4 Q^{2 n+2}} R_1^{-1}\\
		c_3 &= \tfrac{(1-Q) \left( Q^n \left(n (1-Q) (1+2 Q)+6 Q^2\right)-Q^2 \left(-2 (n-3) +n Q+n Q^2\right)\right)}{4 Q^2 \left(1-Q^n\right)^2-n^2 Q^n \left(1-Q^2\right)^2} R_1^{n-1}
\end{split} \end{align}
We note that in case $n=1$ the solution reduces to 
\begin{equation*} 
	c_0 = 0, \quad c_1 = 0, \quad c_2 = 0, \quad c_3 =-1,
\end{equation*} 
while in case $n=3$ it reduces to 
\begin{equation*} 
	c_0 = 0, \quad c_1 = -1, \quad c_2 = 0, \quad c_3 =0.
\end{equation*} 
In both of these cases $z$ is constant and we have $\lambda = c_0 = 0$. On the other hand, if $n \geq 4$, we can check that $c_0 > 0$ for $Q >1$. Recalling \eqref{z''}, we observe that $z''$ has at most two zeros on the positive half-line and $z''(r)>0$ for large values of $r$. On the other hand, by \eqref{z_annulus_bc2}, if $z$ has $N$ local extrema on $]R_0,R_1[$, it needs to have at least $N+1$ inflection points. We deduce from these conditions that $z$ has exactly one local maximum and no local minima, and therefore $z \geq -1$ on $]R_0, R_1[$. It remains to check whether $z\leq 1$ on $]R_0, R_1[$. Let now 
\[f(r) = r^{1-n}(r^{n-1}z(r))' = f'(r) + (n-1)\frac{f(r)}{r}.\]
Then, by \eqref{ode_z}, \eqref{z_annulus_bc2}, $f$ is a solution to the second-order elliptic problem
\[\mathcal A f =  \lambda, \quad f(R_0)=-\frac{n-1}{R_0}, \quad f(R_1)=-\frac{n-1}{R_1},\]
where 
\[\mathcal A f = - r^{1-n}(r^{n-1}f'(r))' = - f''(r) - (n-1)\frac{f'(r)}{r}.\]
Since $c_0 \geq 0$, we have $\lambda < 0$ for $Q >1$.  By the classical weak maximum principle \cite[Theorem 3.1.]{GT}, 
\[\max_{[R_0, R_1]}f = \max_{\{R_0, R_1\}}f = -\frac{n-1}{R_1}.\]
Now, if $z$ has a local maximum at $r_0$, then $z'(r_0)=0$, so $f(r_0) = \frac{z(r_0)}{r_0}$. Consequently, 
 \[\frac{z(r_0)}{r_0} \leq -\frac{n-1}{R_1} < 0,\]
 so $z<0$ on $[R_0, R_1]$. Thus, if $n\neq 2$, all annuli with non-constant signature are calibrable. 

\begin{figure}
	\centering
	\begin{subfigure}{0.45\textwidth}
		\includegraphics[width=\textwidth]{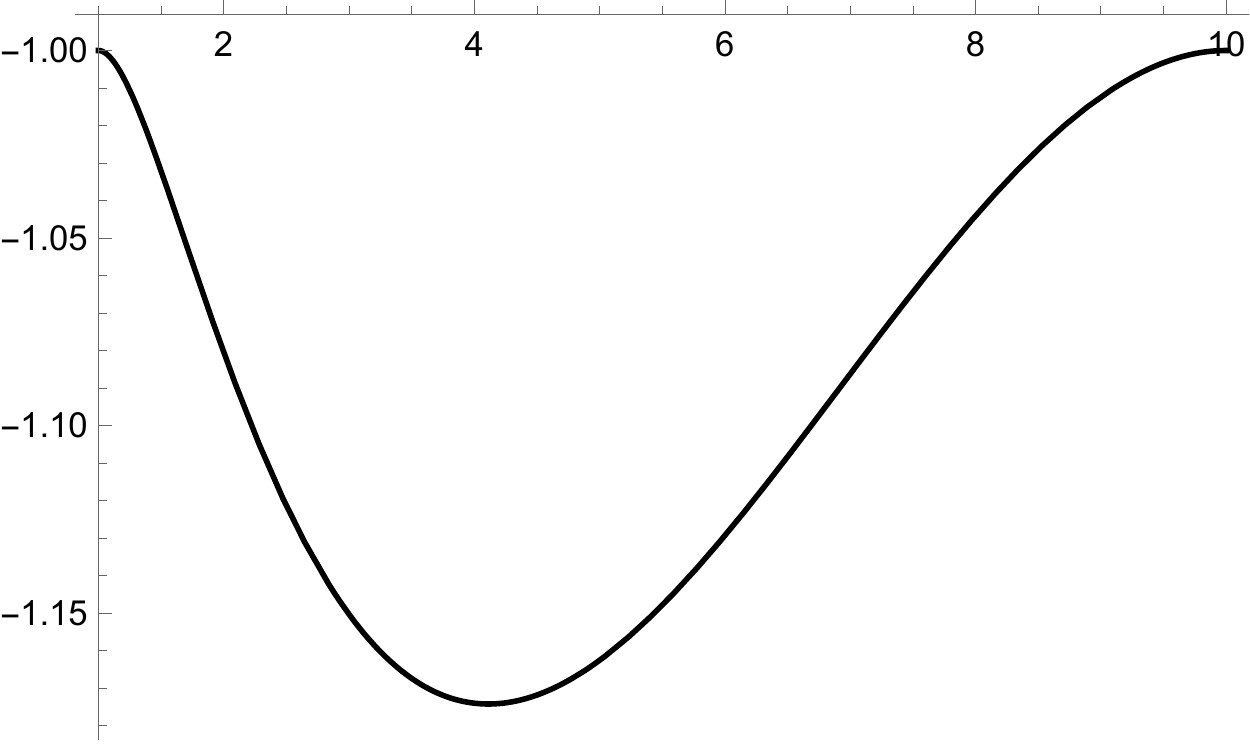}
		\caption{$n=2$. }
	\end{subfigure}
	\hfill
	\begin{subfigure}{0.45\textwidth}
		\includegraphics[width=\textwidth]{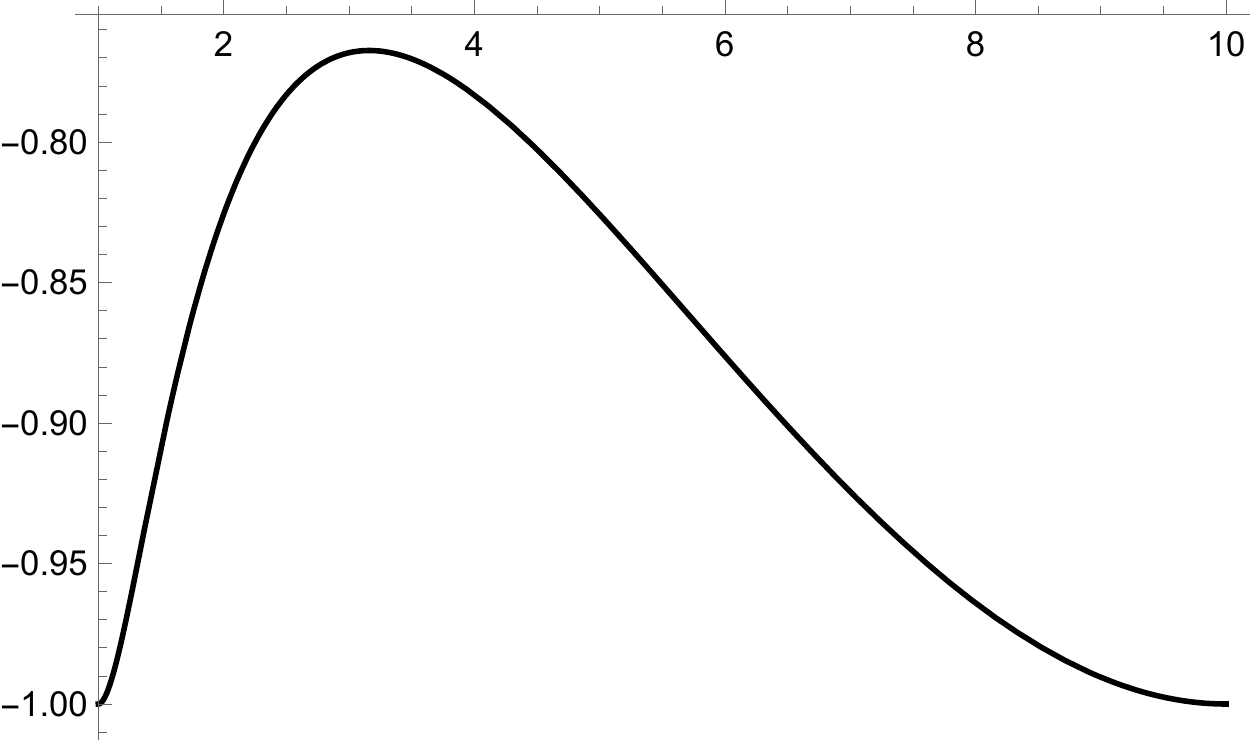}
		\caption{$n=4$.}
	\end{subfigure}
	
	\caption{Plots of $z$ for an annulus with non-constant signature for two values of $n$, with $R_0 = 1$, $R_1 = 10$.}
	\label{plot_z_ncs} 
\end{figure}

We move to the case $n=2$. Now, the solution to the affine system for coefficients of $z$ is 
\begin{align*}\begin{split} 
		c_0 &= \tfrac{-R_0^2 - 2 R_0 R_1 \log (R_1/R_0)+R_1^2}{4 R_0 R_1 (R_0+R_1) \left(-R_0^2+R_1^2- \left(R_0^2+R_1^2\right)\log (R_1/R_0)\right)}, \\
		c_1 &= \tfrac{(R_0-R_1)^3}{2 R_0 R_1 \left(-R_0^2+R_1^2- \left(R_0^2+R_1^2\right)\log (R_1/R_0)\right)},\\ 
		c_2 &= \tfrac{(R_0-R_1) (R_0+R_1) \left(R_0^2+4 R_0 R_1+R_1^2\right)-2 R_0 \left(R_0^3+R_0^2 R_1+R_0 R_1^2+3 R_1^3\right) \log (R_0)+2 R_1 \left(3 R_0^3+R_0^2 R_1+R_0 R_1^2+R_1^3\right) \log (R_1)}{4 R_0 R_1 (R_0+R_1) \left(-R_0^2+R_1^2- \left(R_0^2+R_1^2\right)\log (R_1/R_0)\right)}, \\
		c_3 &= \tfrac{R_0 R_1 \left(2 \left(R_0^2+R_0 R_1+R_1^2\right) \log (R_1/R_0))+3 (R_0-R_1) (R_0+R_1)\right)}{4 (R_0+R_1) \left(-R_0^2+R_1^2- \left(R_0^2+R_1^2\right)\log (R_1/R_0)\right)}
\end{split} \end{align*} 
or equivalently 
\begin{align*}\begin{split} 
		c_0 &= \tfrac{Q^2\left(-1 - 2 Q \log Q+Q^2\right)}{4 (1+Q) \left(-1 + Q^2- \left(1+Q^2\right)\log Q \right)}R_1^{-3}, \\
		c_1 &= \tfrac{(1-Q)^3}{2 \left(-1 + Q^2- \left(1 + Q^2\right)\log Q\right)}R_1^{-1},\\ 
		c_2 &= \tfrac{(1-Q) (1+Q) \left(1 + 4 Q + Q^2\right) + 2 \left(1 + Q + Q^2 + 3 Q^3\right) \log Q}{4 (1 + Q) \left(-1 + Q^2 - \left(1 + Q^2\right)\log Q \right)}R_1^{-1} + \tfrac{ (Q-1)^3}{2 \left(-1 + Q^2 - \left(1 + Q^2\right)\log Q \right)}R_1^{-1}\log R_1, \\
		c_3 &= \tfrac{2 \left(1 + Q + Q^2\right) \log Q +3 (1 - Q)(1 + Q)}{4(1 + Q) \left(-1 + Q^2 - \left(1 + Q^2\right)\log Q\right)}R_1.
\end{split} \end{align*} 
We can check that in this case $c_0 < 0$ for $Q>1$. By the same argument as in the previous case, we show that $z < -1$ in $]R_0, R_1[$, so it does not define a valid calibration. Thus, in the case $n=2$ all annuli with non-constant signature are not calibrable.   
\section{Explicit solutions} \label{SDS}
\subsection{Balls} 
In this section, our goal is to provide explicit description of solutions to \eqref{E1} emanating from the characteristic function of a ball 
\begin{equation}\label{char_ball} 
	u_0 = a_0 \mathbf 1_{B_{R_0}}.
\end{equation} 
In the case of second-order total variation flow, the solutions with initial datum \eqref{char_ball} are known to be of form
\[u(t) = a(t)\mathbf 1_{B_{R_0}} \]
with finite extinction time, i.\,e.\ there exists $t_*>0$ such that $a(t) = 0$ for $t \geq t_*$. In the fourth order case, based on the treatment of case $n=1$ in \cite{GG}, we would expect the solutions to have the form
\begin{equation} \label{soln_form}
	u(t) = a(t)\mathbf 1_{B_{R(t)}},
\end{equation}  
at least until an extinction time beyond which $u(t,\cdot) \equiv 0$. This intuition turns out to be correct in every dimension except $n = 2$. 

Let first $n \geq 3$.
 As we have checked in section \ref{examples}, in this case both balls and complements of balls are calibrable. Thus, as long as the solution is of form \eqref{soln_form} in time instance $t \geq 0$, we expect a valid Cahn-Hoffman vector field $Z$ to be given by 
\begin{equation} \label{Zin_Zout} 
	Z(x) = \left\{\begin{array}{l} 
		Z_{in}(x) \text{ if } |x| \in [0,R[ \\
		Z_{out}(x) \text{ if } |x| > R,
	\end{array}  \right.
\end{equation} 
where $Z_{in}$ is the calibration w constructed for a ball $B_R$ and $Z_{out}$ is the calibration we constructed for the complement of that ball, recall:  
\begin{equation} \label{lambda_soln}
	\lambda = - \frac{n(n+2)}{R^3},  
\end{equation} 
\begin{equation} \label{ode_z_soln} 
	Z_{in}(x) =  \frac{1}{2} \left(\frac{|x|}{R}\right)^3 \frac{x}{|x|}  -\frac{3}{2} \frac{x}{R}, \qquad Z_{out}(x) = - \frac{n-1}{2} \left(\frac{|x|}{R}\right)^{3-n}\frac{x}{|x|} + \frac{n-3}{2} \left(\frac{|x|}{R}\right)^{1-n}\frac{x}{|x|}. 
\end{equation}
We further calculate: 
\begin{equation*}   
	\operatorname{div} Z_{in}(x) = \frac{n+2}{2} \frac{|x|^2}{R^3} - \frac{3n}{2} \frac{1}{R}, \qquad \operatorname{div} Z_{out}(x) = - (n-1) \frac{|x|^{2-n}}{R^{3-n}},
\end{equation*} 
\begin{equation*} 
	\nabla \operatorname{div} Z_{in}(x) = (n+2) \frac{x}{R^3}, \qquad \nabla \operatorname{div} Z_{out}(x) = (n-1)(n-2) \frac{|x|^{-n} x}{R^{3-n}}.
\end{equation*}
It is straightforward to check that $\operatorname{div} Z \in D^1(\mathbb{R}^n) \cap L^{2^*}(\mathbb{R}^n) = D_0^1(\mathbb{R}^n)$. Next, we deduce 
\begin{multline} 
	u_t = -\Delta \operatorname{div} Z \\ = - \Delta \operatorname{div} Z_{in}\, \mathcal{L}^n \,\raisebox{-.127ex}{\reflectbox{\rotatebox[origin=br]{-90}{$\lnot$}}}\,_{B_R} - \Delta \operatorname{div} Z_{out}\, \mathcal{L}^n \,\raisebox{-.127ex}{\reflectbox{\rotatebox[origin=br]{-90}{$\lnot$}}}\,_{\mathbb{R}^n\setminus B_R} + \frac{x}{|x|} \cdot(\nabla \operatorname{div} Z_{in} - \nabla \operatorname{div} Z_{out})\,\mathcal{H}^{n-1} \,\raisebox{-.127ex}{\reflectbox{\rotatebox[origin=br]{-90}{$\lnot$}}}\,_{\partial B_R} \\ = - \frac{n(n+2)}{R^3}\, \mathcal{L}^n \,\raisebox{-.127ex}{\reflectbox{\rotatebox[origin=br]{-90}{$\lnot$}}}\,_{B_R} - \frac{n(n-4)}{R^2}\, \mathcal{H}^{n-1} \,\raisebox{-.127ex}{\reflectbox{\rotatebox[origin=br]{-90}{$\lnot$}}}\,_{\partial B_R}.   
\end{multline}    
Then, using the identity $\frac{\mathrm{d}}{\mathrm{d} t} \int_{\mathbb{R}^n} u = \int_{\mathbb{R}^n} u_t$, we obtain (recall notation \eqref{soln_form})
\begin{equation*} 
	a(t) \mathcal{H}^{n-1}\left(\partial B_{R(t)}\right) \frac{\mathrm{d} R}{\mathrm{d} t} = a(t) \frac{\mathrm{d}}{\mathrm{d} t} \mathcal{L}^n\left(B_{R(t)}\right) = - \frac{n(n-4)}{R^2} \mathcal{H}^{n-1}\left(\partial B_{R(t)}\right).
\end{equation*} 
Summing up, evolution of initial datum \eqref{char_ball} is given by \eqref{soln_form} with $a$, $R$ satisfying 
\begin{equation}\label{ode_aR} 
	\frac{\mathrm{d} a}{\mathrm{d} t} = - \frac{n(n+2)}{R^3}, \qquad \frac{\mathrm{d} R}{\mathrm{d} t} = - \frac{n(n-4)}{R^2 a}.
\end{equation} 
This system can be explicitly solved by noticing that 
\begin{equation*} 
	\frac{\mathrm{d}}{\mathrm{d} t} (a R^3) = -n(n+2) - 3n(n-4) = -n (4n - 10)
\end{equation*} 
and therefore 
\begin{equation*} 
	a R^3 = a_0 R_0^3 - n (4n - 10) t
\end{equation*} 
along trajectories. The solution is 
\begin{equation} \label{final_soln} 
	a(t) = a_0 \left(1 - \frac{n(4n-10)}{a_0 R_0^3} t\right)^{\frac{n+2}{4n-10}}, \qquad R(t) = R_0 \left(1 - \frac{n(4n-10)}{a_0 R_0^3} t\right)^{\frac{n-4}{4n-10}}. 
\end{equation}
We note that the solution satisfies
\[\left(\frac{a}{a_0}\right)^{n-4} = \left(\frac{R}{R_0}\right)^{n+2}\]
along trajectories. (This "first integral" could also have been used to solve the system \eqref{ode_aR}.) Let us point out a few observations concerning the solutions (compare Figure \ref{plot_ball}): 
\begin{itemize} 
	\item the extinction time is equal to $t_* = \frac{a_0 R_0^3}{n(4n-10)}$, 
	\item if $n=3$, $R(t)$ is increasing and $R(t) \to + \infty$ as $t \to t_*^-$, 
	\item if $n=4$, $R(t) = R_0$ is constant, 
	\item in higher dimensions, $R(t)$ is decreasing and $R(t) \to 0$ as $t \to t_*^-$.
\end{itemize} 

\begin{figure}
	\centering
	\begin{subfigure}{0.45\textwidth}
		\includegraphics[width=\textwidth]{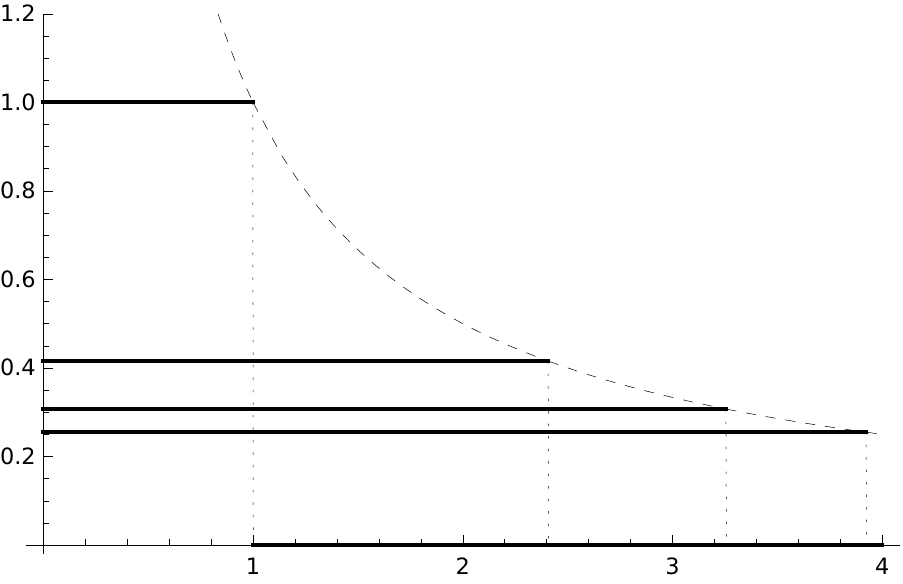}
		\caption{Case $n=1$. Solid lines: plots of $u(t,\cdot)$ for $t=0$, $t=0.8$, $t=1.6$, $t=2.4$. }
	\end{subfigure}
	\hfill
	\begin{subfigure}{0.45\textwidth}
		\includegraphics[width=\textwidth]{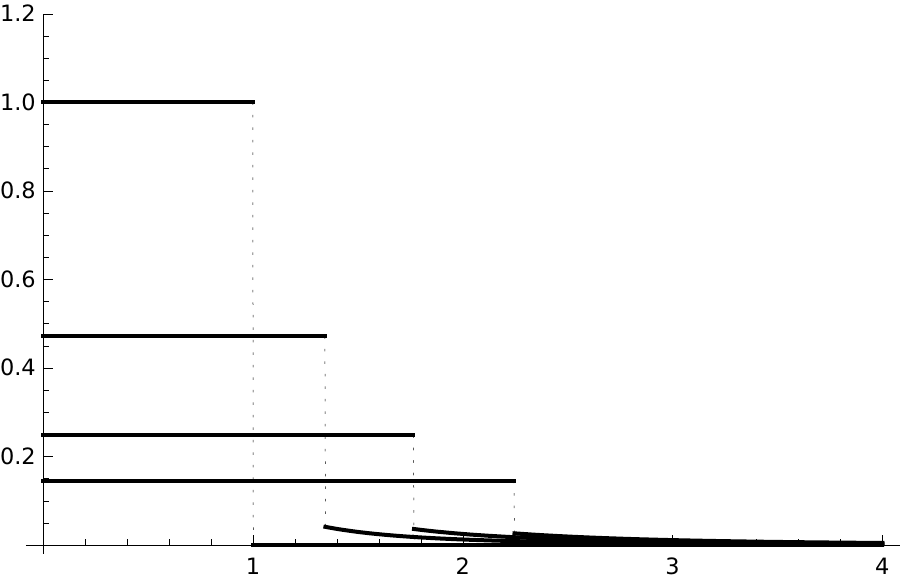}
		\caption{Case $n=2$. Solid lines: plots of $u(t,\cdot)$ for $t=0$, $t=0.1$, $t=0.2$, $t=0.3$.}
	\end{subfigure}
	
	\vspace{25pt}
	
	\begin{subfigure}{0.45\textwidth}
		\includegraphics[width=\textwidth]{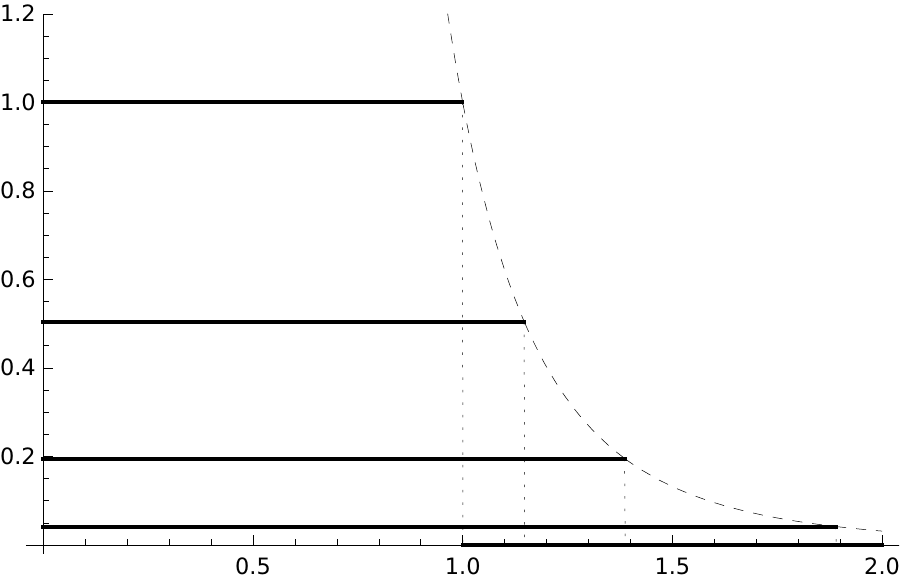}
		\caption{Case $n=3$. Solid lines: plots of $u(t,\cdot)$ for $t=0$, $t=0.04$, $t=0.08$, $t=0.12$. }
	\end{subfigure}
	\hfill
	\begin{subfigure}{0.45\textwidth}
		\includegraphics[width=\textwidth]{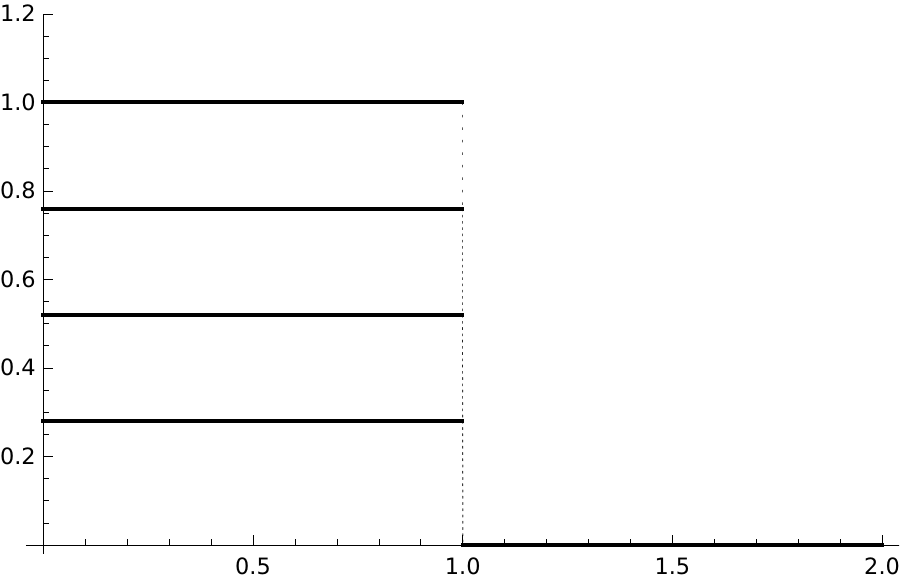}
		\caption{Case $n=4$. Solid lines: plots of $u(t,\cdot)$ for $t=0$, $t=0.01$, $t=0.02$, $t=0.03$.}
	\end{subfigure}
	
	\vspace{25pt}
	
	\begin{subfigure}{0.45\textwidth}
		\includegraphics[width=\textwidth]{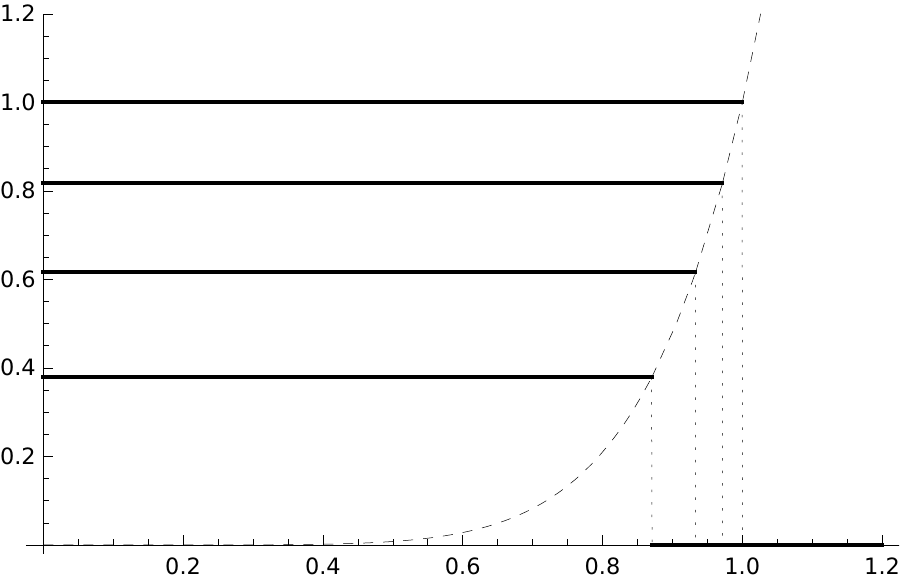}
		\caption{Case $n=5$. Solid lines: plots of $u(t,\cdot)$ for $t=0$, $t=0.005$, $t=0.01$, $t=0.015$. }
	\end{subfigure}
	\hfill
	\begin{subfigure}{0.45\textwidth}
		\includegraphics[width=\textwidth]{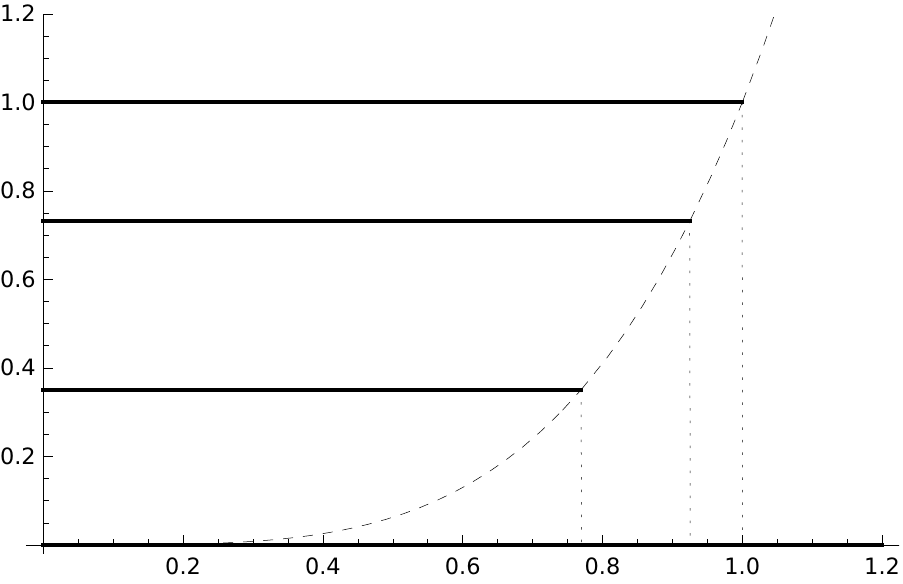}
		\caption{Case $n=6$. Solid lines: plots of $u(t,\cdot)$ for $t=0$, $t=0.005$, $t=0.01$, $t=0.015$.}
	\end{subfigure}
	
	\caption{Plots of the solution $u(t,x)$ emanating from the characteristic function of the unit ball as a function of $|x|$ for chosen values of $t$.}
	\label{plot_ball}
\end{figure}

In the case $n=2$ we were able to exhibit a calibration for the ball $B_R$, but not for its complement. Another possible ansatz on the Cahn-Hoffman vector field of form \eqref{Zin_Zout} is one where $Z_{in}$ is the calibration we constructed for $B_R$ and $Z_{out}$ is the choice
considered in \cite{GKM}: 
\begin{equation} \label{ode_z_soln_2}
	Z_{in}(x) =  \frac{1}{2} \left(\frac{|x|}{R}\right)^3 \frac{x}{|x|}  -\frac{3}{2} \frac{x}{R}, \qquad Z_{out}(x) = - \frac{x}{|x|}. 
\end{equation}
We calculate 
\begin{equation} \label{Zout2} 
	\operatorname{div} Z_{out} = - \frac{(n-1)}{|x|}, \qquad \nabla \operatorname{div} Z_{out} = \frac{(n-1)x}{|x|^3},
\end{equation}
hence 
\begin{equation} \label{ut_out} 
	u_t(t,x) = - \frac{(n-1)(n-3)}{|x|^3} \quad \text{in } \mathcal{D}'\left(\mathbb{R}^n \setminus \overline{B}_{R(t)}\right). 
\end{equation} 
If $n \geq 4$, this would lead to $u(t)$ being radially strictly increasing for positive $t$ and large values of $|x|$, which would be at odds with our choice of $Z_{out}$. In fact, if $n \geq 4$, $\operatorname{div} Z \not \in D_0^1(\mathbb{R}^n)$ for any $Z$ of this form. However, in smaller dimensions this ansatz remains a viable option. If $n=3$, it leads to the same solution as before. On the other hand, if $n=2$, we obtain a solution which is not of form \eqref{soln_form}. Instead, we are led to assume 
\begin{equation}\label{soln_form2} 
	u(t,x) = a(t) \mathbf 1_{B_{R(t)}} + \frac{t}{|x|^3}\mathbf 1_{\mathbb{R}^2\setminus B_{R(t)}}. 
\end{equation}  
We have: 
\begin{equation*}   
	\operatorname{div} Z_{in}(x) = 2 \frac{|x|^2}{R^3} - 3 \frac{1}{R}, \qquad \operatorname{div} Z_{out}(x) = - \frac{1}{|x|},
\end{equation*} 
\begin{equation*} 
	\nabla \operatorname{div} Z_{in}(x) = 4 \frac{x}{R^3}, \qquad \nabla \operatorname{div} Z_{out}(x) = \frac{x}{|x|^3}, 
\end{equation*}
\begin{equation*} 
	u_t = -\Delta \operatorname{div} Z = - \frac{8}{R^3}\, \mathcal{L}^2 \,\raisebox{-.127ex}{\reflectbox{\rotatebox[origin=br]{-90}{$\lnot$}}}\,_{B_R} + \frac{1}{|x|^3}\, \mathcal{L}^2 \,\raisebox{-.127ex}{\reflectbox{\rotatebox[origin=br]{-90}{$\lnot$}}}\,_{\mathbb{R}^2 \setminus B_R} + \frac{3}{R^2}\, \mathcal{H}^1 \,\raisebox{-.127ex}{\reflectbox{\rotatebox[origin=br]{-90}{$\lnot$}}}\,_{\partial B_R} 
\end{equation*} 
and, recalling \eqref{soln_form2},  
\begin{equation*}
	\left(a(t) - \frac{t}{R(t)^3}\right) \mathcal{H}^1(\partial B_{R(t)}) \frac{\mathrm{d} R}{\mathrm{d} t} = \left(a(t) - \frac{t}{R(t)^3}\right) \frac{\mathrm{d} }{\mathrm{d} t} \mathcal{L}^2(B_{R(t)}) = \frac{3}{R(t)^2} \mathcal{H}^1(\partial B_{R(t)}).  
\end{equation*}  
Thus, we arrive at ODE system
\begin{equation} \label{ode_n2}
	\frac{\mathrm{d} a}{\mathrm{d} t} = -\frac{8}{R^3}, \qquad \frac{\mathrm{d} R}{\mathrm{d} t} = \frac{3R}{aR^3 - t}.
\end{equation} 
This system is not autonomous, but it can be integrated by noticing that along trajectories
\begin{equation*} 
	\frac{\mathrm{d}}{\mathrm{d} t} \left(aR^3 - t\right) = \frac{9aR^3}{aR^3 - t} - 8 - 1 = \frac{9t}{aR^3 - t} 
\end{equation*}   
and so
\begin{equation*} 
	aR^3 = \sqrt{a_0^2 R_0^6 + 9 t^2} + t. 
\end{equation*} 
This implies, first of all, that 
\begin{equation*} 
	a(t) > \frac{t}{R(t)^3}
\end{equation*} 
for all $t>0$ and the form of solution \eqref{soln_form2} is preserved as long as the solution does not vanish. Furthermore, we can rewrite the system \eqref{ode_n2} in decoupled form 
\begin{equation}\label{ode_n2_ex} 
	\frac{\mathrm{d}}{\mathrm{d} t} \log a = \frac{- 8}{\sqrt{a_0^2 R_0^6 + 9 t^2} + t}, \qquad \frac{\mathrm{d}}{\mathrm{d} t} \log R^3 = \frac{9}{\sqrt{a_0^2 R_0^6 + 9 t^2}}.
\end{equation}
These equations can be explicitly integrated: 
\begin{equation*} 
a(t) = a_0 \frac{a_0^4 R_0^{12} + 8 a_0^2 R_0^{12}t^2}{\left(\sqrt{a_0^2 R_0^6 + 9 t^2} + 3 t \right)^2 \left(a_0^2 R_0^6 + 6 t^2 + 2 t \sqrt{a_0^2 R_0^6 + 9 t^2}\right)},   
\end{equation*} 
\begin{equation*} 
	 R(t) = R_0 \sqrt{1 + 6t \frac{3 t + \sqrt{a_0^2 R_0^6 + 9 t^2}}{a_0^2 R_0^6}}.
\end{equation*} 
We observe that the solutions exist globally and 
\begin{equation*} 
	\lim_{t \to \infty} a(t) = 0, \qquad \lim_{t \to \infty} R(t) = \infty. 
\end{equation*} 
In particular $u$ stays in the form \eqref{soln_form2} for all $t>0$.  

Finally we consider $n=1$. In this case, both ans\" atze considered before lead to the same solution: 
\begin{equation} 
	Z_{in}(x) =  \frac{1}{2} \left(\frac{x}{R}\right)^3  -\frac{3}{2} \frac{x}{R}, \qquad Z_{out}(x) = - \mathrm{sgn}\, x 
\end{equation}
which coincides with \eqref{ode_z_soln}. Repeating the calculations following \eqref{ode_z_soln}, we obtain a solution of form \eqref{soln_form} satisfying \eqref{final_soln}, i.\,e.
\begin{equation} 
	u(t) = a(t)\mathbf 1_{B_{R(t)}}, \qquad a(t) = a_0 \left(1 + \frac{6}{a_0 R_0^3} t\right)^{-\frac{1}{2}}, \qquad R(t) = R_0 \left(1 + \frac{6}{a_0 R_0^3} t\right)^{\frac{1}{2}}. 
\end{equation}
Note that now, as opposed to the case $n \geq 3$, the coefficient multiplying $t$ is positive. Like in $n=2$, the extinction time is infinite and we have 
\begin{equation*} 
	\lim_{t \to \infty} a(t) = 0, \qquad \lim_{t \to \infty} R(t) = \infty.  
\end{equation*} 
This concludes the proof of Theorem \ref{MAIN2}. 

\subsection{Stacks} 

\begin{figure}
	\centering
	\begin{subfigure}{0.45\textwidth}
		\includegraphics[width=\textwidth]{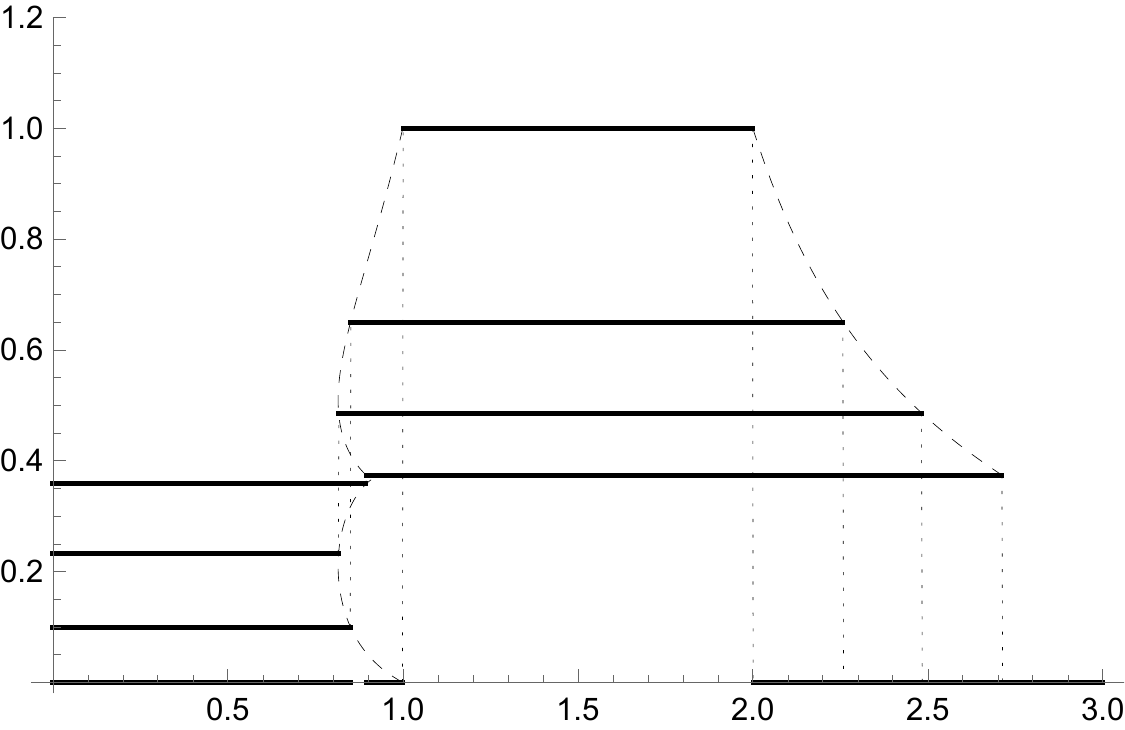}
		\caption{Case $n=1$. Solid lines: plots of $u(t,\cdot)$ for $t=0$, $t=0.025$, $t=0.5$, $t=0.075$. }
	\end{subfigure}
	\hfill
	\begin{subfigure}{0.45\textwidth}
		\includegraphics[width=\textwidth]{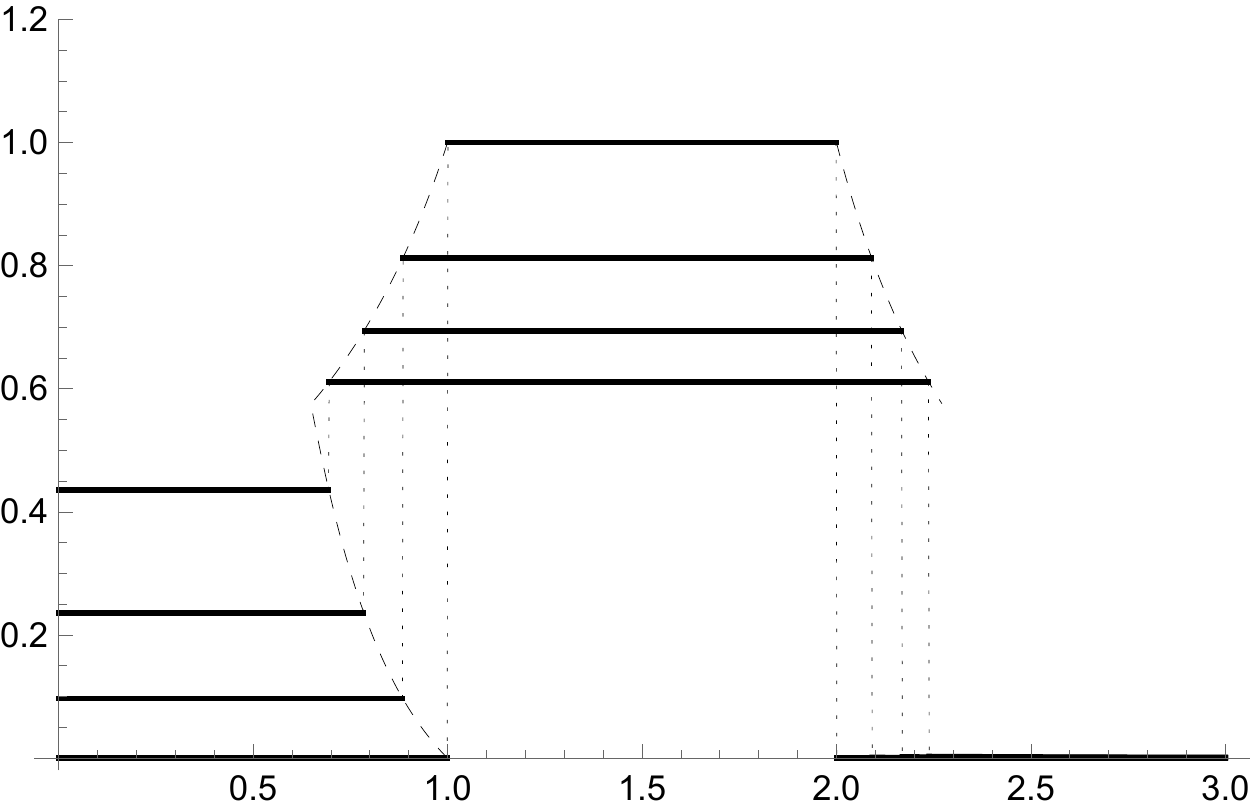}
		\caption{Case $n=2$. Solid lines: plots of $u(t,\cdot)$ for $t=0$, $t=0.01$, $t=0.02$, $t=0.03$.}
	\end{subfigure}
	
	\vspace{25pt}
	
	\begin{subfigure}{0.45\textwidth}
		\includegraphics[width=\textwidth]{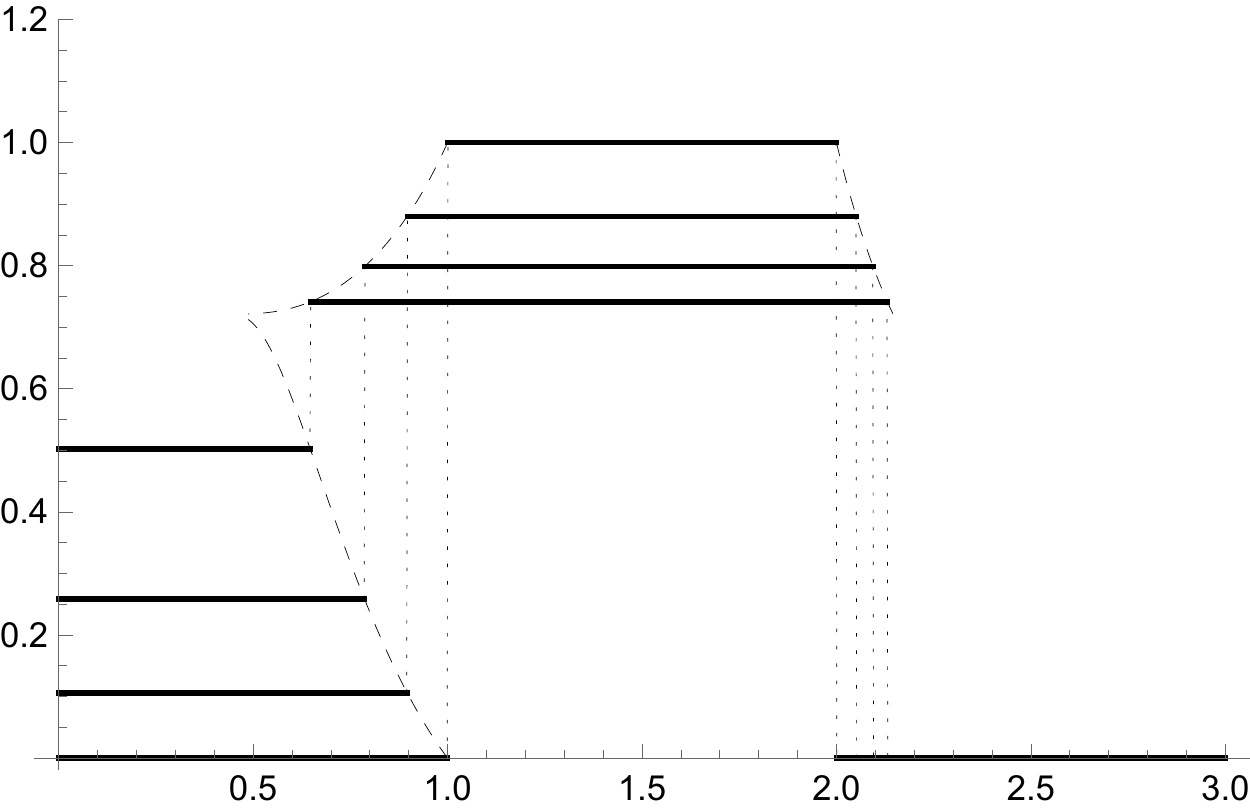}
		\caption{Case $n=3$. Solid lines: plots of $u(t,\cdot)$ for $t=0$, $t=0.006$, $t=0.012$, $t=0.018$. }
	\end{subfigure}
	\hfill
	\begin{subfigure}{0.45\textwidth}
		\includegraphics[width=\textwidth]{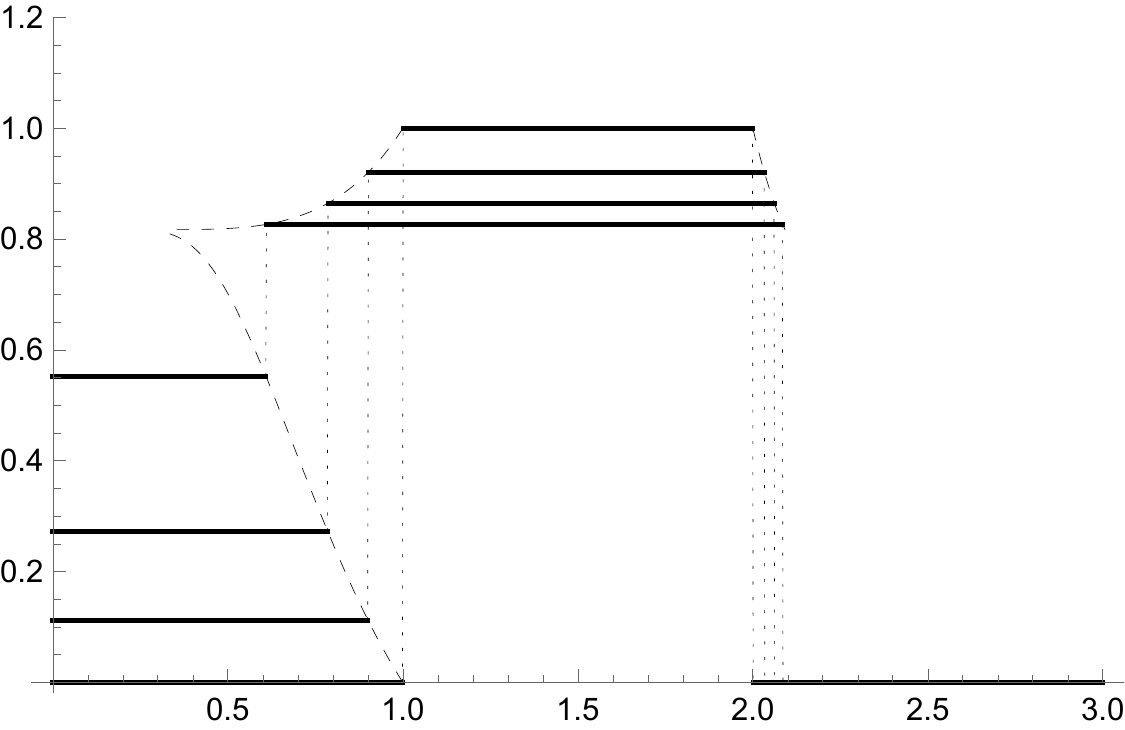}
		\caption{Case $n=4$. Solid lines: plots of $u(t,\cdot)$ for $t=0$, $t=0.004$, $t=0.008$, $t=0.012$.}
	\end{subfigure}
	
	\vspace{25pt}
	
	\begin{subfigure}{0.45\textwidth}
		\includegraphics[width=\textwidth]{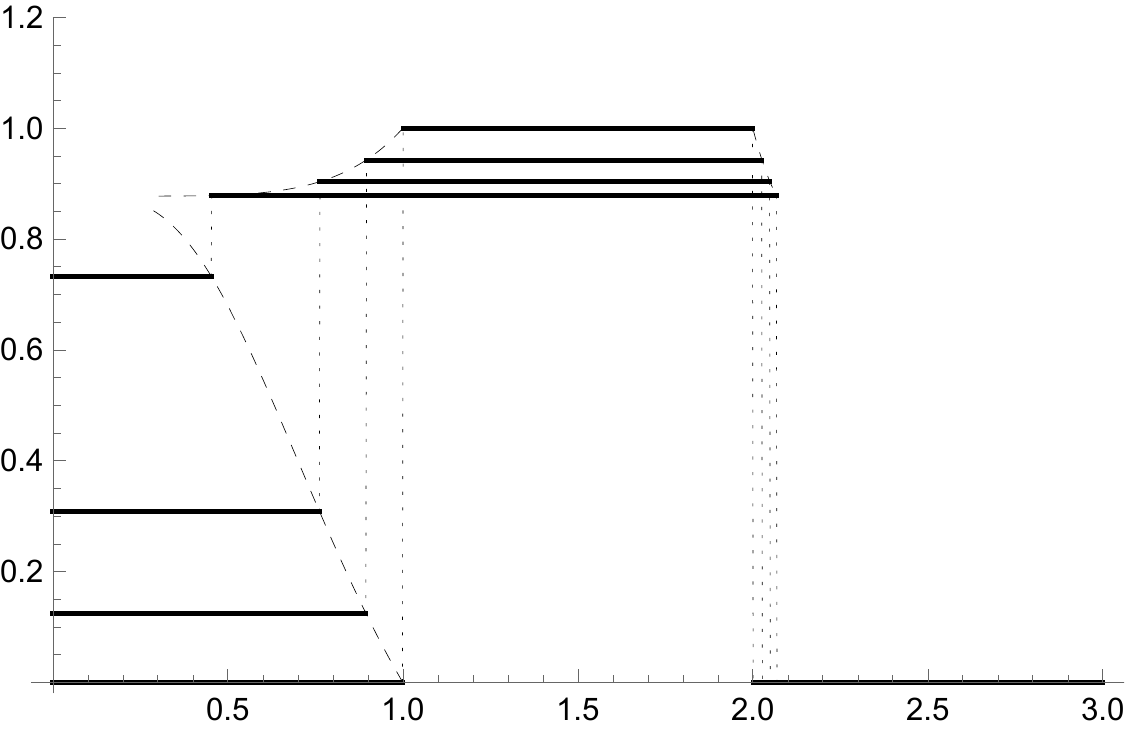}
		\caption{Case $n=5$. Solid lines: plots of $u(t,\cdot)$ for $t=0$, $t=0.003$, $t=0.006$, $t=0.009$. }
	\end{subfigure}
	\hfill
	\begin{subfigure}{0.45\textwidth}
		\includegraphics[width=\textwidth]{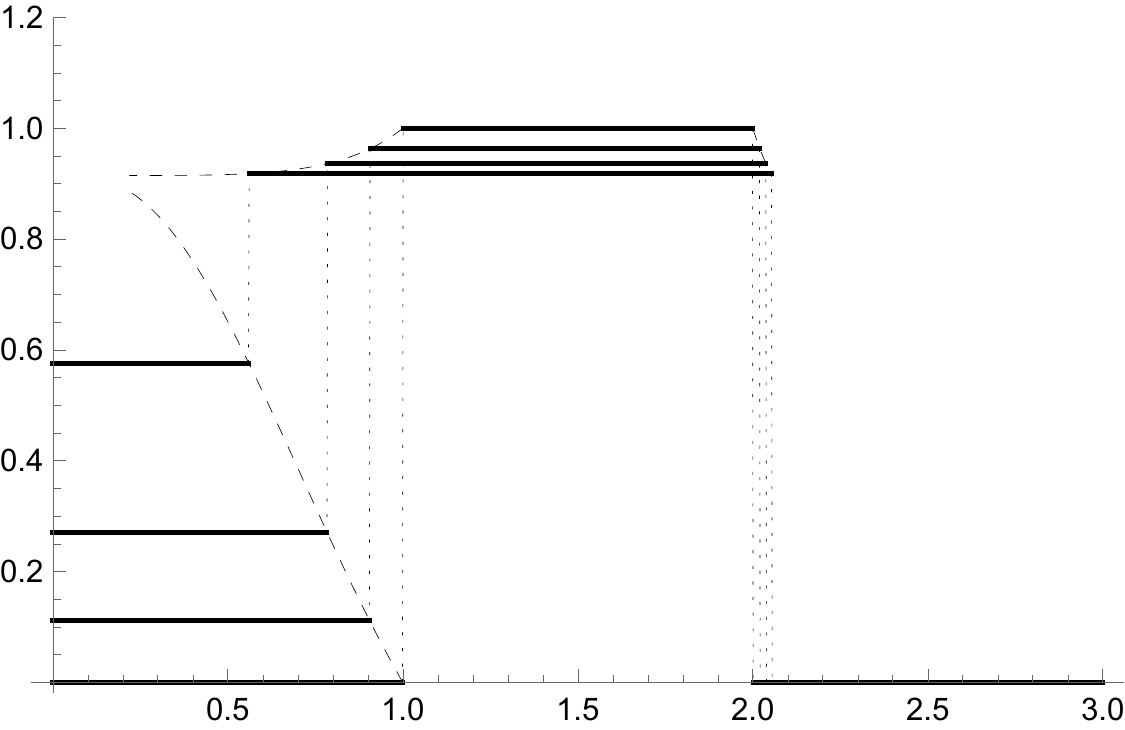}
		\caption{Case $n=6$. Solid lines: plots of $u(t,\cdot)$ for $t=0$, $t=0.002$, $t=0.004$, $t=0.006$.}
	\end{subfigure}
	
	\caption{Plots of the solution $u(t,x)$ emanating from the characteristic function of annulus $A_{R^1}^{R^2}$ as a function of $|x|$ for chosen values of $t$ with $R^1=1$, $R^2=2$.}
	\label{plot_annulus}
\end{figure}

Using the calibrations we constructed for generalized annuli, we will now derive a system of ODEs locally prescribing the solution emanating from any piecewise constant, radially symmetric datum (a \emph{stack}).
\begin{definition} 
	Let $w \in D(TV)$. We say that $w$ is a stack if there exists a number $N \in \mathbb{N}$ and sequences $0<R^0<R^1<\ldots<R^{N-1}$, $a^0, a^1, \ldots, a^N$ with $a^k \in \mathbb{R}$ such that 
	\[w = a^0 \mathbf{1}_{B_{R^0}} + a^1 \mathbf{1}_{A_{R^0}^{R^1}} +\ldots + a^{N-1} \mathbf{1}_{A_{R^{N-2}}^{R^{N-1}}} + a^N \mathbf{1}_{\mathbb{R}^n \setminus B_{R^{N-1}}}.\]
\end{definition}  
Suppose first that $n\neq 2$, in which case all connected components of level sets of any stack $w$ are calibrable. Let $u_0$ be a stack
\begin{equation} \label{datum_stack}
	u_0 = a^0_0 \mathbf{1}_{B_{R^0_0}} + a^1_0 \mathbf{1}_{A_{R^0_0}^{R^1_0}} +\ldots + a^{N-1}_0 \mathbf{1}_{A_{R^{N-2}_0}^{R^{N-1}_0}} + a^N_0 \mathbf{1}_{\mathbb{R}^n \setminus B_{R^{N-1}_0}}, 
\end{equation} 
where $a^{k-1}\neq a^k$ for $k=1,\ldots, N$, $a^N_0 = 0$. We expect that if $u$ is the solution emanating from $u_0$, then $u(t,\cdot)$ is a stack of form 
\begin{equation} \label{ut_stack}
	u(t,\cdot) = a^0(t) \mathbf{1}_{B_{R^0(t)}} + a^1(t) \mathbf{1}_{A_{R^0(t)}^{R^1(t)}} +\ldots + a^{N-1}(t) \mathbf{1}_{A_{R^{N-2}(t)}^{R^{N-1}(t)}} + a^N(t) \mathbf{1}_{\mathbb{R}^n \setminus B_{R^{N-1}(t)}}, 
\end{equation} 
with $a^N(t)=0$ for all $t>0$, and that $a^{k-1}\neq a^k$, $k=1,\ldots, N$ for small $t$. We construct a Cahn-Hoffman vector field $Z(t,\cdot)$ for $u(t,\cdot)$ by pasting together calibrations $Z^k$ for $B_{R^0(t)}$, $A_{R^k(t)}^{R^{k+1}(t)}$, $\mathbb{R}^n \setminus B_{R^{N-1}(t)}$ with suitable choice of signatures. We have 
\begin{multline} 
u_t = -\Delta \operatorname{div} Z = - \Delta \operatorname{div} Z^0\, \mathcal{L}^n \,\raisebox{-.127ex}{\reflectbox{\rotatebox[origin=br]{-90}{$\lnot$}}}\,_{B_{R^0}} - \sum_{k=1}^{n}  \Delta \operatorname{div} Z^k\, \mathcal{L}^n \,\raisebox{-.127ex}{\reflectbox{\rotatebox[origin=br]{-90}{$\lnot$}}}\,_{A_{R^{k-1}}^{R^k}} - \Delta \operatorname{div} Z^N\, \mathcal{L}^n \,\raisebox{-.127ex}{\reflectbox{\rotatebox[origin=br]{-90}{$\lnot$}}}\,_{\mathbb{R}^n\setminus B_{R^N}} \\ + \sum_{k=0}^{n} \frac{x}{|x|} \cdot(\nabla \operatorname{div} Z^k - \nabla \operatorname{div} Z^{k+1})\,\mathcal{H}^{n-1} \,\raisebox{-.127ex}{\reflectbox{\rotatebox[origin=br]{-90}{$\lnot$}}}\,_{S_{R^k}}.
\end{multline}
We denote 
\begin{multline*}\left. \frac{x}{|x|} \cdot(\nabla \operatorname{div} Z^k - \nabla \operatorname{div} Z^{k+1}) \right|_{S_{R^k}} \\= z^k_{rr}(R^k) - z^{k+1}_{rr}(R^k) + \frac{n-1}{R^k}(z^k_{r}(R^k)- z^{k+1}_{r}(R^k))-\frac{n-1}{(R^k)^2}(z^k(R^k)- z^{k+1}(R^k))\\= z^k_{rr}(R^k) - z^{k+1}_{rr}(R^k) =: d^k.
\end{multline*}
The values of $d^k$ are functions of $R^0, \ldots, R^{N-1}$.  Assuming that $R^k$ are regular enough and $\varepsilon$, $|t-s|$ are small enough, we have 
\[\frac{\mathrm{d}}{\mathrm{d} t}\int_{A_{R^k(s)-\varepsilon}^{R^k(s)+\varepsilon}} u = \int_{A_{R^k(s)-\varepsilon}^{R^k(s)+\varepsilon}} u_t,\]
whence 
\[(a^k(t) - a^{k-1}(t)) \mathcal{H}^{n-1}(S_{R^k(t)}) \frac{\mathrm{d} R^k}{\mathrm{d} t} = (a^k(t) - a^{k-1}(t)) \frac{\mathrm{d}}{\mathrm{d} t} \mathcal{L}^n(A_{R^k(s)-\varepsilon}^{R^k(t)}) = d^k(t) \mathcal{H}^{n-1}(S_{R^k(t)}).\]
Further, for $k=0,\ldots,N$, we denote by $\lambda^k$ the value of $-\Delta \operatorname{div} Z^k(t,\cdot)$ which is constant since $Z^k$ is a calibration. Then, we can write down the system of ODEs for $a^k$ and $R^k$:
\begin{equation} \label{stack_ode} 
	\frac{\mathrm{d} a^k}{\mathrm{d} t} = \lambda^k \text{ for } k=0,\ldots N, \qquad \frac{\mathrm{d} R^k}{\mathrm{d} t} = \frac{d^k}{a^k - a^{k+1}} \text{ for } k=0,\ldots N-1. 
\end{equation}
Let $c_0^k$ denote $c_0$ given by \eqref{annulus_soln_ncs} if $\mathrm{sgn}\,(a^{k+1} - a^k) = \mathrm{sgn}\, (a^k - a^{k-1})$ or by \eqref{annulus_soln} if $\mathrm{sgn}\,(a^{k+1} - a^k) \neq \mathrm{sgn}\, (a^k - a^{k-1})$, with $R^{k+1}$ and $R^k$ in place of $R_1$ and $R_0$. Then, we have
\begin{equation}\label{ak_ode} 
	\lambda^0 = \mathrm{sgn}\,(a^1-a^0)\frac{n(n+2)}{(R^0)^3}, \quad \lambda^k = 2n(n+2)\mathrm{sgn}\,(a^{k+1} - a^k) c_0^k \text{ for } k=1,\ldots,N-1, \quad \lambda^N = 0. 
\end{equation}
We observe that in a neighborhood of any initial datum $R^0_0, \ldots, R^{N-1}_0$,  $a^0_0, \ldots, a^N_0$, $R^k_0 < R^{k+1}_0$, $a^k_0 \neq a^{k+1}_0$, the r.\,h.\,s.\ of \eqref{stack_ode} is regular in $R^0, \ldots, R^{N-1}$,  $a^0, \ldots, a^N$, so locally the system has a unique solution. Unique solvability fails when a time instance $t>0$ is reached such that $a^k(t) = a^{k+1}(t)$, $R^k(t) = R^{k+1}(t)$ or $R_0 = 0$. In such case $u(t,\cdot)$ is again a stack with a smaller $N$, and we can restart our procedure. This concludes the proof of Theorem \ref{MAIN3}.

\begin{figure}
	\begin{subfigure}{0.45\textwidth}
		\includegraphics[width=\textwidth]{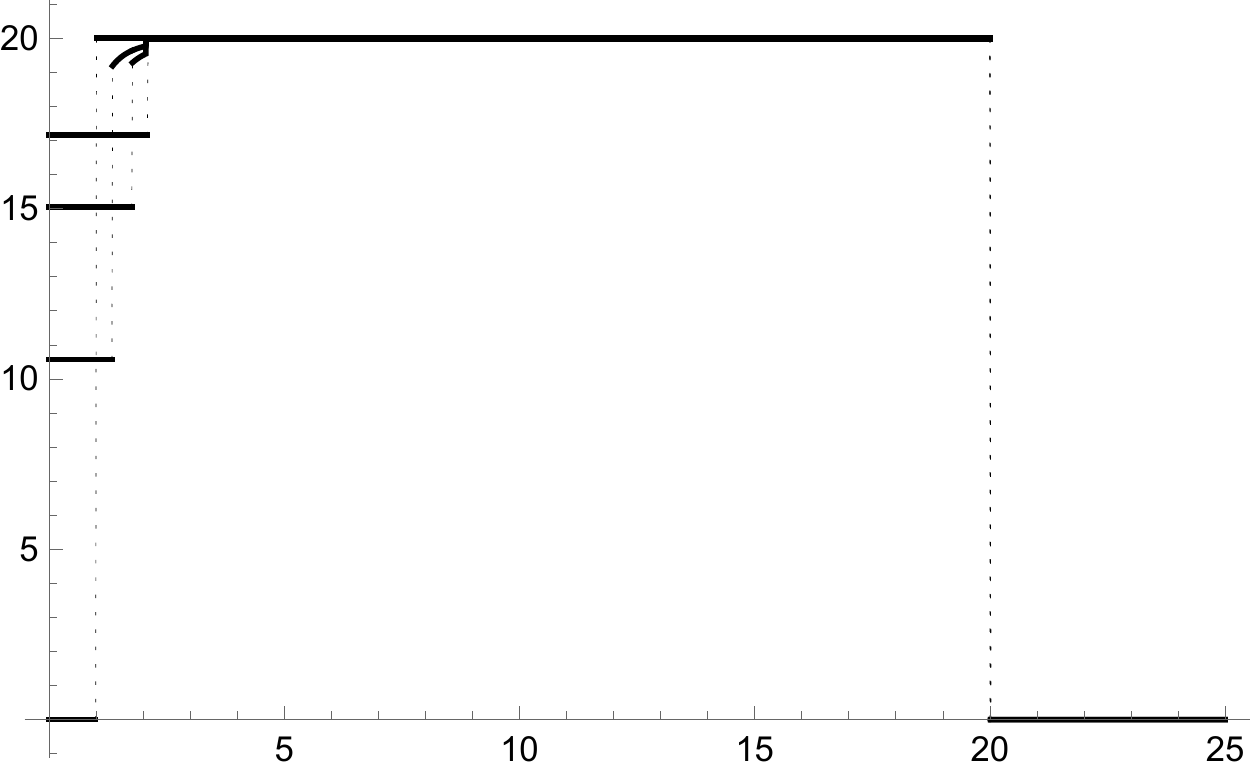}
		\caption{Plot over $[0,25]$.}
	\end{subfigure}
	\hfill
	\begin{subfigure}{0.45\textwidth}
		\includegraphics[width=\textwidth]{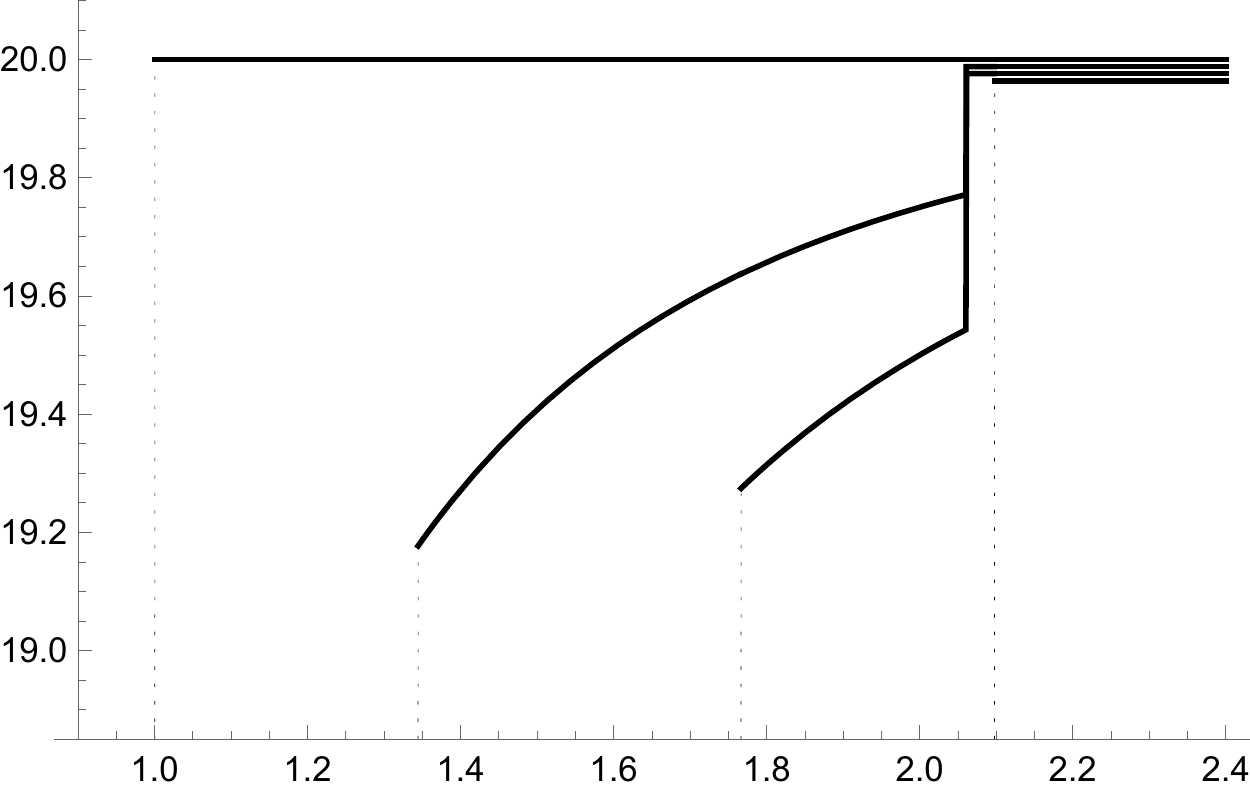}
		\caption{Magnified plot over $[0.9,2.4]$.}
	\end{subfigure}
	\caption{Plots of the solution $u(t,x)$ emanating from the characteristic function of annulus $A_{R^1}^{R^2}$ with $R_0=1$, $R_1=20$ in $n=2$ as a function of $|x|$ for $t=0$, $t=2$, $t=4$, $t=6$.}
	\label{plot_annulus2_bending}
	
	\vspace{40pt} 
	
	\begin{centering}
		\includegraphics[width=0.45\textwidth]{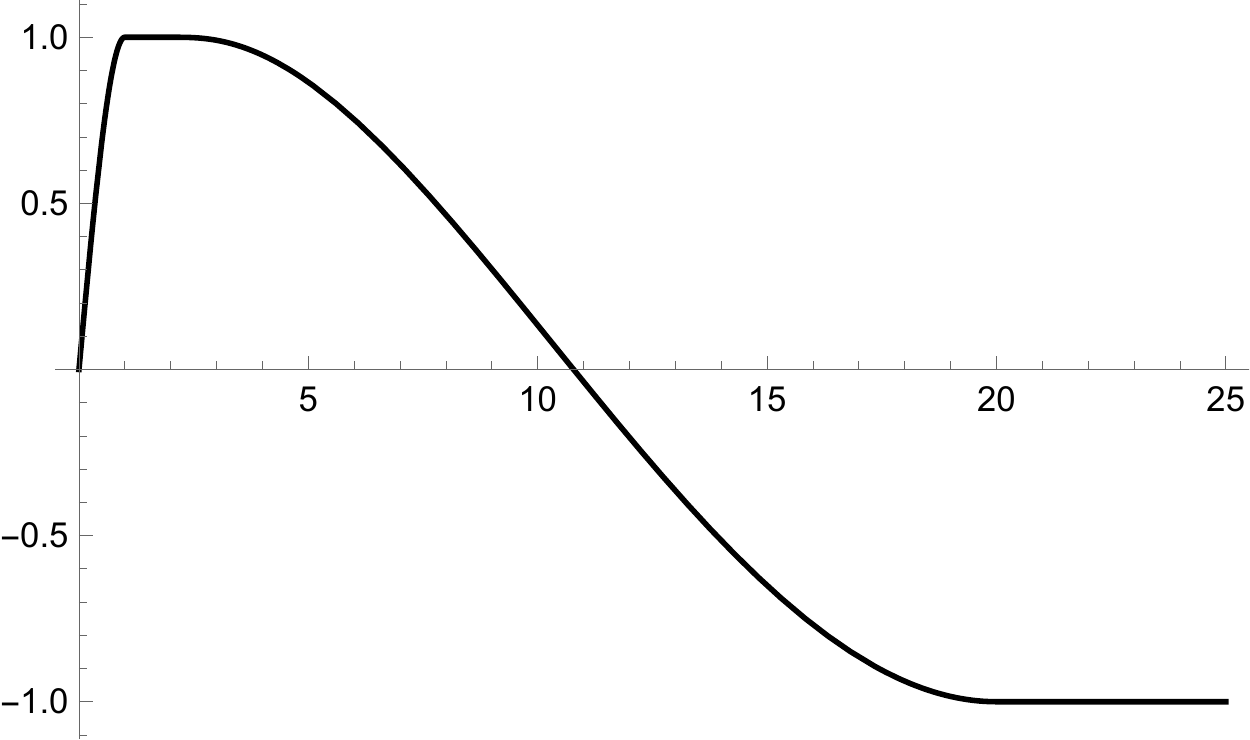}

	\caption{Plot of the Cahn-Hoffman vector field $Z(t,x)$ for the characteristic function of annulus $A_{R^1}^{R^2}$ with $R_0=1$, $R_1=20$ in $n=2$ as a function of $|x|$.}
	\label{plot_annulus3_bending}
	\end{centering}
\end{figure}

Next we deal with the remaining case of dimension $n=2$. In this case, our attempt to obtain a radial calibration failed for complements of balls and for some annuli. Again, let $u_0$ be a stack of form \eqref{datum_stack}. For $k=1,\ldots,N$, let $\sigma^k = \mathrm{sgn}\,(a^k_0 - a^{k-1}_0)$. We assume the following ansatz on the solution $u$ and the associated field $Z$ for small $t>0$: 
\begin{equation} \label{stack_ansatz}
	u(t,\cdot)= a^0(t) \text{ on } B_{R^0(t)}, u(t,\cdot) = a^k(t) \text{ on } A_{\max(R^{k-1}(t), R^k(t)/Q_*)}^{R^k(t)} \text{ if } \sigma^{k+1} \neq \sigma^k, \ k=1,\ldots,{N-1},
\end{equation}
\begin{multline} \label{stack_ansatz_Z} 
	Z(t,x)= \sigma^k \tfrac{x}{|x|} \text{ on } A_{R^{k-1}(t)}^{R^k(t)} \text{ if } \sigma^{k+1} \neq \sigma^k \text{ or on } A_{R^{k-1}(t)}^{R^k(t)/Q_*} \text{ if }\sigma^{k+1} = \sigma^k,\ k=1,\ldots,{N-1}, \\ Z(t,x) = \sigma^{k+1} \tfrac{x}{|x|} \text{ on } \mathbb{R}^n \setminus B_{R^{N-1}(t)}.
\end{multline}
We complete the definition of a Cahn-Hoffman field $Z$ consistent with \eqref{stack_ansatz}, \eqref{stack_ansatz_Z} by pasting the calibrations $Z^k$ with suitable choice of signatures into the gaps left in \eqref{stack_ansatz_Z}. This leads to 
\[u_t(t,\cdot) = \lambda^0(t) \text{ in } \mathcal{D}'(B_{R^0(t)}),\]
\begin{align*} 
	u_t(t,x) &= \lambda^k(t) \text{ in } \mathcal{D}'\left(A_{\max(R^{k-1}(t), R^k(t)/Q_*)}^{R_k(t)}\right), \\ u_t(t,x) &= \frac{\sigma^k}{|x|^3} \text{ in } \mathcal{D}'\left(A_{R^{k-1}(t)}^{R^k(t)/Q_*}\right) \text{ if } \sigma^k \neq \sigma^{k+1} \text{ or in } \mathcal{D}'\left(A_{R^{k-1}(t)}^{R^k(t)}\right) \text{ if } \sigma^k = \sigma^{k+1},\ k=1, \ldots, N-1, \\    
	u_t(t,x) &= \frac{\sigma^N}{|x|^3} \text{ in } \mathcal{D}'(\mathbb{R}^2 \setminus B_{R^N(t)}).
\end{align*} 
Moreover, $u_t(t,\cdot) \in M(\mathbb{R}^2)$ and 
\[u_t \,\raisebox{-.127ex}{\reflectbox{\rotatebox[origin=br]{-90}{$\lnot$}}}\,_{S_{R^k}} = \frac{x}{|x|}\cdot ((\nabla \operatorname{div} Z)^- - (\nabla \operatorname{div} Z)^+) \mathcal{H}^{1} \,\raisebox{-.127ex}{\reflectbox{\rotatebox[origin=br]{-90}{$\lnot$}}}\,_{S_{R^k}} =: d^k\]
for $k = 0, \ldots, N-1$, where $(\nabla \operatorname{div} Z)^\pm$ are the one sided limits as $|x| \to (R^k)^\pm$. The values of $d^k$ are functions of $R^0, \ldots, R^{N-1}$. Reasoning as in the case $n \neq 2$, the evolution of $R^k$ is governed by equations
\begin{equation}\label{ode_stack_R2}
	\frac{\mathrm{d} R^k}{\mathrm{d} t} = \frac{d^k}{u(t,x)\big|_{|x|=(R^k)^-} - u(t,x)\big|_{|x|=(R^k)^+}}.
\end{equation} 
The values $u(t,x)\big|_{|x|=(R^k)^+}$ are either prescribed by ODEs 
\begin{equation} \label{ode_stack_a2} 
	\frac{\mathrm{d} a^k}{\mathrm{d} t} = \lambda^k
\end{equation} 
with $\lambda^k$ functions of $R^0, \ldots, R^{N-1}$ in calibrable regions where $u(t,x) = a^k(t)$, or explicitly determined by $u_t(t,x) = \sigma^k /|x|^3$ in bending regions. It is important to note that in the case $\sigma^k \neq \sigma^{k+1}$, $R^{k-1} \leq R^k/Q_*$ the functions $d^k, \lambda^k$ do not depend on $R^{k-1}$. Thus, one can first solve a part of the system \eqref{ode_stack_R2}, \eqref{ode_stack_a2} for the outer annuli, then calculate $u$ in the bending region (without knowing a priori its inner boundary) and move on to solving innermore parts of \eqref{ode_stack_R2}, \eqref{ode_stack_a2}. This way, finding the solution is indeed again reduced to solving a system of ODEs. 

The qualitative behavior resulting from this procedure is rather intricate and may be surprising. To showcase this, we include Figure \ref{plot_annulus2_bending} picturing the evolution emanating from the characteristic function of a thick annulus. 
	
	Let us explain the evolution in a few words. The inner part of the initial facet corresponding to the annulus instantaneously bends downwards. Meanwhile, the outer boundary of the facet expands outwards, at relatively low speed (practically invisible in the picture). Since the ratio of outer to inner radius of the facet is constant, this means that the whole facet slowly moves outwards. The combined effect of this and the bending results in the very steep (but continuous!) part of the graph between the facet and the bending part. 

At the same time, the facet corresponding to the inside ball also expands outwards, gradually consuming the bending part. In the final pictured time instance, the whole bending part has disappeared and the solution is a stack again. 


\section*{Acknowledgement} 
The authors would like to thank the anonymous reviewers for their helpful comments.

\noindent
Y.\ Giga: \\
Graduate School of Mathematical Sciences, The University of Tokyo, 3-8-1 Komaba, Meguro-ku, Tokyo 153-8914, Japan, labgiga@ms.u-tokyo.ac.jp \\
H.\ Kuroda: \\
Department of Mathematics, Hokkaido University, 
Kita 10, Nishi 8, Kita-ku, Sapporo, Hokkaido 060-0810, Japan, kuro@math.sci.hokudai.ac.jp \\
M.\ {\L}asica: \\
Graduate School of Mathematical Sciences, The University of Tokyo, 3-8-1 Komaba, Meguro-ku, Tokyo 153-8914, Japan; \\
Institute of Mathematics, Polish Academy of Sciences, ul.\ \'Sniadeckich 8, 00-656 Warszawa, Poland, mlasica@impan.pl


\begin{thebibliography}{999}
%
\bibitem{ACC}
F.\ Alter, V.\ Caselles and A.\ Chambolle, 
A characterization of convex calibrable sets in $\mathbb{R}^N$,
      \emph{ Math.\ Ann.},
      {\bf 332} (2005), 329{--\hspace*{-2mm}--}366.
%
\bibitem{AGS}
L.\ Ambrosio, N.\ Gigli and G.\ Savar\'e, 
     \emph{Gradient flows in metric spaces and in the space of probability measures}, Second edition, Lectures in Mathematics ETH Z\"urich, Birkh\"auser Verlag, Basel, 2008.
%
\bibitem{ACM}
F.\ Andreu-Vaillo, V.\ Caselles and J.\ M.\ Maz\'on, 
     \emph{Parabolic quasilinear equations minimizing linear growth functionals},  
Progress in Mathematics, 223, 
Birkh\"auser Verlag, Basel, 2004.
	%
	\bibitem{BCN}
	G.\ Bellettini, V.\ Caselles and M.\ Novaga,
	The total variation flow in {$\mathbb{R}^N$},
	\emph{ J.\ Differential Equations}, {\bf 184} (2002),
	475{--\hspace*{-2mm}--}525.
%
\bibitem{BNP01c}
G.\ Bellettini, M.\ Novaga and M.\ Paolini, 
Characterization of facet breaking for nonsmooth mean curvature flow in the convex case,
      \emph{ Interfaces Free Bound.},
      {\bf 3} (2001), 415{--\hspace*{-2mm}--}446. 
%
\bibitem{Br}
H.\ Br\'ezis, 
      \emph{Op\'erateurs maximaux monotones et semi-groupes de contractions dans les espaces de Hilbert}, 
North-Holland Mathematics Studies, No.\,5, Notas de Matem\'atica, No.\,50, 
North-Holland Publishing Co., Amsterdam-London; American Elsevier Publishing Co., Inc., New York, 1973.
%
\bibitem{CRCT}
W.\ C.\ Carter, A.\ R.\ Roosen, J.\ W.\ Cahn and J.\ E.\ Taylor, 
Shape evolution by surface diffusion and surface attachment limited kinetics on completely faceted surfaces,
      \emph{ Acta Metall.\ Mater.},
      {\bf 43} (1995), 4309{--\hspace*{-2mm}--}4323.
%
\bibitem{EIS}
C.\ M.\ Elliott and S.\ A.\ Smitheman, 
Analysis of the TV regularization and $H^{-1}$ fidelity model for decomposing an image into cartoon plus texture,
      \emph{ Commun.\ Pure Appl.\ Anal.},
      {\bf 6} (2007), 917{--\hspace*{-2mm}--}936.
%
	\bibitem{EG} 
	L.\ C.\ Evans and R.\ F.\ Gariepy, 
	\emph{Measure theory and fine properties of functions.} Studies in Advanced Mathematics. CRC Press, Boca Raton, FL, 1992.
%
	\bibitem{Ful}
	W.\ Fulton, 
\emph{Algebraic topology. A first course},  
	Graduate Texts in Mathematics, 153, 
	Springer-Verlag, New York, 1995.
%
\bibitem{Gal}
G.\ P.\ Galdi, 
\emph{An introduction to the mathematical theory of the Navier-Stokes equations. Steady-state problems}, 
Second edition, Springer Monographs in Mathematics, 
Springer, New York, 2011.
%
\bibitem{GG}
M.-H.\ Giga and Y.\ Giga, 
Very singular diffusion equations: second and fourth order problems,
      \emph{ Japanese J.\ Ind.\ Appl.\ Math.},
      {\bf 27} (2010), 323{--\hspace*{-2mm}--}345.
%
\bibitem{GG2}
M.-H.\ Giga and Y.\ Giga, 
Crystalline surface diffusion flow for graph-like curves, 
      \emph{ Discrete Contin.\ Dyn.\ Syst.}, 
      {\bf 43} (2023), 1436{--\hspace*{-2mm}--}1468.
%
\bibitem{GK}
Y.\ Giga and R.\ V.\ Kohn, 
Scale-invariant extinction time estimates for some singular diffusion equations,
      \emph{ Discrete Contin.\ Dyn.\ Syst.},
      {\bf 30} (2011), 509{--\hspace*{-2mm}--}535.
%
\bibitem{GKM}
Y.\ Giga, H.\ Kuroda and H.\ Matsuoka, 
Fourth-order total variation flow with Dirichlet condition: Characterization of evolution and extinction time estimates,
      \emph{ Adv.\ Math.\ Sci.\ Appl.},
      {\bf 24} (2014), 499{--\hspace*{-2mm}--}534.
%
\bibitem{GMR}
Y.\ Giga, M.\ Muszkieta and P.\ Rybka, 
A duality based approach to the minimizing total variation flow in the space $H^{-s}$,
      \emph{ Jpn.\ J.\ Ind.\ Appl.\ Math.},
      {\bf 36} (2019), 261{--\hspace*{-2mm}--}286.
%
\bibitem{GP}
Y.\ Giga and N.\ Po\v{z}\'ar, 
Motion by crystalline-like mean curvature: a survey, 
      \emph{ Bull. Math. Sci.}, 
      {\bf 12}, (2022), 2230004{--\hspace*{-2mm}--}1{--\hspace*{-2mm}--}68.
%
\bibitem{GU}
Y.\ Giga and Y.\ Ueda, 
Numerical computations of split Bregman method for fourth order total variation flow,
      \emph{ J.\ Comput.\ Phys.},
      {\bf 405} (2020), 109114.
      %
\bibitem{GT}
D.\ Gilbarg and N.\ S.\ Trudinger, 
\emph{Elliptic partial differential equations of second order}, 
Reprint of the 1998 edition, Classics in Mathematics, 
Springer-Verlag, Berlin, 2001.
%
\bibitem{Ka1}
Y.\ Kashima, 
A subdifferential formulation of fourth order singular diffusion equations,
      \emph{ Adv.\ Math.\ Sci.\ Appl.},
      {\bf 14} (2004), 49{--\hspace*{-2mm}--}74.
%
\bibitem{Ka2}
Y.\ Kashima, 
Characterization of subdifferentials of a singular convex functional in Sobolev spaces of order minus one,
      \emph{ J.\ Funct.\ Anal.},
      {\bf 262} (2012), 2833{--\hspace*{-2mm}--}2860.
%
\bibitem{Koh}
R.\ V.\ Kohn, 
Surface relaxation below the roughening temperature: some recent progress and open questions, In: 
      \emph{Nonlinear partial differential equations}, Abel Symp., 7, Springer, Heidelberg, 2012, 207{--\hspace*{-2mm}--}221.
%
\bibitem{KV}
R.\ V.\ Kohn and H.\ M.\ Versieux, 
Numerical analysis of a steepest-descent PDE model for surface relaxation below the roughening temperature,
      \emph{ SIAM J.\ Numer.\ Anal.},
      {\bf 48} (2010), 1781{--\hspace*{-2mm}--}1800.
%
\bibitem{Ko}
Y.\ K\=omura, 
Nonlinear semi-groups in Hilbert space,
      \emph{ J.\ Math.\ Soc.\ Japan},
      {\bf 19} (1967), 493{--\hspace*{-2mm}--}507.
%
	\bibitem{LMM}
	M.\ {\L}asica, S.\ Moll and P.\ B.\ Mucha,
	Total variation denoising in {$l^1$} anisotropy.
      \emph{ SIAM J.\ Imaging Sci.},
      {\bf 10} (2017), 1691{--\hspace*{-2mm}--}1723.
%
\bibitem{L}
G.\ P.\ Leonardi, 
An overview on the Cheeger problem, In: 
      \emph{New trends in shape optimization},
Internat.\ Ser.\ Numer.\ Math., 166, 
Birkh\"auser/Springer, Cham, 2015, 117{--\hspace*{-2mm}--}139.
%
\bibitem{Od}
I.\ V.\ Odisharia, 
Simulation and analysis of the relaxation of a crystalline surface, 
Ph.D.\ thesis, New York University, New York 2006.
%
\bibitem{OSV}
S.\ Osher, A.\ Sol\'e and L.\ Vese, 
Image decomposition and restoration using total variation minimization and the $H^{-1}$ norm,
      \emph{ Multiscale Model.\ Simul.},
      {\bf 1} (2003), 349{--\hspace*{-2mm}--}370.
%
\bibitem{Ru}
W.\ Rudin, 
\emph{Functional analysis}, Second edition, International Series in Pure and Applied Mathematics, 
McGraw-Hill, Inc., New York, 1991.
%
\bibitem{Sch}
L.\ Schwartz, 
\emph{Th\'eorie des distributions}, Nouvelle \'edition, enti\'erement corrig\'ee, refondue et augment\'ee, Publications de l'Institut de Math\'ematique de l'Universit\'e de Strasbourg, I\!X-X Hermann, Paris 1966.
%
\bibitem{Sp}
H.\ Spohn, 
Surface dynamics below the roughening transition,
      \emph{ J.\ Phys.\ I France},
      {\bf 3} (1993), 69{--\hspace*{-2mm}--}81.

\end{thebibliography}
\end{document}